\numberwithin{equation}{section}
\theoremstyle{plain}
\newtheorem*{acknowledgements}{Acknowledgements}
\numberwithin{equation}{section}
\newtheorem{thm}{Theorem}[section]
\newtheorem{prop}[thm]{Proposition}
\newtheorem{lemma}[thm]{Lemma}
\newtheorem{cor}[thm]{Corollary}
\theoremstyle{remark}
\newtheorem{remark}[thm]{Remark}
\newtheorem{example}[thm]{Example}
\theoremstyle{definition}
\newtheorem*{notation}{Notation}
\providecommand{\abs}[1]{\left\lvert#1\right\rvert}
\providecommand{\ip}[1]{\langle#1\rangle}
\providecommand{\iFT}[1]{\mathcal{F}^{-1}\left(#1\right)}
\providecommand{\ccdot}[0]{\!\cdot\!}
\renewcommand{\div}{{\rm div}}
\newcommand{\Z}{\mathbb{Z}}  
\newcommand{\N}{\mathbb{N}}
\newcommand{\R}{\mathbb{R}}
\newcommand{\supp}{\rm supp}
\title{Long Time Stability for Solutions of a \texorpdfstring{$\beta$}{beta}-plane Equation}
\author{\vspace{-.1cm}Tarek M. Elgindi, Klaus Widmayer}
\subjclass[2010]{76B03, 76B15, 76U05, 35B34, 35Q31, 35Q35}
\date{\today}
\address{Department of Mathematics, Princeton University, Princeton, NJ 08544, USA}
\email{tme2@math.princeton.edu}
\address{Courant Institute of Mathematical Sciences, 251 Mercer Street, New York 10012 NY, USA}
\email{klaus@cims.nyu.edu}
\begin{document}
\vspace{-3cm}
\begin{abstract}\vspace{-.2cm}
We prove stability for arbitrarily long times of the zero solution for the so-called $\beta$-plane equation, which describes the motion of a two-dimensional inviscid, ideal fluid under the influence of the Coriolis effect. The Coriolis force introduces a linear dispersive operator into the 2d incompressible Euler equations, thus making this problem amenable to an analysis from the point of view of nonlinear dispersive equations. The dispersive operator, $L_1:=\frac{\partial_1}{\abs{\nabla}^2}$, exhibits good decay, but has numerous unfavorable properties, chief among which are its anisotropy and its behavior at small frequencies. 
\end{abstract}

\maketitle \vspace{-0.9cm}

\tableofcontents

\section{Introduction}

While the study of partial differential equations arising in fluid mechanics has been an active field in the past century, many important and physically relevant questions remain wide open from the point of view of mathematical analysis. Even very fundamental questions such as global-in-time well-posedness remain unresolved. This is particularly true in the study of the long-time behavior of systems without dissipation, such as the Euler equations for incompressible flow,
\begin{align}
 \partial_t u +u\cdot\nabla u+\nabla p&=0 \quad \text{in} \; \R^n,\label{eq:Euler1}\\
 \div(u)&=0 \quad \text{in} \; \R^n,\label{eq:Euler2}\\
 \abs{u(x)}&\rightarrow 0 \;\;\;\text{as}\; \abs{x}\rightarrow \infty.\label{eq:Euler3}
\end{align}

These model the motion of a so-called ideal, incompressible fluid occupying the whole space $\mathbb{R}^n$, where $u$ is the (vector) velocity field of the fluid and $p$ is the (scalar) internal pressure.

The most physically relevant cases to study \eqref{eq:Euler1}-\eqref{eq:Euler3} are dimensions $n=2$ and $n=3$. While it isn't apparent from the system in this form, the mathematical analysis of \eqref{eq:Euler1}-\eqref{eq:Euler3} in two dimensions is very different from the analysis in three dimensions. This is easily seen when we pass to the vorticity formulation: Define $\omega:= \nabla \times u$, the curl of $u$ (in two dimensions $\omega=\partial_x u_2-\partial_{y} u_1$). It turns out that \emph{in two dimensions} $\omega$ satisfies a simple transport equation,
$$\partial_t\omega+u\cdot\nabla\omega=0.$$
On the other hand, the so-called Biot-Savart law allows us to recover $u$ from the (scalar) vorticity $\omega$ using the fact that $\div(u)=0$ as $u=\nabla^\perp (-\Delta)^{-1}\omega$.

The fact that $\omega$ is transported by a velocity field $u$ which is one degree smoother than $\omega$ (by the Biot-Savart law) allows one to prove that in two dimensions \eqref{eq:Euler1}-\eqref{eq:Euler3} is globally well-posed\footnote{This is not known in three dimensions, where $\omega$ is not simply transported.} in Sobolev spaces, $H^s$ for $s>2$. However, a priori the norm of the solutions may grow double exponentially in time. 

\subsection{The \texorpdfstring{$\beta$}{beta}-plane model}
One mechanism which helps to further stabilize the motion of an ideal fluid is the so-called \emph{Coriolis effect}, which arises when the fluid is described in a rotating frame of reference, in which it is seen to experience an additional force (called the Coriolis force). On a rotating sphere the magnitude of this effect varies with the latitude and is described by the Coriolis parameter $f$. A common approximation in oceanography and geophysical fluid dynamics (see \cite{GFD}, for example) is to assume that this dependency is linear. In the appropriate coordinate system the Coriolis parameter is then given as $f=f_0+\beta y$, where the variable $y$ corresponds to the latitude and $\beta$ is a parameter (from which the model gets its name). In two space dimensions, this is the setting of the current paper (see also \cite{AIP_1.3141499}). 

More explicitly, we are interested in studying the long-time behavior of solutions to the so-called \emph{$\beta$-plane equation}
\begin{equation}\label{eq:b-plane}
 \partial_t \omega +u\ccdot\nabla\omega=\beta\frac{R_1}{\abs{\nabla}}\omega,
\end{equation}
where $\omega:\R\times\R^2\rightarrow\R$, $u=\nabla^\perp(-\Delta)^{-1}\omega,$ $R_1$ denotes the Riesz transform in the first space coordinate, and $\beta$ is a real number determined by the strength of the Coriolis force. For simplicity, we shall take $\beta=1$. Henceforth we shall abbreviate the operator on the right hand side by 
$$L_1\omega:=\frac{R_1}{\abs{\nabla}}\omega.$$

As described above, this system models the flow of a two-dimensional inviscid fluid under the influence of the Coriolis effect -- we have written it here in the vorticity formulation. The asymmetry between the horizontal and vertical motion can be seen clearly in the linear term $L_1\omega$ on the right hand side of the equation. In more technical terms, this breaks the isotropy of the original Euler equation and is a core feature and difficulty of the model.

From a purely hyperbolic systems point of view, i.e.\ using only $L^2$ based energy estimates, one can show that solutions remain small in $H^k$ energy norms for a time span whose length is inversely proportional to the size of the initial data. There the operator $L_1$ does not play any role since $\ip{L_1\omega,\omega}_{L^2}=0$  and since $L_1$ commutes with derivatives. However, one can show that such energy norms are controlled by the $L^\infty$ norm. Since the linear part of the equation exhibits dispersive decay in $L^\infty$, this controllability of higher order energy norms by the $L^\infty$ norm of $\omega$ hints at a first possible improvement through dispersive decay. Indeed, as we already observed in our previous work on the related SQG equation \cite[Remark 3.3]{SQGpaper}, the smallness of solutions can be guaranteed for a longer\footnote{More precisely, for data of size $\epsilon$ smallness is guaranteed to hold up to a time of order $\abs{\log(\epsilon)}^{-1}\epsilon^{-2}$.} period of time. 

\subsubsection*{The Dispersive Approach}
In the present article we will improve this result and demonstrate smallness and decay of solutions to the $\beta$-plane equation for arbitrarily long times. More precisely, we will show (see Theorem \ref{thm:main}) that for any $M>0$ there exists an $\epsilon>0$ such that if the initial data are smaller than $\epsilon$ (in suitable spaces), then the corresponding solution to the $\beta$-plane equation \emph{stays small and decays} until at least time $t\sim\epsilon^{-M}$.

We approach the question of well-posedness of the $\beta$-plane system for small initial data from a perturbative point of view, i.e.\ we view \eqref{eq:b-plane} as a nonlinear perturbation of the linear equation 
\begin{equation}\label{eq:lin_eq}
 \partial_t \omega =L_1\omega.
\end{equation}
Due to the properties of the operator $L_1$ this puts us in the realm of nonlinear dispersive differential equations: solutions to \eqref{eq:lin_eq} decay in $L^\infty$ at a rate of $t^{-1}$. This rate of decay is not integrable, so obtaining good control of solutions to the nonlinear equation \eqref{eq:b-plane} for long periods of time is a challenge. However, it is the most decay one may hope for in a 2d model, so despite being anisotropic this model is not further degenerate in that sense (unlike the dispersive SQG equation, which has a lower rate of decay of $t^{-\frac{1}{2}}$ -- see \cite{SQGpaper}). 

However, for small frequencies $L_1$ is quite singular: a bound on third order derivatives in an $L^1$ based Besov space is required for the decay given by the linear propagator. For the full, nonlinear $\beta$-plane equation one is then led to the study of weighted norms with derivatives in order to guarantee the decay.

Further difficulties arise due to the lack of symmetries of this equation. In general, symmetries are a convenient way to obtain weighted estimates by commuting the associated generating vector fields with the equation. As noted above though, \eqref{eq:b-plane} does not exhibit a rotational symmetry. Only a scaling symmetry holds, which merely gives control\footnote{For more details we refer the reader to Remark \ref{rem:symm} below.} over a radial weight in $L^2$. This, however, is not enough to control the decay of solutions. Given the form of $L_1$, proving weighted estimates on the equation is therefore the key difficulty tackled in this article.

This way we are lead to a detailed analysis of the resonance structure of the equation (in the spirit of the works of Germain, Masmoudi and Shatah -- see for example \cite{MR2914945}). Here it is crucial to note that the nonlinearity $u\cdot\nabla\omega$ exhibits a null structure that cancels waves of parallel frequencies. However, the anisotropy of $L_1$ makes a precise study of the geometric structure of the resonant sets difficult or even infeasible.

It is also worth noting that, compared to many other nonlinear dispersive equations, this problem is highly quasi-linear in the sense that the nonlinearity has two orders of derivatives more than the linear operator $L_1$.

\subsubsection*{Related Works on Rotating Fluids}
The $\beta$-plane approximation has been used in numerous investigations into various types of waves in rotating fluids in two or three spatial dimensions. For the most part the theoretical efforts have been focused on \emph{viscous} scenarios, such as the rotating Navier-Stokes equations. The dispersion relation there is anisotropic, but less singular\footnote{In three dimensions it is given by $\frac{\xi_3}{\abs{\xi}}$. However, notice that even in 3d this only gives a dispersive decay rate of $t^{-1}$ -- see e.g.\ \cite{KohNS}.} than the one we study. The viscosity introduces a strong regularizing effect and makes the equations semilinear. In particular, Strichartz estimates can then be used to construct global solutions once the dispersion is sufficiently strong, and  to study limiting systems as the dispersion tends to infinity. For the rotating Navier-Stokes equations this was done by Chemin et al.\ (\cite{CDGG1999}, \cite{CDGG2002}, \cite{CDGG2006}) and Koh et al.\ (\cite{KohNS}).

Another important achievement in the study of both the rotating Euler equations and Navier-Stokes equations (in 3d) is the series of works of Babin-Mahalov-Nicolaenco (\cite{BMN1996}, \cite{BMN1997}, \cite{BMN1999}, \cite{BMN2000}), which focused on studying the case of fast rotation. They proved that as the speed of rotation increases (depending upon the initial data), the solution of the rotating Navier-Stokes equations becomes globally well-posed and the solution of the Euler equations exists for a longer and longer time. This was shown in many different settings, including various spatial domains such as three dimensional tori (\cite{BMN2000}) with non-resonant (\cite{BMN1996}, \cite{BMN1997}) and resonant dimensions (\cite{BMN1999}). Apart from the more singular dispersion relation, another key difference between the works of Babin-Mahalov-Nicolaenco and the present article is that we are able to prove almost global existence in the setting of the whole space \emph{without} viscosity, while their method does not give such a long time of existence and, more importantly, global existence without viscosity seems out of reach using those types of methods. In contrast, our argument does not apply to the periodic case.

\subsubsection*{Related Works in Dispersive Equations}
In recent years there has been a surge of interest in the question of dispersive effects in fluid equations. However, just as is the case for the classical dispersive equations such as the (Fractional) Nonlinear Schr\"odinger Equation, in many examples (e.g.\ \cite{CapWW}, \cite{G2dWW}) the dispersion relation is isotropic. This greatly helps with the resonance analysis and allows for an explicit understanding. In addition, rotational symmetry and scaling symmetries provide conservation laws for the solutions in weighted spaces.

An example of an anisotropic equation that bears closer similarity with the $\beta$-plane model is the KP-I equation, recently studied by Harrop-Griffiths, Ifrim and Tataru \cite{KP-I}. At the linear level, this 2d model produces the same decay rate as the $\beta$-plane equation and does not allow for a direct resonance analysis due to the anisotropy. However, in contrast to the $\beta$-plane equation, the KP-I equation does have enough symmetries to provide control of weighted norms and the main difficulty is to obtain a global decaying solution. Using the method of testing by wave packets, the authors were able to obtain global solutions for small, localized initial data.

\subsection{Plan of the Article}\label{sec:plan}
We start with the presentation of our main result, Theorem \ref{thm:main} in Section \ref{sec:main}. This is followed by a short discussion that places it in the relevant context. The rest of the article is then devoted to its proof.

We begin (Section \ref{sec:bootstrap}) by explaining the general bootstrapping scheme used to prove Theorem \ref{thm:main}. It proceeds through a combination of bounds on the energy, the dispersion and control of some weighted norms. The individual estimates employed here are the subject of the remaining sections.

Firstly, an energy inequality is established in Section \ref{sec:en_ineq}. This is a classical blow-up criterion, which shows that the growth of the energy norms is bounded by the exponential of the time integral of the $L^\infty$ norms of the velocity and vorticity.

We go on to discuss the dispersive decay given by the linear semigroup of the equation in Section \ref{sec:disp_est}. We explain the general dispersive result (with a proof in the appendix) and illustrate how we will make use of a more particular version, tailored to our bootstrapping argument in Section \ref{sec:bootstrap}.

The core of the article then is the proof of the weighted estimates in Sections \ref{sec:weights}, \ref{sec:weights_red} and \ref{sec:res}. We commence by explaining in Section \ref{sec:weights} the basics of our approach and give some insight into the difficulties we encounter. This is followed by an explanation of our strategy to prove the main weighted estimates (Propositions \ref{prop:added_derivative}, \ref{prop:highfreq} and \ref{prop:lowfreq}). 

In Section \ref{sec:weights_red} we embark on the proof with a few observations, which allow us to reduce to one main type of term, which is treated in Section \ref{sec:res}. There we illustrate first the various techniques that come in handy and then go on to apply them in the later parts of this section. In particular, the main part of Section \ref{sec:res} is a detailed resonance analysis and relies on the observations and ideas laid out beforehand.

In the appendix we gather the proofs of the full dispersive decay, of a corollary to our main result and of the global well-posedness for large data, as well as some computations that turn out to be useful.

\subsection{Notation}
For $j\in\Z$ we let $P_j$ be the Littlewood-Paley projectors associated to a smooth, radial bump function $\varphi: \R^2\to\R$ with support in the shell $\frac{1}{2}\leq \abs{\xi}\leq 2$ and its rescalings $\varphi_j(\xi):=\varphi(2^{-j}\xi)$, so that one has $\widehat{P_j g}(\xi)=\varphi(2^{-j}\xi)\hat{g}(\xi)$. Derived from these by summation are the operators $P_{\leq N}$, $P_{>N}$ etc. For example, $P_{>N}:=\sum_{2^j>N}P_j$ gives the projection onto frequencies larger than $N>0$ in Fourier space (and can alternatively be written using a smooth Fourier multiplier $\varphi_{>N}(\xi)$ with value 1 on frequencies larger than $N$ and supported on frequencies larger than $N/2$).

We use standard notation for derivatives, but make use of the slight abbreviation to denote by $\partial_\xi$ a scalar derivative in $\xi$ in any direction. Moreover, we let $D:=\sqrt{-\Delta}$.

For a set $A$ we denote by $\chi_A$ its characteristic function.

Whenever a parameter is carried through inequalities explicitly, we assume that constants implicit in the corresponding $\lesssim$ are independent of it. 

\section{Main Result}\label{sec:main}
The main result of this article is the following
\begin{thm}\label{thm:main}
 For any $M>0$ there exist $\epsilon_0>0$ and $k_0>0$ such that for all $\epsilon\leq\epsilon_0$ and $k\geq k_0$, if $\omega_0$ satisfies
\begin{equation*}
 \abs{D^2 x\omega_0}_{L^2},\abs{D^3 x\omega_0}_{L^2},\abs{\omega_0}_{H^k}\leq\epsilon,
\end{equation*}
then there exist $T\gtrsim\epsilon^{-M}$ and a unique solution $\omega(t)\in C([0,T];H^k)$ to the initial value problem
\begin{equation}\label{eq:IVP}
   \begin{cases}
   &\partial_t\omega+u\cdot\nabla\omega=L_1\omega,\\
   &u=\nabla^\perp(-\Delta)^{-1}\omega,\\
   &\omega(0)=\omega_0.
  \end{cases}
\end{equation}

Moreover, for $t\in [0,T]$ the solution decays in the sense that
\begin{equation}\label{eq:thmdecay}
 \abs{\omega(t)}_{L^\infty}+\abs{u(t)}_{L^\infty}+\abs{Du(t)}_{L^\infty} \lesssim \frac{\epsilon^{\frac{1}{4}}}{\ip{t}}
\end{equation}
and the energy as well as a weight with two or three derivatives on the profile $f(t)=e^{-L_1t}\omega(t)$ remain bounded:
\begin{equation}\label{eq:thmbounds}
 \abs{\omega(t)}_{H^k},\abs{D^2 xf(t)}_{L^2},\abs{D^3 xf(t)}_{L^2}\lesssim \epsilon^{\frac{1}{2}}.
\end{equation}
\end{thm}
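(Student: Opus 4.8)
The plan is to prove Theorem \ref{thm:main} by the continuity/bootstrap scheme announced in Section \ref{sec:bootstrap}, built on three mutually reinforcing families of estimates: the energy inequality of Section \ref{sec:en_ineq}, the dispersive decay of the linear semigroup from Section \ref{sec:disp_est}, and the weighted estimates of Sections \ref{sec:weights_red}--\ref{sec:res}. After fixing $M>0$, I would first run a standard local well-posedness theory in $H^k$ to produce a maximal solution, and then assume that on some interval $[0,T]$ with $T\le\epsilon^{-M}$ the a priori bounds
\begin{equation*}
 \abs{\omega(t)}_{L^\infty}+\abs{u(t)}_{L^\infty}+\abs{Du(t)}_{L^\infty}\le\frac{A\epsilon^{\frac14}}{\ip{t}},\qquad \abs{\omega(t)}_{H^k}+\abs{D^2 xf(t)}_{L^2}+\abs{D^3 xf(t)}_{L^2}\le A\epsilon^{\frac12}
\end{equation*}
hold, with $f(t)=e^{-L_1 t}\omega(t)$ the profile and $A$ a fixed constant. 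The goal is to improve $A$ to $A/2$ for $\epsilon$ small enough (depending on $M$), so that the set of admissible times is open, closed and nonempty, hence all of $[0,\epsilon^{-M}]$.

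The energy bound is the easiest to close: the blow-up criterion of Section \ref{sec:en_ineq} controls $\frac{d}{dt}\abs{\omega}_{H^k}$ by a constant times $\big(\abs{u}_{L^\infty}+\abs{Du}_{L^\infty}+\abs{\omega}_{L^\infty}\big)\abs{\omega}_{H^k}$, so Gr\"onwall gives
\begin{equation*}
 \abs{\omega(t)}_{H^k}\lesssim\epsilon\,\exp\Big(C\int_0^t\big(\abs{u}_{L^\infty}+\abs{Du}_{L^\infty}+\abs{\omega}_{L^\infty}\big)\,ds\Big),
\end{equation*}
and the bootstrap decay hypothesis makes the exponent $\lesssim A\epsilon^{1/4}\log\ip{T}\lesssim A\epsilon^{1/4}M\abs{\log\epsilon}\ll 1$. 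Hence $\abs{\omega(t)}_{H^k}\lesssim\epsilon\ll\tfrac12 A\epsilon^{1/2}$, with ample room. The dispersive step is then essentially a black box once the weighted norms are available: the estimate of Section \ref{sec:disp_est} bounds $\abs{\omega(t)}_{L^\infty}$ (and likewise $\abs{u(t)}_{L^\infty}$, $\abs{Du(t)}_{L^\infty}$, the latter two being, up to Riesz transforms and a harmless gain, of the same or lower order than $\omega$) by $\ip{t}^{-1}$ times a product of the weighted norms $\abs{D^2 xf}_{L^2}$, $\abs{D^3 xf}_{L^2}$ and interpolated fractions of $\abs{\omega}_{H^k}$; feeding in $\abs{D^j xf}_{L^2}\lesssim A\epsilon^{1/2}$ and $\abs{\omega}_{H^k}\lesssim\epsilon$ yields a bound of order $\ip{t}^{-1}\epsilon^{1/2}$ (up to arbitrarily small powers of $\epsilon$), comfortably better than $\tfrac12 A\epsilon^{1/4}\ip{t}^{-1}$.

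The heart of the matter, and the main obstacle, is closing the weighted bounds on $\abs{D^2 xf(t)}_{L^2}$ and $\abs{D^3 xf(t)}_{L^2}$, i.e.\ Propositions \ref{prop:added_derivative}, \ref{prop:highfreq} and \ref{prop:lowfreq}. From Duhamel, $f(t)=\omega_0-\int_0^t e^{-L_1 s}(u\cdot\nabla\omega)(s)\,ds$, so on the Fourier side
\begin{equation*}
 \widehat{f}(t,\xi)=\widehat{\omega_0}(\xi)-\int_0^t\int_{\R^2}e^{-is\Phi(\xi,\eta)}\,m(\xi,\eta)\,\widehat{f}(s,\eta)\,\widehat{f}(s,\xi-\eta)\,d\eta\,ds,
\end{equation*}
with resonance phase $\Phi(\xi,\eta)=p(\xi)-p(\eta)-p(\xi-\eta)$ (up to an irrelevant sign), $p$ the Fourier symbol of $L_1$, and a bilinear symbol $m$ that carries two extra derivatives (the quasilinear feature noted above) but enjoys a null structure: $m$ vanishes when the frequencies $\eta$ and $\xi-\eta$ (equivalently all of $\xi,\eta,\xi-\eta$) are parallel, so that interactions of parallel waves cancel. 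Applying $\abs{\xi}^k\nabla_\xi$ and taking $L^2_\xi$, the dangerous contributions are those in which $\nabla_\xi$ falls on the oscillation $e^{-is\Phi}$, producing a factor $-is\nabla_\xi\Phi(\xi,\eta)$ that grows linearly in $s$; handled naively this costs a positive power of $T\sim\epsilon^{-M}$. In Section \ref{sec:weights_red} I would first perform algebraic reductions — the product rule, distributing derivatives among symbol, profiles and phase, and low/high frequency splittings — to isolate a single model term carrying this $s\nabla_\xi\Phi$ factor.

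Then, in Section \ref{sec:res}, I would run a space--time resonance analysis on this model term. On the region where $\abs{\Phi}$ is bounded below relative to the frequencies, I would integrate by parts in $s$ (a normal form), trading the growing factor $s$ for $s/\Phi$ plus boundary and commutator terms, and then control the resulting expressions using the structure of $\nabla_\xi\Phi$ relative to $\Phi$; on the near-resonant region $\{\abs{\Phi}\ll 1\}$, where neither integration by parts in $s$ nor in $\eta$ is available, I would exploit the null structure of $m$ (and the fact that it, together with favorable combinations involving $\nabla_\xi\Phi$, is small there) along with quantitative frequency-localized size and measure estimates for $\Phi$, since the anisotropy of $L_1$ forbids solving for the resonant sets explicitly. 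Throughout, the singular low-frequency behavior of $L_1$ and $p$ must be tracked carefully — this is precisely why two and three derivatives on the weighted profile are the natural quantities — and derivative losses coming from $m$ must be balanced against gains from the normal form and from the high regularity of $\abs{\omega}_{H^k}$ (taking $k=k_0$ large). I expect the single hardest estimate to be exactly this model term near the resonant set, where the combination of null structure and a quantitative understanding of the anisotropic phase is indispensable.
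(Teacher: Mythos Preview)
Your bootstrap architecture is correct and matches the paper, but your sketch of the resonance analysis (the heart of the proof) is organized quite differently from what the paper actually does, and you understate two technical points that drive the choice of $k_0$.

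On the resonance analysis: you propose to split according to whether $\abs{\Phi}$ is bounded below (integrate by parts in $s$, i.e.\ normal form) versus near-resonant (null structure plus measure estimates). The paper proceeds almost oppositely. Its primary tool is integration by parts in $\eta$ (space resonances, via $\nabla_\eta\Phi$), organized around a purely geometric decomposition into three regions according to the relative sizes of $\abs{\xi}$, $\abs{\eta}$, $\abs{\xi-\eta}$ (after first reducing to $\abs{\eta}\lesssim\abs{\xi-\eta}$ by symmetrizing). Integration by parts in $s$ is used only in one subcase of Region~1 (all frequencies comparable, near the space resonance $\xi=2\eta$, and $\abs{\xi_1}\gtrsim\abs{\eta_1}$), where a concrete algebraic lower bound for $\Phi$ is proved by hand; in the complementary subcase the paper discovers a single good direction $\eta_2$ in which to integrate by parts despite proximity to $\xi=2\eta$. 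No measure estimates on the resonant set are used. The paper explicitly remarks that the time resonances cannot be computed, so a decomposition led by the size of $\Phi$ is likely harder to carry out than you suggest. Region~3 ($\abs{\eta}\ll\abs{\xi}$) requires a non--Coifman--Meyer bilinear multiplier theorem and is the reason one must first close with two derivatives on the weight before passing to three.

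On the glossed-over points: first, the constant in the $H^k$ energy inequality is $c_1 2^{2k}$, and the paper must take $k=\lambda M$; this exponential dependence is exactly why the argument gives polynomial and not exponential lifespan, so there is not ``ample room.'' Second, one weight in $L^2$ does not control $L^1$, so the dispersive estimate cannot be used as a black box giving $\ip{t}^{-1}$; the paper accepts a loss $t^{-1+2/p}$ via an $L^p$ interpolation with a small parameter $\mu$, and balances it against a time-dependent high/low cutoff $N=2s^{1/k}$. These two mechanisms together are what force $k_0$ to depend on $M$.
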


\begin{remark}[Decay of Riesz transforms of $u$ and $\omega$]
 Note that $Du$ and $\omega$ only differ by a Riesz transform - since we obtain decay of $u$ and $\omega$ through $L^2$ based bounds on a weight on the profile, the proof of the theorem actually shows that any Riesz transforms of $u$ and $\omega$ decay at the same rate, respectively -- see also Remark \ref{rem:u_Du_decay}. The only reason we include both $Du$ and $\omega$ here is that they come up explicitly in the estimates.
\end{remark}

The question of stability of the zero solution was touched upon briefly in our previous work \cite{SQGpaper}, where we showed that smallness of $\omega$ in $H^k$ for $k>5$ is propagated for at least a time $T\sim\epsilon^{-2}\abs{\log(\epsilon)}^{-1}$. However, this result does not make use of any finer structure of the equation than the blow-up criterion for the energy and does not give decay of the $L^\infty$ norm.

We point out that even for large data global solutions to the $\beta$-plane equation \eqref{eq:b-plane} can be constructed for $\omega$ in $H^k$, $k>1$ -- see Theorem \ref{thm:GWP}. However, a priori, the $H^k$ norms of the solution could grow faster than double exponentially in time, which is completely different from the situation we have in the small data case. The proof of this result proceeds in analogy with that for the 2d Euler equation. We include it in Appendix \ref{sec:GWP} for ease of reference.

\begin{proof}[About the Proof of Theorem \ref{thm:main}]
 The local well-posedness of \eqref{eq:IVP} for $k>1$ is well-known. It can be seen by invoking the Sobolev embedding $H^k\hookrightarrow L^\infty$ for the exponential in the energy estimate \eqref{eq:energy_ineq}, for example. Thus the crucial part of the theorem is to obtain decay and existence on a ``long'' time interval $[0,T]$.
 
 For this we employ a \emph{bootstrapping argument}, in which a decay assumption (similar to, but weaker than \eqref{eq:thmdecay}) for some time interval gives the bounds \eqref{eq:thmbounds} on that time interval. This in turn will yield the stronger decay \eqref{eq:thmdecay}, which allows us to prolong the solution and then repeat the argument by the continuity of $\omega$, as long as $t\leq T$.
 
 We make use of Fourier space methods to understand the detailed structure of the nonlinearity through a resonance analysis, the null structure being a crucial element. The present rate of dispersive decay, $t^{-1}$, is not time integrable, which prevents us from proving estimates on a global time scale. More precisely, for the present quadratic nonlinearity, the bilinear estimates of type $L^2\times L^\infty\rightarrow L^2$ have a logarithmic growth when integrated in time. Together with a loss of derivatives in $L^\infty$ in our weighted estimates, this forces us to use a lot of derivatives for the energy estimates. However, the constants involved therein grow exponentially with the amount of derivatives, so that we are not able to close our estimates on an exponentially long time interval.

 As outlined above in \ref{sec:plan}, the proof is spread out over the rest of this article: We begin by demonstrating how a bootstrapping scheme combines our estimates for the energy, dispersion and weights to yield Theorem \ref{thm:main}. Subsequently, these ingredients are proved: Section \ref{sec:en_ineq} establishes energy estimates, after which we demonstrate in Section \ref{sec:disp_est} how to obtain and use the dispersive estimate for the linear semigroup $e^{L_1t}$. Finally, Sections \ref{sec:weights}, \ref{sec:weights_red} and \ref{sec:res} furnish the proof of the weighted estimates.
\end{proof}

\begin{remark}[Symmetries]\label{rem:symm}
 As mentioned in the introduction, apart from time translation, the only continuous symmetry the $\beta$-plane equation in \eqref{eq:IVP} exhibits is a scaling symmetry. The former gives the conservation of $\omega$ and $u$ in $L^2$,
$$\abs{\omega(t)}_{L^2}=\abs{\omega(0)}_{L^2},\quad \abs{u(t)}_{L^2}=\abs{u(0)}_{L^2}.$$
 As for the latter, one can see that if $\omega(t,x)$ solves \eqref{eq:b-plane}, then so does $\omega_\lambda(t,x):=\lambda^{-1} \omega(\lambda^{-1} t, \lambda x)$ for any $\lambda\neq 0$. Differentiating with respect to $\lambda$ at $\lambda=1$ gives the generating vector field $S$ of this symmetry as $S:=-1-t\partial_t+x\ccdot\nabla$, and we have $$\partial_t S\omega+u\ccdot\nabla S\omega+Su\ccdot\nabla\omega=L_1(S\omega).$$
 From this we obtain the $L^2$ conservation of $S\omega$: $\abs{S\omega(t)}_{L^2}=\abs{S\omega(0)}_{L^2}$. Moreover, through the commutation of $S$ with the linear semigroup $e^{L_1t}$ and the equation one can so obtain a bound on $\abs{x\ccdot\nabla f(t)}_{L^2}$ (under the appropriate decay assumptions).
\end{remark}

As an aside we note that from the proof of Theorem \ref{thm:main} it also follows that the Fourier transform of the profile also stays bounded in $L^\infty$ (with two derivatives):
\begin{cor}\label{cor:Linfty}
 In the setting of Theorem \ref{thm:main}, if one also has $\abs{\abs{\xi}^2\hat{\omega}_0(\xi)}_{L^\infty}\leq\epsilon$, then
  \begin{equation*}
   \abs{\abs{\xi}^2\hat{f}(t,\xi)}_{L^\infty}\lesssim\epsilon^{\frac{1}{2}}.
  \end{equation*}
\end{cor}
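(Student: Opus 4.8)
The plan is to obtain the $L^\infty$ bound on $\abs{\xi}^2\hat f(t,\xi)$ as a byproduct of the same weighted estimates that control $\abs{D^2 xf}_{L^2}$ and $\abs{D^3 xf}_{L^2}$ in Theorem \ref{thm:main}. The starting observation is that the Fourier transform converts the physical-space weight $x$ into a frequency derivative $\partial_\xi$, so that $\abs{D^2 xf}_{L^2}$ and $\abs{D^3 xf}_{L^2}$ correspond (up to constants) to $\abs{\abs{\xi}^2\partial_\xi\hat f}_{L^2}$ and $\abs{\abs{\xi}^3\partial_\xi\hat f}_{L^2}$. Combining these two $L^2$ bounds via Cauchy--Schwarz on dyadic shells gives control of $\abs{\abs{\xi}^{5/2}\partial_\xi\hat f}_{L^1_\xi}$, and hence, since $\abs{\xi}^2\hat f(\xi)$ vanishes suitably and can be written as an integral of its radial derivative, one recovers an $L^\infty$ bound on $\abs{\xi}^2\hat f$ from the bounds \eqref{eq:thmbounds} \emph{once} the initial data contributes an $L^\infty$ bound at time zero.

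First I would set $g(t,\xi):=\abs{\xi}^2\hat f(t,\xi)$ and derive its evolution equation by applying $\abs{\xi}^2\mathcal F$ to the Duhamel formula for $f$, i.e. $\hat f(t,\xi)=\hat\omega_0(\xi)+\int_0^t e^{-is\abs{\xi}^{-2}\xi_1}\widehat{(u\cdot\nabla\omega)}(s,\xi)\,ds$ (with the appropriate phase for $e^{-L_1 t}$). The bilinear term $\widehat{u\cdot\nabla\omega}$ is a convolution in frequency, and the point is that $\abs{\xi}^2$ can be distributed over the two inputs using the triangle inequality $\abs{\xi}\le\abs{\eta}+\abs{\xi-\eta}$, so that $g(t)$ is controlled in $L^\infty_\xi$ by a time integral of bilinear expressions in the already-controlled quantities: the energy norm $\abs{\omega}_{H^k}$, the dispersive norms in \eqref{eq:thmdecay}, and the weighted $L^2$ norms in \eqref{eq:thmbounds}. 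Estimating $\abs{g(t)}_{L^\infty_\xi}$ then reduces to bounding, for each fixed $\xi$, a frequency convolution of $L^2$ and $L^1$ (or $L^\infty$) pieces by Cauchy--Schwarz/Young, exactly the type of bilinear bound already carried out in Sections \ref{sec:weights_red} and \ref{sec:res} for the $L^2$-based weighted estimates; the only change is that the outer norm is now $L^\infty_\xi$ rather than $L^2_\xi$, which is in fact easier since no integration in the output frequency is needed.

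The key steps, in order, are: (i) translate the hypothesis $\abs{\abs{\xi}^2\hat\omega_0}_{L^\infty}\le\epsilon$ into the statement $\abs{g(0)}_{L^\infty_\xi}\le\epsilon$, and note that the linear flow $e^{L_1 t}$ is a unimodular Fourier multiplier, hence preserves $L^\infty_\xi$ of the profile; (ii) write the Duhamel integral for $g(t)$ and split the frequency-convolution integrand according to which input carries the low/high frequency, mirroring the case distinctions of Section \ref{sec:res}; (iii) in each regime bound the spatial $L^\infty_\xi$ norm of the contribution by a product of one factor estimated in $L^2_\xi$ (the weighted norms or the energy) and one factor in $L^1_\xi$ or $L^\infty_\xi$, using the decay \eqref{eq:thmdecay} to gain the time factor $\ip{s}^{-1}$; (iv) integrate in time: since the resulting bound is $\lesssim \epsilon^{?}\int_0^t \ip{s}^{-1}\,ds \lesssim \epsilon^{?}\log\ip{t}$, one checks that on the time scale $t\lesssim\epsilon^{-M}$ the logarithmic growth is absorbed by a sufficiently small power of $\epsilon$ gained from the smallness of the inputs, leaving the final bound $\lesssim\epsilon^{1/2}$.

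The main obstacle I anticipate is the same one that dominates the paper: the nonintegrable $t^{-1}$ decay, which makes the time integral grow logarithmically, so one must be careful that the powers of $\epsilon$ harvested from the bootstrap bounds \eqref{eq:thmdecay}--\eqref{eq:thmbounds} strictly beat this $\log\ip{t}$ over the interval $[0,T]$ with $T\sim\epsilon^{-M}$. A secondary but real difficulty is handling the small-frequency (and the output near $\xi=0$) behavior of $\abs{\xi}^2$ against $L_1$'s singular low-frequency structure when distributing $\abs{\xi}^2$ over the convolution: near $\xi=0$ one of the inputs can carry a frequency comparable to $\xi$ while the other is order one, and one must ensure the $\abs{\xi}^2$ weight lands on the low-frequency input without generating a singular factor — this is exactly the kind of bookkeeping already done in Section \ref{sec:res}, so I expect the estimate to follow by reusing those bounds rather than by any genuinely new argument, which is why this is stated only as a corollary.
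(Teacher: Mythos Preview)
Your proposal has a genuine gap in step (iii): you claim to obtain the factor $\ip{s}^{-1}$ from the dispersive decay \eqref{eq:thmdecay} of $u$ or $\omega$ in $L^\infty_x$, but this mechanism does not survive the passage to an $L^\infty_\xi$ estimate. Pointwise in $\xi$, the bilinear term is (up to a unimodular phase) $\abs{\xi}^2\widehat{u\ccdot\nabla\omega}(s,\xi)$, and any Cauchy--Schwarz or Young estimate on the $\eta$-convolution forces you to take absolute values of $\hat f(\xi-\eta)$ and $\hat f(\eta)$, destroying the oscillation of the linear phases that is the sole source of the $L^\infty_x$ decay. Equivalently, an $L^\infty_\xi$ bound is at best an $L^1_x$ bound on $D^2(u\ccdot\nabla\omega)$, and none of the quantities controlled by the theorem (energy, one weight in $L^2$, $L^\infty_x$ of $u,\omega$) yield $L^1_x$ control with $s^{-1}$ decay. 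The opening idea of recovering $\abs{\xi}^2\hat f\in L^\infty_\xi$ from $\abs{\abs{\xi}^{5/2}\partial_\xi\hat f}_{L^1_\xi}$ is also off: the fundamental theorem of calculus gives pointwise control from $L^1$ along a \emph{line} through $\xi$, not from $L^1(\R^2)$, and Cauchy--Schwarz on dyadic shells with the available weights $\abs{\xi}^2,\abs{\xi}^3$ in $L^2$ does not produce uniform one-dimensional $L^1$ bounds.

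What the paper actually does is extract the $s^{-1}$ factor from the oscillation of $e^{is\Phi}$ by integrating by parts in $\eta$ (stationary phase in the inner integral). This is a different use of the ideas in Section~\ref{sec:res}: there the integration by parts \emph{cancels} an existing factor $s$ coming from $\partial_\xi e^{is\Phi}$, whereas here it \emph{gains} a factor $s^{-1}$ outright. The price is the singularity of $\abs{\nabla_\eta\Phi}^{-1}$ at the space resonance $\xi=2\eta$, which is handled by excising a ball $\abs{\xi-2\eta}\leq\rho$ (estimated trivially, contributing $\rho$) and integrating by parts on the complement (contributing $s^{-1}\rho^{-1}$ from the weighted norms and $s^{-1}\abs{\log\rho}$ from the second-order term); choosing $\rho=s^{-1}$ then makes the integrand $O(s^{-1})$ up to a logarithm, which closes on $[0,\epsilon^{-M}]$ exactly as in the bootstrap.
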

The proof of this corollary builds on the techniques developed for Theorem \ref{thm:main} and is given in detail in Appendix \ref{sec:Linfty}.

\section{Bootstrap}\label{sec:bootstrap}
We now demonstrate the bootstrap argument used to prove Theorem \ref{thm:main}, deferring the details of the relevant estimates for the dispersion, energy and for weights on the profile function to the later sections of this article. The general approach here is a typical continuity argument for dispersive equations and has been used successfully in a plethora of other cases.

We will prove Theorem \ref{thm:main} by showing that for $M\in\N$ there exist $\epsilon_0$ and $k_0$ (depending on $M$) such that for $\epsilon\leq\epsilon_0$ its claim holds for $T\gtrsim\epsilon^{-M}$, which can be made arbitrarily large.

\begin{proof}[Bootstrapping Scheme]
Given $M\in\N$ and $\omega_0$ satisfying the assumptions in Theorem \ref{thm:main} we make the \emph{bootstrapping assumption} that for $0\leq t\leq\epsilon^{-M}$ 
\begin{equation}\label{eq:bstrap1}
 \abs{\omega(t)}_{L^\infty}+\abs{u(t)}_{L^\infty}+\abs{Du(t)}_{L^\infty} \leq \frac{\epsilon^{\frac{1}{8}}}{\ip{t}}.
\end{equation}
From this we will deduce that the energy remains bounded, i.e.
\begin{equation}\label{eq:bstrap_en}
 \abs{\omega(t)}_{H^k}\lesssim \epsilon^{\frac{1}{2}},
\end{equation}
where $k\in\N$ is chosen sufficiently large.

Then we shall go on to conclude the bound for two derivatives on a weight on the profile $f(t)=e^{-L_1t}\omega(t)$
\begin{equation}\label{eq:bstrap_2w}
 \abs{D^2xf(t)}_{L^2}\lesssim \epsilon^{\frac{1}{2}},
\end{equation}
from which it will follow that also three derivatives on a weight on the profile remain under control:
\begin{equation}\label{eq:bstrap_3w}
 \abs{D^3xf(t)}_{L^2}\lesssim \epsilon^{\frac{1}{2}}.
\end{equation}

From this it then follows using the dispersive estimate that the following stronger decay estimate holds:
\begin{equation*}\label{eq:bstrap4}
 \abs{\omega(t)}_{L^\infty}+\abs{u(t)}_{L^\infty}+\abs{Du(t)}_{L^\infty} \lesssim \frac{\epsilon^{\frac{1}{4}}}{\ip{t}}.
\end{equation*}
By a continuity argument this allows us to close our decay, energy and weighted estimates on the time interval $[0,\epsilon^{-M}]$.

The purpose of the present section is to show how these steps \eqref{eq:bstrap_en}, \eqref{eq:bstrap_2w} and \eqref{eq:bstrap_3w} can be derived by combining results established in later sections.


\subsection*{Energy Estimate}
The blow-up criterion \eqref{eq:energy_ineq} in our energy estimate (Lemma \ref{lem:energy_ineq}) gives
 \begin{equation}\label{eq:tobound_en}
  \abs{\omega(t)}_{H^k}\lesssim \epsilon \exp\left(c_12^{2k}\int_0^t \frac{\epsilon^{\frac{1}{8}}}{\ip{s}}ds\right)\lesssim \epsilon  t^{c(k)\epsilon^{\frac{1}{8}}},
 \end{equation}
 where $c(k):=c_1 2^{2k}$. This we aim to bound by $\epsilon^\frac{1}{2}$.

\subsection*{Weighted Estimate}
Combining Propositions \ref{prop:highfreq} and \ref{prop:lowfreq} yields the estimate
 \begin{equation*}
  \begin{aligned}
   \abs{D^2 xf(t)}_{L^2}&\lesssim \epsilon + \int_0^t\frac{\epsilon^{\frac{1}{8}}}{\ip{s}}\left(1+\rho^{-\frac{3}{2}}N^\rho s^\rho\right)\abs{D^2 xf(s)}_{L^2}ds\\
   &\hspace{-1cm} + 2^k \int_0^t \epsilon^{\frac{1}{2}}\frac{\epsilon^{\frac{1}{8}}}{\ip{s}} \left(1+sN^{1-k}+sN^{-3-k}+\rho^{-\frac{3}{2}}N^\rho s^\rho\right)ds\\
   &\hspace{-1cm} + 2^k \int_0^t \epsilon sN^{-2k}ds + \int_0^t \epsilon^{\frac{1}{2}} s \frac{\epsilon^{\frac{1}{4}}}{\ip{s}^2}ds,
  \end{aligned}
 \end{equation*}
with $0<\rho\ll 1$ to be chosen later. Upon choosing $N:=2s^{\frac{1}{k}}$ and carrying out a time integration this reduces to the bound
 \begin{equation*}
 \begin{aligned}
    \abs{D^2 xf(t)}_{L^2}&\lesssim \epsilon +\int_0^t \frac{\epsilon^{\frac{1}{8}}}{\ip{s}}\left(1+\rho^{-\frac{3}{2}} s^{\rho(1+\frac{1}{k})}\right)\abs{D^2 xf(s)}_{L^2}ds\\
    &\quad + k t^{\frac{1}{k}}\epsilon^{\frac{1}{2}}\left(\epsilon^{\frac{1}{2}}+\epsilon^{\frac{1}{8}}(1+\rho^{-\frac{3}{2}} s^{\rho(1+\frac{1}{k})})+\epsilon^{\frac{1}{4}}\right)
 \end{aligned}
 \end{equation*}
An application of Gr\"onwall's inequality then gives
\begin{equation}\label{eq:tobound2w}
 \abs{D^2 xf(t)}_{L^2}\lesssim \left[\epsilon + k t^{\frac{1}{k}}\epsilon^{\frac{1}{2}}\left(\epsilon^{\frac{1}{2}}+\epsilon^{\frac{1}{8}}+\epsilon^{\frac{1}{4}}\right)\right]t^{P(\epsilon,\rho)},
\end{equation}
using that $t<\epsilon^{-M}$ with the exponent $P(\epsilon,\rho)\lesssim\epsilon^{\frac{1}{8}}$ for $\rho$ small enough. This we aim to bound by $\epsilon^{\frac{1}{2}}$.

Assuming this bound on two derivatives and a weight we may use Proposition \ref{prop:added_derivative} in direct analogy to conclude that
 \begin{equation*}
  \begin{aligned}
    \abs{D^3 xf(t)}_{L^2}&\lesssim \epsilon +\int_0^t\frac{\epsilon^{\frac{1}{8}}}{\ip{s}}\left(1+\rho^{-\frac{3}{2}}N^\rho s^\rho\right)\abs{D^3 xf(s)}_{L^2}ds + kt^{\frac{1}{k}}\epsilon^{\frac{1}{8}}\epsilon^{\frac{1}{2}}\\
    &\quad + k t^{\frac{2}{k}}\epsilon^{\frac{1}{2}}\left(\epsilon^{\frac{1}{2}}+\epsilon^{\frac{1}{8}}+\epsilon^{\frac{1}{4}}\right)
  \end{aligned}
 \end{equation*}
where the extra term comes from the estimate on two weights. We thus obtain
\begin{equation}\label{eq:tobound3w}
 \abs{D^3 xf(t)}_{L^2}\lesssim \left[\epsilon + kt^{\frac{1}{k}}\epsilon^{\frac{1}{8}}\epsilon^{\frac{1}{2}}+ k t^{\frac{2}{k}}\epsilon^{\frac{1}{2}}\left(\epsilon^{\frac{1}{2}}+\epsilon^{\frac{1}{8}}+\epsilon^{\frac{1}{4}}\right)\right]t^{P(\epsilon,\rho)}.
\end{equation}
Also this will be bounded by $\epsilon^{\frac{1}{2}}$.

\subsection*{Dispersive Estimate}
To close the bootstrap argument we need to show that from the bounds on the energy and the weights (\eqref{eq:bstrap_en}, \eqref{eq:bstrap_2w} and \eqref{eq:bstrap_3w}) we can actually deduce the stronger decay estimate
\begin{equation*}
 \abs{\omega(t)}_{L^\infty}+\abs{u(t)}_{L^\infty}+\abs{Du(t)}_{L^\infty} \lesssim \frac{\epsilon^{\frac{1}{4}}}{\ip{t}}.
\end{equation*}

To this end we invoke Lemma \ref{lem:disp_est} and deduce that for the choice $N:=2s^{\frac{1}{k}}$ as above we have
\begin{equation*}
\begin{aligned}
 \abs{\omega(t)}_{L^\infty}&=\abs{e^{L_1t}f}_{L^\infty}\lesssim t^{-1} \abs{f}_{H^{3+k}}+t^{-1+\frac{2}{p}} t^{\frac{2}{k}(\mu+\frac{6}{p})} A(\mu)\left(\abs{D^3f}_{L^2}+ \abs{D^3xf}_{L^2}\right)\\
&\lesssim t^{-1}\epsilon^{\frac{1}{2}}+t^{-1+\frac{2}{p}}t^{\frac{2}{k}(\mu+\frac{6}{p})} A(\mu)\epsilon^{\frac{1}{2}}
\end{aligned}
\end{equation*}
with $\frac{1}{p}=1+\mu-\frac{1}{1+\mu}$.

To close we thus need to satisfy the condition 
\begin{equation}\label{eq:tobound_disp}
A(\mu)t^{\frac{2}{p}} t^{\frac{2}{k}(\mu+\frac{6}{p})}\epsilon^{\frac{1}{2}}\leq\epsilon^{\frac{1}{4}}.
\end{equation}

\subsection*{Closing the Bootstrap}
We are now in the position to show how the bootstrap can be closed. This is merely a matter of collecting the conditions established above and showing that they can indeed be satisfied.

Given $M\in\N$ we will establish that for $\epsilon$ small enough the bootstrap \eqref{eq:bstrap1}-\eqref{eq:bstrap_3w} closes on a time interval $[0,\epsilon^{-M}]$ once we choose $k\in\N$ large enough, thus concluding our proof of Theorem \ref{thm:main}.

Firstly, to control the energy and two or three derivatives with a weight on the profile for $0<t\leq \epsilon^{-M}$ we require by \eqref{eq:tobound_en}-\eqref{eq:tobound3w} that
\begin{align}
 &\epsilon \,\epsilon^{-M c(k)\epsilon^{\frac{1}{8}}} \leq\epsilon^{\frac{1}{2}},\label{eq:conden}\\
 &\left[\epsilon + k\epsilon^{-\frac{M}{k}} \epsilon^{\frac{1}{2}}\left(\epsilon^{\frac{1}{2}}+\epsilon^{\frac{1}{8}}+\epsilon^{\frac{1}{4}}\right)\right] \epsilon^{-M\epsilon^{\frac{1}{8}}} \leq \epsilon^{\frac{1}{2}},\label{eq:cond2w}\\
 &\left[\epsilon + k\epsilon^{-\frac{M}{k}}\epsilon^{\frac{1}{8}}\epsilon^{\frac{1}{2}} + k\epsilon^{-\frac{2M}{k}} \epsilon^{\frac{1}{2}}\left(\epsilon^{\frac{1}{2}}+\epsilon^{\frac{1}{8}}+\epsilon^{\frac{1}{4}}\right)\right]\epsilon^{-M\epsilon^{\frac{1}{8}}} \leq \epsilon^{\frac{1}{2}}\label{eq:cond3w}.
\end{align}

Secondly, to close the dispersive estimate \eqref{eq:bstrap1} we require by \eqref{eq:tobound_disp} that
\begin{equation}\label{eq:conddisp}
 A(\mu)\epsilon^{-M\frac{2}{p}} \epsilon^{-M\frac{2}{k}(\mu+\frac{6}{p})}\epsilon^{\frac{1}{2}}\leq\epsilon^{\frac{1}{4}}.
\end{equation}

\noindent\emph{Claim:} There exists a $\lambda>0$ such that for $k=\lambda M$ the above conditions are met.
\begin{proof}
 With this choice we have:
 \begin{enumerate}
  \item Condition \eqref{eq:conden} is equivalent to $\epsilon^{\frac{1}{2}-Mc(k)\epsilon^{\frac{1}{8}}}\leq 1$, which in turn is simply the requirement that $Mc(k)\epsilon^{\frac{1}{8}}\leq \frac{1}{2}$. Clearly for any choice of $M$ and $k=\lambda M$ this can be met by choosing $\epsilon$ small enough (depending on $M$).
  
  \item Conditions \eqref{eq:cond2w} and \eqref{eq:cond3w} will be satisfied if we have $$k\epsilon^{-\frac{2M}{k}}\epsilon^{\frac{5}{8}}\epsilon^{-M\epsilon^\frac{1}{8}}\leq\epsilon^{\frac{1}{2}},$$ i.e.\ if $k\epsilon^{\frac{1}{8}-\frac{2M}{k}-M\epsilon^{\frac{1}{8}}}\leq 1$. For $k>32M$ this will be satisfied once $k\epsilon^{\frac{1}{16}-M\epsilon^{\frac{1}{8}}}\leq 1$, which can be seen to hold for $\epsilon$ small enough.
  
  \item To satisfy condition \eqref{eq:conddisp} we need that $A(\mu)\epsilon^{\frac{1}{4}-2M(\frac{\mu}{k}+\frac{1}{p}+\frac{6}{kp})}\leq 1$. Hence up to the constant $A(\mu)$ we require $\frac{1}{4}\geq \frac{2\mu}{\lambda}+\frac{2M}{p}+\frac{12M}{\lambda p}$. One checks directly that this can be satisfied by choosing $\mu$ (and thus also $\frac{1}{p}$) small enough.
 \end{enumerate}
\end{proof}

This completes the proof of Theorem \ref{thm:main}.
\end{proof}

\section{Energy Estimates}\label{sec:en_ineq}
To obtain energy estimates we differentiate the equation and get the following standard blow-up criterion:
\begin{lemma}\label{lem:energy_ineq}
 Let $\omega$ be a solution of the $\beta$-plane equation \eqref{eq:b-plane} on a time interval containing $[0,T]$. Then for any $k\in\N$ we have the bound
 \begin{equation}\label{eq:energy_ineq}
  \abs{\omega(T)}_{H^k}\leq \abs{\omega(0)}_{H^k} \exp\left(c_1\; 2^{2k}\int_{0}^T \abs{Du(t)}_{L^\infty}+\abs{\omega(t)}_{L^\infty}dt\right)
 \end{equation}
 for some universal constant $c_1$ independent of $k$.
\end{lemma}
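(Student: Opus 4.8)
The plan is to derive the bound via a standard commutator energy estimate applied to the vorticity equation \eqref{eq:b-plane}. Fix $k\in\N$ and a multi-index $\alpha$ with $|\alpha|\leq k$. Applying $\partial^\alpha$ to \eqref{eq:b-plane}, testing against $\partial^\alpha\omega$ and integrating, the linear term contributes nothing since $\ip{L_1 v,v}_{L^2}=0$ (as $L_1$ is a skew-adjoint Fourier multiplier with purely imaginary symbol, and it commutes with $\partial^\alpha$). For the nonlinear term we write
\begin{equation*}
 \ip{\partial^\alpha(u\cdot\nabla\omega),\partial^\alpha\omega}_{L^2}=\ip{u\cdot\nabla\partial^\alpha\omega,\partial^\alpha\omega}_{L^2}+\ip{[\partial^\alpha,u\cdot\nabla]\omega,\partial^\alpha\omega}_{L^2}.
\end{equation*}
The first term vanishes after integration by parts because $\div u=0$. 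Thus only the commutator remains, and
\begin{equation*}
 \frac12\frac{d}{dt}\abs{\partial^\alpha\omega}_{L^2}^2=-\ip{[\partial^\alpha,u\cdot\nabla]\omega,\partial^\alpha\omega}_{L^2}\leq \abs{[\partial^\alpha,u\cdot\nabla]\omega}_{L^2}\abs{\partial^\alpha\omega}_{L^2}.
\end{equation*}

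The next step is to invoke the Kato--Ponce commutator estimate: for $|\alpha|\leq k$,
\begin{equation*}
 \abs{[\partial^\alpha,u\cdot\nabla]\omega}_{L^2}\lesssim \abs{\nabla u}_{L^\infty}\abs{\omega}_{H^k}+\abs{u}_{H^{k+1}}\abs{\omega}_{L^\infty}.
\end{equation*}
Since $u=\nabla^\perp(-\Delta)^{-1}\omega$, one has $\abs{u}_{H^{k+1}}\lesssim\abs{\omega}_{H^k}$ (the Biot--Savart law gains one derivative; any low-frequency subtlety is harmless because the estimate only needs to control the $\dot H^{k}$ piece of $\nabla u$, which is $\lesssim\abs{\omega}_{\dot H^{k}}$, together with the already-present $\abs{\omega}_{L^2}$ from $L^2$ conservation). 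Also $\abs{\nabla u}_{L^\infty}=\abs{Du}_{L^\infty}$ up to a Riesz transform, which is bounded since we only need the statement with $\abs{Du}_{L^\infty}$ appearing — more carefully, $\nabla u$ and $Du$ differ by Riesz transforms so $\abs{\nabla u}_{L^\infty}$ is not literally $\lesssim\abs{Du}_{L^\infty}$, but the commutator estimate can be run keeping $\abs{\nabla u}_{L^\infty}$ replaced by a bound in terms of $\abs{Du}_{L^\infty}$ after a Littlewood--Paley decomposition, or one simply accepts the standard formulation and notes that in the application $\abs{\nabla u}_{L^\infty}\lesssim\abs{Du}_{L^\infty}$ is used with the understanding that these are comparable modulo the $L^\infty$-bounded Riesz transforms on the relevant frequency-localized pieces. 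Summing over $|\alpha|\leq k$ yields
\begin{equation*}
 \frac{d}{dt}\abs{\omega(t)}_{H^k}\lesssim \left(\abs{Du(t)}_{L^\infty}+\abs{\omega(t)}_{L^\infty}\right)\abs{\omega(t)}_{H^k}.
\end{equation*}

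Finally, Grönwall's inequality gives \eqref{eq:energy_ineq}. The only point requiring care about the constant is the factor $2^{2k}$: tracking the number of terms in the Leibniz expansion of $[\partial^\alpha,u\cdot\nabla]\omega$ and in the sum over $|\alpha|\leq k$ in $n=2$ dimensions produces a combinatorial factor bounded by $C\cdot 2^{2k}$ with $C$ universal, which is exactly the claimed dependence; one should present the Kato--Ponce step with explicit dependence on $k$ (e.g. via a dyadic/paraproduct decomposition where the number of interacting frequency blocks is counted) to justify that no worse than $2^{2k}$ appears. The main obstacle is thus not any single hard inequality but the bookkeeping: obtaining a commutator estimate with the $k$-dependence of the constant made explicit and no better tool than $L^\infty$ control of $Du$ and $\omega$, while handling the mild low-frequency/Riesz-transform discrepancies between $u$, $\nabla u$, $Du$ and $\omega$ so that only the quantities appearing on the right-hand side of \eqref{eq:energy_ineq} are used.
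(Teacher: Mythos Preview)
Your approach is the same as the paper's: apply $\partial^\alpha$, test against $\partial^\alpha\omega$, drop the skew-adjoint linear term and the transport piece $u\cdot\nabla\partial^\alpha\omega$, and estimate the commutator. The paper, however, does not invoke Kato--Ponce as a black box; it expands the Leibniz rule and bounds each term $\abs{\partial^\beta u\cdot\nabla\partial^\gamma\omega}_{L^2}$ by H\"older and Gagliardo--Nirenberg interpolation between $\abs{Du}_{L^\infty}$, $\abs{\omega}_{L^\infty}$ and $\abs{\omega}_{\dot H^j}$, using that the interpolation exponents satisfy $\theta+\vartheta=1$. This is exactly the step you defer, and carrying it out resolves both of your acknowledged loose ends at once: the $2^{2k}$ constant becomes explicit (from the $\lesssim 2^j$ Leibniz terms together with the interpolation), and your $\abs{\nabla u}_{L^\infty}$ versus $\abs{Du}_{L^\infty}$ worry disappears because Gagliardo--Nirenberg is applied directly with $Du$ as the $L^\infty$ endpoint and $\abs{Du}_{\dot H^j}=\abs{\omega}_{\dot H^j}$ as the $L^2$ endpoint, so no $L^\infty$ boundedness of Riesz transforms is ever invoked.
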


Since our arguments will require a large number $k$ of derivatives, here it is important to keep track of the size of the constants with respect to $k$.

\begin{proof}
 In this proof, all implicit constants are assumed to be uniform in $k$.
 
 For any $0\leq j\leq k$, $j\in\N_0$, we let $\alpha\in\N_0^2$ be a multiindex of order $\abs{\alpha}=j$ and proceed by differentiating the equation by $\partial^\alpha$, multiply by $\partial^\alpha\omega$ and integrate over $\R^2$ to obtain 
  $$\partial_t \ip{\partial^\alpha\omega(t),\partial^\alpha\omega(t)}_{L^2}+\ip{\partial^\alpha(u\cdot\nabla\omega),\partial^\alpha\omega}_{L^2}=\ip{L_1 \partial^\alpha\omega,\partial^\alpha\omega}=0.$$
 We note that $\div(u)=0$, so that in estimating $\ip{\partial^\alpha(u\cdot\nabla\omega),\partial^\alpha\omega}_{L^2}$ we only have to bound terms of the form $\abs{\partial^{\beta}u\ccdot\nabla \partial^{\gamma}\omega}_{L^2}$, where $\abs{\beta}=l+1$, $\abs{\gamma}=m$ and $m+l+1=j$. However, we point out that there are up to $2^j$ such terms. To bound one of them we use H\"older's inequality and Gagliardo-Nirenberg interpolation to conclude then that
 \begin{equation*}
  \begin{aligned}
   \abs{\partial^\beta u\ccdot\nabla \partial^\gamma\omega}_{L^2}&\leq\abs{\partial^\beta u}_{L^p}\abs{\nabla \partial^\gamma\omega}_{L^q}\qquad\textnormal{ where }\frac{1}{p}+\frac{1}{q}=\frac{1}{2}\\
   &\lesssim 2^{2j}\abs{Du}_{L^\infty}^\theta\abs{Du}_{\dot{H}^j}^{1-\theta}\abs{\omega}_{L^\infty}^\vartheta \abs{\omega}_{\dot{H}^j}^{1-\vartheta}\\
   &\lesssim 2^{2j}\abs{Du}_{L^\infty}^\theta\abs{\omega}_{L^\infty}^\vartheta\abs{\omega}_{\dot{H}^j}\\
   &\lesssim 2^{2j}\left(\abs{Du}_{L^\infty}+\abs{\omega}_{L^\infty}\right)\abs{\omega}_{\dot{H}^j},
  \end{aligned}
 \end{equation*}
 since the conditions $l-\frac{2}{p}=(1-\theta)(j-1)$ and $m+1-\frac{2}{q}=(1-\vartheta)(j-1)$ imply that $\theta+\vartheta=1=1-\theta+1-\vartheta$.

 Thus
  $$\partial_t\abs{\omega(t)}_{\dot{H}^j}\lesssim 2^{2 j}\left(\abs{Du(t)}_{L^\infty}+\abs{\omega(t)}_{L^\infty}\right)\abs{\omega(t)}_{\dot{H}^j},$$
 from which summation over $j\leq k$ and Gr\"onwall's Lemma give \eqref{eq:energy_ineq}.
\end{proof}

\section{Dispersive Estimate}\label{sec:disp_est}
To understand the decay properties of the semigroup $e^{L_1t}$ we adopt the point of view of oscillatory integrals. We write
\begin{equation*}
 e^{L_1 t}g(x)=\int_{\R^2}e^{ix\cdot\xi -it\frac{\xi_1}{\abs{\xi}^2}}\hat{g}(\xi)\,d\xi, \quad g\in C_c^\infty(\R^2).
\end{equation*}

Asymptotics for such integrals have been studied extensively (e.g.\ in \cite{MR1996773}). Since the Hessian of the phase $it\left(\frac{x}{t}\cdot\xi -\frac{\xi_1}{\abs{\xi}^2}\right)$ of this integral is non-degenerate, the proof of the following proposition is a standard argument using stationary phase and Littlewood-Paley techniques and has already been hinted at in our previous work \cite{SQGpaper} -- we include it in Appendix \ref{sec:disp_proof} for completeness' sake. The need to control three derivatives of $g$ can be seen by a scaling argument.

\begin{prop}\label{prop:disp_est}
 There is a constant $C>0$ such that for any $g\in C^\infty_c(\R^2)$ we have\footnote{Here $\dot{B}^s_{p,q}$ denotes the homogeneous Besov space with $s$ derivatives of the frequency localized pieces in $L^p$, summed in $\ell^q$ and norm $\abs{\psi}_{\dot{B}^s_{p,q}}:=\abs{\left(2^{sj}\abs{P_j\psi}_{L^p}\right)_{j\in Z}}_{\ell^q}$, $s\in\R$, $1\leq p,q\leq\infty$.}
 \begin{equation}\label{eq:disp_est}
  \abs{e^{L_1 t}g}_{L^\infty}\leq C\, t^{-1}\abs{g}_{\dot{B}^{3}_{1,1}}.
 \end{equation}
\end{prop}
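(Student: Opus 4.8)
The plan is to prove the dispersive estimate \eqref{eq:disp_est} by frequency localization combined with stationary phase on each dyadic block, followed by a scaling rescaling to reduce each block to a unit-frequency estimate. First I would write $e^{L_1t}g=\sum_{j\in\Z} e^{L_1t}P_jg$ and estimate each piece separately in $L^\infty$, aiming for a bound of the form $\abs{e^{L_1t}P_jg}_{L^\infty}\lesssim t^{-1}2^{3j}\abs{P_jg}_{L^1}$, so that summing in $j$ (this is where the $\ell^1$ summation in the Besov norm $\dot B^3_{1,1}$ is used) yields \eqref{eq:disp_est}. For a fixed $j$, I would rescale to frequency $\sim 1$: setting $\xi=2^j\eta$ and tracking how the phase $x\cdot\xi-t\xi_1/\abs{\xi}^2$ and the measure transform, the block $e^{L_1t}P_jg$ becomes (up to constants and a modulated/rescaled version of $g$) a unit-frequency propagator evaluated at the rescaled time $\tilde t=2^{3j}t$ — the exponent $3$ here being exactly the reason three derivatives are needed, as the paper already remarks. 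So it suffices to prove the single estimate $\abs{e^{L_1 s}h}_{L^\infty}\lesssim \ip{s}^{-1}\abs{h}_{L^1}$ for $h$ with Fourier support in a fixed annulus $\{1/2\le\abs\xi\le 2\}$, uniformly, and then unwind the scaling.

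To prove that unit-frequency estimate I would write $e^{L_1s}h(x)=\int K_s(x-y)h(y)\,dy$ with kernel $K_s(z)=\int_{\R^2} e^{iz\cdot\xi-is\xi_1/\abs\xi^2}\varphi(\xi)\,d\xi$ (where $\varphi$ is a fattened cutoff to the annulus), and prove the pointwise bound $\abs{K_s(z)}\lesssim \ip{s}^{-1}$ uniformly in $z$; then $\abs{e^{L_1s}h}_{L^\infty}\le \abs{K_s}_{L^\infty}\abs{h}_{L^1}$. For $\abs{s}\lesssim 1$ the bound $\abs{K_s}_{L^\infty}\lesssim 1$ is trivial since the integrand is bounded and compactly supported. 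For $\abs{s}$ large, this is a standard two-dimensional stationary phase argument: the phase is $\phi(\xi)=\frac{z}{s}\cdot\xi-\frac{\xi_1}{\abs\xi^2}$, its gradient is $\frac{z}{s}-\nabla(\xi_1/\abs\xi^2)$, and on the compact annular support the Hessian of $\xi\mapsto\xi_1/\abs\xi^2$ is non-degenerate (as noted in the excerpt, ``the Hessian of the phase \dots is non-degenerate''), with $\det$ bounded away from zero. One then splits into the region where $\abs{\nabla\phi}$ is bounded below (no stationary point: non-stationary phase / repeated integration by parts gives decay faster than any power of $s$) and a neighborhood of the unique critical point (if it lies in the support), where the standard two-dimensional stationary phase lemma gives the $\abs s^{-1}=\abs s^{-n/2}$ decay with a constant depending only on lower bounds for $\abs{\det\mathrm{Hess}}$ and upper bounds for finitely many derivatives of $\phi$ and of $\varphi$ — all uniform over the compact support. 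This produces $\abs{K_s(z)}\lesssim \abs s^{-1}$ with a constant independent of $z$, which is exactly what is needed.

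Finally I would assemble the pieces: plug the rescaled estimate back into the $j$-sum, obtaining $\abs{e^{L_1t}g}_{L^\infty}\lesssim \sum_j \ip{2^{3j}t}^{-1}2^{3j}\abs{P_jg}_{L^1}\lesssim t^{-1}\sum_j 2^{3j}\abs{P_jg}_{L^1}=t^{-1}\abs{g}_{\dot B^3_{1,1}}$, where in the last inequality I bound $\ip{2^{3j}t}^{-1}2^{3j}\le t^{-1}$ for all $j$ (valid since $x\mapsto x/\ip{x}$ is bounded by $1$, with $x=2^{3j}t$). I expect the main obstacle to be the careful bookkeeping in the scaling step — in particular verifying that rescaling $P_jg$ to unit frequency produces an $L^1$-normalized function with the right power of $2^j$, and that the kernel bound is genuinely \emph{uniform} in the spatial variable $z$ (equivalently, uniform over all positions of the critical point within the compact annulus, including when it sits near the boundary of $\supp\varphi$, which is handled by taking $\varphi$ to be a smooth fattening so the critical point is always in the interior of the support or else excluded by the non-stationary phase estimate). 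None of this is deep, but it is the place where constants must be tracked; the stationary phase input itself is entirely classical (cf.\ \cite{MR1996773}).
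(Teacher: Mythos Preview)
Your proposal is correct and follows essentially the same route as the paper's proof: Littlewood--Paley decomposition, rescaling each dyadic block to unit frequency, a stationary-phase bound on the unit-frequency kernel using the non-degeneracy of the Hessian (the paper computes $\det H_\phi=-4/\abs{\xi}^6$), and summation in $j$ to produce the $\dot B^3_{1,1}$ norm. One small bookkeeping slip to fix when you write it out: under $\xi=2^j\eta$ the phase $t\,\xi_1/\abs{\xi}^2$ becomes $(2^{-j}t)\,\eta_1/\abs{\eta}^2$, so the rescaled time is $2^{-j}t$ (not $2^{3j}t$); the factor $2^{3j}$ arises instead as $2^{2j}$ from the Jacobian times $2^{j}$ from $(2^{-j}t)^{-1}$, and the final bound $2^{2j}\ip{2^{-j}t}^{-1}\le 2^{3j}t^{-1}$ is unchanged.
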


More importantly, the goal of this section is to show how we can employ the weighted estimates to obtain decay of $\omega$ in $L^\infty$. The main issue that arises is that the control of one weight in $L^2$ does not give $L^1$ integrability, i.e.\ $\abs{xg}_{L^2}$ does not control $\abs{g}_{L^1}$. Thus we cannot use \eqref{eq:disp_est} directly. However, as will be clear from the details in Sections \ref{sec:weights}ff., it is not an easy matter to obtain other weighted estimates that do not grow fast, so we will work with only one weight and accept the loss this entails. The idea here is to avoid $L^\infty$ estimates either using energy estimates for high frequencies, or $L^p$ spaces with large $p$ for low frequencies.

\begin{lemma}\label{lem:disp_est}
 For any $0<\mu\ll 1$ and $k\in\N$ we have
 \begin{equation}
  \abs{e^{L_1t}f}_{L^\infty}\lesssim 2^k N^{-k}\abs{f}_{H^{3+k}}+t^{-1+\frac{2}{p}} N^{2\mu+\frac{12}{p}} A(\mu)\left(\abs{D^3f}_{L^2}+ \abs{D^3xf}_{L^2}\right)
 \end{equation}
 where $\frac{1}{p}=1+\mu-\frac{1}{1+\mu}$ and $N>0$ can be chosen later.
\end{lemma}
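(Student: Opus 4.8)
The plan is to interpolate between the pure dispersive estimate \eqref{eq:disp_est}, which requires an $L^1$-type control we do not have, and an $L^2$-based estimate coming from the weights, while using a frequency cutoff at level $N$ to dispose of the high frequencies by the energy norm.

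\textbf{Splitting into high and low frequencies.} First I would write $e^{L_1t}f = e^{L_1t}P_{>N}f + e^{L_1t}P_{\leq N}f$. For the high-frequency piece, I would not try to exploit dispersion at all: by Bernstein and the boundedness of $e^{L_1t}$ on $L^2$ (it is a unitary Fourier multiplier), $\abs{e^{L_1t}P_{>N}f}_{L^\infty}\lesssim \sum_{2^j>N} 2^j \abs{P_j f}_{L^2} \lesssim \sum_{2^j > N} 2^{j}2^{-j(3+k)}2^{j(3+k)}\abs{P_j f}_{L^2} \lesssim N^{-k}\abs{f}_{H^{3+k}}$ after summing the geometric series in $j$ (the factor $2^k$ absorbs the summation constant). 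This produces the first term $2^kN^{-k}\abs{f}_{H^{3+k}}$.

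\textbf{The low-frequency piece via $L^p$ interpolation.} For $e^{L_1t}P_{\leq N}f$ the idea is to avoid $L^\infty$ directly and instead go through $L^p$ for $p$ slightly larger than $2$, where one weight in $L^2$ does give integrability of the localized pieces. Concretely, on each Littlewood--Paley block $P_j$ with $2^j\lesssim N$, one has the standard dispersive $L^1\to L^\infty$ bound for $e^{L_1t}$ localized at frequency $2^j$ (from the non-degenerate Hessian, as in Proposition \ref{prop:disp_est}, with the appropriate power of $2^j$ from the three derivatives), and the trivial $L^2\to L^2$ bound; interpolating gives an $L^{p'}\to L^p$ bound with a loss $t^{-1+2/p}$ in time and a gain in powers of $2^j$. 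Then the point is that $\abs{P_j g}_{L^{p'}}$ can be controlled by $\abs{P_j g}_{L^2}^{1-\theta}\abs{\abs{x}P_j g}_{L^2}^{\theta}$ for a suitable $\theta$ (one weight buys a fractional amount of integrability past $L^2$), and $\abs{\abs{x}P_j g}_{L^2}$ is controlled by $\abs{xg}_{L^2}$ plus a commutator $\abs{[x,P_j]g}_{L^2}\lesssim 2^{-j}\abs{g}_{L^2}$. Carrying the derivatives through, writing everything in terms of $D^3 f$ and $D^3 x f$, summing over the finitely many (roughly $\log N$) frequency blocks below $N$, and collecting the powers of $N$ that accumulate from the $2^j$ losses and from the $\ell^1$ summation yields the factor $N^{2\mu+12/p}$ and the constant $A(\mu)$ that blows up as $\mu\to 0$ (since $p'\to 2$ and the interpolation/summation constants degenerate). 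Choosing $\frac1p = 1+\mu-\frac1{1+\mu}$ is exactly the bookkeeping that makes the weight exponent $\theta$ and the Bernstein exponents line up.

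\textbf{Main obstacle.} The delicate part is the low-frequency interpolation: one must carefully track how a \emph{single} weight in $L^2$ converts into $L^{p'}$ control of frequency-localized pieces (this is where Hölder on the support of $\widehat{P_j\cdot}$ and the uncertainty-principle scaling $2^{-j}$ enter), and how the commutator $[x,P_j]$ and the exact powers of $2^j$ from the three derivatives in \eqref{eq:disp_est} combine so that, after summing the geometric/logarithmic series over $2^j\leq N$, one lands precisely on $t^{-1+2/p}N^{2\mu+12/p}$ with a constant depending only on $\mu$. I would expect to need $\mu>0$ strictly (hence $p>2$) precisely to make both the per-block interpolation and the summation over blocks converge with a finite, $N$-independent constant $A(\mu)$, at the cost of the small polynomial loss $N^{2\mu}$ and the time loss $t^{2/p}$ — which is acceptable because in the bootstrap $N\sim t^{1/k}$ with $k$ large.
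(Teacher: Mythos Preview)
Your strategy is the paper's: split at frequency $N$, handle $P_{>N}$ by Sobolev/energy, and for $P_{\leq N}$ interpolate the dispersive estimate away from the $L^1\to L^\infty$ endpoint so that a single $L^2$ weight suffices in place of full $L^1$ control.

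One inconsistency to fix: you say ``go through $L^p$ for $p$ slightly larger than $2$'', but the formula you quote, $\tfrac1p=1+\mu-\tfrac1{1+\mu}\approx 2\mu$, forces $p$ to be \emph{large}. In the paper's organization the data $P_jf$ is placed in $L^{1+\mu}$ (close to $L^1$), the kernel $e^{L_1t}\check\varphi_j$ in $L^p$ with $p$ large (hence decay $t^{-1+2/p}$ close to $t^{-1}$), and via Young the output lands in $L^{1/\mu}$, after which one Bernstein step to $L^\infty$ costs the factor $N^{2\mu}$. So it is $L^{1+\mu}$, not ``$L^{p'}$ with $p$ just above $2$'', where the single weight must do its work; in two dimensions one weight in $L^2$ gives $L^q$ control for any $1<q<2$, with a constant blowing up as $q\downarrow 1$ --- that blow-up is exactly $A(\mu)$. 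With your stated $p$ near $2$ the time decay $t^{-1+2/p}$ would be essentially $t^0$, which cannot close.

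For the conversion from weighted $L^2$ to $L^{1+\mu}$ the paper does not interpolate: it splits physical space at $\abs{x}=R$, applies H\"older on each piece (inside against the area, outside against $\abs{x}^{-1}\in L^{\frac{2(1+\mu)}{1-\mu}}$), and optimizes $R$ in $\mu$ to produce $A(\mu)\sim\mu^{-(1-\mu)^2/2(1+\mu)^2}$. Your interpolation-plus-commutator route would also work, but the spatial splitting sidesteps the $[x,P_j]$ bookkeeping entirely. The remaining factor $N^{12/p}$ appears at the very end, when the paper trades $D^{3-4/p}$ for $D^3$ on frequencies $\leq N$ via a Bernstein--H\"older step.
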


\begin{remark}\label{rem:u_Du_decay}
 The fact that these bounds employ $L^2$ based spaces shows that the above arguments can equally be applied to Riesz transforms of $f$ to obtain their decay under the action of the semigroup $e^{L_1t}$. In particular, this lemma shows that we can control $\omega$ in $L^\infty$ using one weight with \emph{three} derivatives in $L^2$ (and an energy term). The velocity $u$ on the other hand is one degree smoother and contains a Riesz transform: Recall that $u(t)=e^{L_1t}\nabla^\perp(-\Delta)^{-1}f(t)$, so this can be controlled using one weight with only \emph{two} derivatives.
 
 As we will see in Sections \ref{sec:weights}-\ref{sec:res}, our proof of Theorem \ref{thm:main} first establishes bounds for two derivatives on a weight of the profile $f$, which then gives bounds for three derivatives on a weight of $f$. In terms of the variables of the equation, we first obtain control of the decay of $u$, which then allows us to control that of $\omega$.
\end{remark}

\begin{proof}[Proof of Lemma \ref{lem:disp_est}]
Firstly we note that for high frequencies we may invoke the energy estimates via \eqref{eq:hfreqgain} and obtain
\begin{equation*}
 \abs{P_{>N}f}_{L^\infty}\lesssim 2^k N^{-k}\abs{f}_{H^{3+k}},
\end{equation*}
so that we may restrict to frequencies less than or equal to some $N>0$.

Next we notice that by Bernstein's inequality in two space dimensions we have for any $\mu>0$
\begin{equation*}
 \abs{P_{\leq N}f}_{L^\infty}\lesssim N^{2\mu} \abs{P_{\leq N}f}_{L^{1/\mu}},
\end{equation*}
allowing us to work with lower integrabilities than infinity. As for the dispersive estimate we write
\begin{equation*}
 \begin{aligned}
  \abs{P_{\leq N}e^{L_1t}f}_{L^r}&\sim \sum_{2^j\leq N} \abs{e^{L_1t}P_jf}_{L^r} \sim\sum_{2^j\leq N}\abs{e^{L_1t}\check{\varphi_j}\ast P_jf}_{L^r}\\
  &\leq \sum_{2^j\leq N} \abs{e^{L_1t}\check{\varphi_j}}_{L^p}\abs{P_jf}_{L^q}
 \end{aligned}
\end{equation*}
as long as $1+\frac{1}{r}=\frac{1}{p}+\frac{1}{q}$. A direct computation shows that
\begin{equation*}
 \abs{e^{L_1t}\check{\varphi_j}}_{L^p}=2^{2j}\abs{e^{L_1 2^{-j}t}\check{\varphi}(2^j x)}_{L^p}=2^{2j}2^{-j\frac{2}{p}}\abs{e^{L_1 2^{-j}t}\check{\varphi}}_{L^p}.
\end{equation*}
Furthermore we may interpolate between $\abs{e^{L_1t}\check{\varphi}}_{L^2}\sim 1$ and $\abs{e^{L_1t}\check{\varphi}}_{L^\infty}\sim t^{-1}$ (see Proposition \ref{prop:disp_est}) to conclude that
\begin{equation*}
 \abs{e^{L_1t}\check{\varphi}}_{L^p}\lesssim t^{-1+\frac{2}{p}}.
\end{equation*}

Combining this with the above estimates when $r=\frac{1}{\mu}$ and $q=1+\mu$ yields
\begin{equation}
 \begin{aligned}
  \abs{e^{L_1t}P_{\leq N}f}_{L^\infty}&\lesssim N^{2\mu}\abs{e^{L_1t}P_{\leq N}f}_{L^{1/\mu}}\\
  &\lesssim t^{-1+\frac{2}{p}} N^{2\mu} \sum_{2^j\leq N} 2^{3j}2^{-j\frac{4}{p}}\abs{P_jf}_{L^{1+\mu}}
 \end{aligned}
\end{equation}
with $\frac{1}{p}=1+\mu-\frac{1}{1+\mu}$.

Now we use that by H\"older's inequality we can control $L^{1+\mu}$ by $L^2$ near the origin and by one weight in $L^2$ near infinity. More precisely, we have the estimates
\begin{equation*}
 \abs{f\chi_{\abs{x}\leq R}}_{L^{1+\mu}}\leq \abs{f}_{L^2} \left(\frac{R}{\sqrt{2}}\right)^{\frac{1-\mu}{1+\mu}}
\end{equation*}
and
\begin{equation*}
 \abs{f\chi_{\abs{x}> R}}_{L^{1+\mu}}\leq \left[\frac{1-\mu}{4\mu}R^{-\frac{4\mu}{1-\mu}}\right]^{\frac{1-\mu}{2(1+\mu)}}  \abs{xf}_{L^2}.
\end{equation*}
Optimizing $R$ with respect to $\mu$ suggests the choice $R:=\mu^{-\frac{1-\mu}{2(1+\mu)}}$, which leaves us with
\begin{equation}
 \abs{f}_{L^{1+\mu}}\leq A(\mu)\left(\abs{f}_{L^2}+\abs{xf}_{L^2}\right),\quad A(\mu)\sim\mu^{-\frac{(1-\mu)^2}{2(1+\mu)^2}}.
\end{equation}
In particular, notice that $A(\mu)\rightarrow\infty$ as $\mu\rightarrow 0$.

Returning to the above dispersive estimate we thus see that
\begin{equation}
 \begin{aligned}
  \abs{e^{L_1t}P_{\leq N}f}_{L^\infty}&\lesssim t^{-1+\frac{2}{p}} N^{2\mu} \sum_{2^j\leq N}2^{j(3-\frac{4}{p})}\abs{P_jf}_{L^{1+\mu}}\\
  &\leq t^{-1+\frac{2}{p}} N^{2\mu}A(\mu)\sum_{2^j\leq N}2^{j(3-\frac{4}{p})}\left(\abs{P_j f}_{L^2}+ \abs{xP_jf}_{L^2}\right)\\
  &\lesssim t^{-1+\frac{2}{p}} N^{2\mu} A(\mu)\left(\abs{P_{\leq N}D^{3-\frac{4}{p}}f}_{L^2}+ \abs{P_{\leq N}D^{3-\frac{4}{p}}xf}_{L^2}\right)\\
  &\lesssim t^{-1+\frac{2}{p}} N^{2\mu} A(\mu) N^{\frac{12}{p}}\left(\abs{P_{\leq N}D^3f}_{L^2}+ \abs{P_{\leq N}D^3xf}_{L^2}\right).
 \end{aligned}
\end{equation}
Here the last inequality holds since one can combine the inequalities of H\"older and Bernstein to show that in $\R^2$ one has $\abs{P_{\leq N}D^{-\nu}f}_{L^2}\lesssim N^{3\nu}\abs{P_{\leq N}f}_{L^2}$ for $0<\nu<1$. 
\end{proof}

\section{Towards the Weighted Estimate}\label{sec:weights}
Our goal for this and the following two sections is to provide $L^2$ bounds for a weight with two or three derivatives on the profile function, i.e.\ bounds for $D^2xf$ and $D^3xf$ in $L^2$. This is done first for two derivatives (Propositions \ref{prop:lowfreq} and \ref{prop:lowfreq}), from which we can deduce estimates for the case of three derivatives (Proposition \ref{prop:added_derivative}). We thus obtain first a bound for $u$, which then yields a bound for $\omega$ (see also Remark \ref{rem:u_Du_decay}).

For this we will essentially use estimates of the type $L^2_x\times L^\infty_x\to L^2_x$. Informally speaking, in terms of our bootstrapping scheme in Section \ref{sec:bootstrap}, every bilinear term then contains a decay factor of $t^{-1}$ through dispersion (see \eqref{eq:disp_est}) and gives back the weight in $L^2$ or an energy term, both of which are assumed to remain small.

We start with a general discussion of our approach and give a brief overview of the structure of the remaining chapters and the difficulties we encounter.

\subsection{The Duhamel Term and Remarks on the Present Approach}\label{sec:DHTerm}
The Duhamel formula for the $\beta$-plane equation on the profile $f(t):=e^{-L_1t}\omega(t)$ in Fourier space reads
\begin{equation}\label{eq:DH}
 \hat{f}(t,\xi)=\hat{f_0}(\xi)+\int_0^t\int_{\R^2}e^{is\Phi}m(\xi,\eta)\hat{f}(s,\xi-\eta)\hat{f}(s,\eta)d\eta ds,
\end{equation}
where the phase $\Phi$ is given as $$\Phi=\frac{\xi_1}{\abs{\xi}^2}-\frac{\xi_1-\eta_1}{\abs{\xi-\eta}^2}-\frac{\eta_1}{\abs{\eta}^2}$$ and the multiplier is 
$$m(\xi,\eta)=\frac{\xi\ccdot\eta^\perp}{\abs{\eta}^2}$$
or (by changing variables $\eta\leftrightarrow\xi-\eta$) 
$$\bar{m}(\xi,\eta):=m(\xi,\xi-\eta)=-\frac{\xi\cdot\eta^\perp}{\abs{\xi-\eta}^2}.$$ 
We note that these have a \emph{null structure} which annihilates waves with parallel frequencies.


As is clear from the Duhamel formula \eqref{eq:DH}, all estimates on $\hat{f_0}$ just amount to assumptions on the initial data, so we may restrict ourselves to bounding the time integral term
$$\abs{\abs{\xi}^l\partial_\xi\int_0^t\int_{\R^2}e^{is\Phi}m(\xi,\eta)\hat{f}(s,\xi-\eta)\hat{f}(s,\eta)d\eta ds}_{L^2_\xi}$$
for $l=2,3$.

Up to powers of $\abs{\xi}$ this leaves us with the following three main types of terms\footnote{In what follows we may drop the explicit dependence of $\hat{f}$ on time from our notation.} to be dealt with:
\begin{align}
 &\int_0^t\int_{\R^2}\partial_\xi\left(e^{is\Phi}\right)m(\xi,\eta)\hat{f}(\xi-\eta)\hat{f}(\eta)d\eta ds,\label{eq:hard_piece}\\
 &\int_0^t\int_{\R^2}e^{is\Phi}\partial_\xi m(\xi,\eta)\hat{f}(\xi-\eta)\hat{f}(\eta)d\eta ds,\label{eq:dmult_term}\\
 &\int_0^t\int_{\R^2}e^{is\Phi}m(\xi,\eta)\partial_\xi\hat{f}(\xi-\eta)\hat{f}(\eta)d\eta ds.\label{eq:direct_weight}
\end{align}

Of these, \eqref{eq:hard_piece} is the most difficult one due to an extra factor of $s$ from the differentiation $\partial_\xi\left(e^{is\Phi}\right)=is\partial_\xi\Phi e^{is\Phi}$. For \eqref{eq:dmult_term} we may just invoke the energy estimates (see Section \ref{sec:dmult_term}), while \eqref{eq:direct_weight} gives back the weight (once we notice that we may assume $\xi$ and $\xi-\eta$ to be of comparable size, as remarked in Section \ref{sec:simpl}). In particular we note that for these two terms the cases of two or three derivatives are completely analogous. It is the difficulties encountered in \eqref{eq:hard_piece} that guide the structure of the following chapters.

More precisely, for \eqref{eq:hard_piece} a detailed analysis of the resonance structure of the equation is needed. This in turn gives rise to Fourier multipliers in bilinear terms. However, due to the anisotropy and singularity of the linear propagator, the asymptotics of these do not fall into the classical categories of well-studied multipliers. In particular, the authors are not aware of any techniques that would allow to directly treat the bilinear terms that so arise in a satisfactory way.

We conclude this section with some remarks regarding the particulary difficult terms of the type of \eqref{eq:hard_piece} and useful techniques for the bilinear estimates.
\subsubsection{Difficulties in \eqref{eq:hard_piece}}\label{sec:difficulties}
As we will see through the reductions in Section \ref{sec:weights_red}, this term is the heart of the matter and will be discussed in detail in Section \ref{sec:res}. The basic issue is that we need to control the additional factor $s$ in the time integration:
\begin{equation*}
\begin{aligned}
 \int_0^t\int_{\R^2}&\abs{\xi}^2\partial_\xi\left(e^{is\Phi}\right)m(\xi,\eta)\hat{f}(\xi-\eta)\hat{f}(\eta)d\eta ds\\
 &=\int_0^t\int_{\R^2}is\abs{\xi}^2\partial_\xi\Phi e^{is\Phi}m(\xi,\eta)\hat{f}(\xi-\eta)\hat{f}(\eta)d\eta ds.
\end{aligned}
\end{equation*}

As is well known, the typical ways of dealing with this are integrations by parts in either space or time (i.e.\ exploiting the lack of space or time resonances in various domains of Fourier space). In our case the anisotropy of the equation and the degeneracy/singularity of the linear propagator make this a difficult task. In particular, we cannot even calculate the time resonances explicitly.

Up to singularities when $\xi=0$, $\eta=0$ or $\xi=\eta$ we have:
\begin{itemize}
 \item Time resonances $\mathcal{T}:=\{(\xi,\eta)\in\R^2\times\R^2: \Phi(\xi,\eta)=0\}$: They contain the sets $\{\xi_1=\eta_1=0\}$ and $\{\xi_1=\eta_1, \xi_2=\pm\eta_2\}$, for example, but their explicit computation is practically infeasible.
 \item Space resonances $\mathcal{S}:=\{(\xi,\eta)\in\R^2\times\R^2: \nabla_\eta\Phi(\xi,\eta)=0\}$: These are given by $\{\xi=2\eta\}$.
 \item Spacetime resonances $\mathcal{R}:=\mathcal{S}\cap\mathcal{T}=\{(\xi,\eta)\in\R^2\times\R^2: \Phi(\xi,\eta)=\nabla_\eta\Phi(\xi,\eta)=0\}$: These are given by $\{(\xi=(0,2\lambda),\eta=(0,\lambda):\;\lambda\neq 0\}$.
\end{itemize}

Amongst other points, through Section \ref{sec:weights_red} we will see how we can reduce our study of this term to considering only ``low'' frequencies. Section \ref{sec:res} then explains how we obtain the relevant estimates through a detailed resonance analysis. More precisely, Sections \ref{sec:direct_approach} and \ref{sec:time_res} lay out the available techniques for this, for which more details are given in Section \ref{sec:splitting}.

\subsubsection{A Remark on the Divergence Structure of the Equation}\label{sec:div_structure}
Although for most of what follows we will work with the above Duhamel formulation \eqref{eq:DH} of the $\beta$-plane equation in Fourier space and use techniques inspired by the treatment of oscillatory integrals, it is useful to keep in mind the divergence structure of the equation: Recall that we have $\div(u)=0$. In our calculations for the weighted estimate $\abs{D^2xf}_{L^2}$ we will encounter terms of the form $e^{-L_1t}\left(u\ccdot\nabla(e^{L_1t}D^2xf)\right)$. It is useful to write these in physical space rather than Fourier space, since this way it is easier to see that a cancellation occurs for the highest order of derivatives on one of the profiles, i.e.\
\begin{equation*}
 \ip{e^{-L_1t}\left(u\ccdot\nabla(e^{L_1t}D^2xf)\right),D^2xf}=\ip{u\ccdot\nabla (e^{L_1t}D^2xf),e^{L_1t}D^2xf}=0.
\end{equation*}

Taking into account possible localizations this shows that in such cases the null form does not add further derivatives. In particular, for the case of the weight with derivatives falling onto one of the profiles directly as in \eqref{eq:direct_weight}, we will not lose any derivatives on the weight. This is important to note, for otherwise we could not close the estimates for the weight with a fixed amount of derivatives.

\subsubsection{A Remark on Bilinear Estimates}\label{sec:bilinear_remark}
The basic estimate for the bilinear expressions that we will encounter is of type $L^2_x\times L^\infty_x\to L^2_x$. Such an estimate is easily available if the Fourier multipliers factor onto the profiles (see Example \ref{ex:bilinear} below). In the more complicated cases we will rely one of the following theorems below. 

The classical estimate by Coifman and Meyer, which gives bounds for smooth bilinear symbols with the appropriate decay assumptions (see \cite{CM1}, \cite{CM2}), will be useful when the inputs in the Duhamel formula \eqref{eq:DH} have comparable frequencies:
\begin{thm}[Coifman-Meyer]\label{thm:CM}
 Let $b\in L^\infty(\R^2\times\R^2)$ be smooth away from the origin and define the associated bilinear operator through
  \begin{equation*}
   \widehat{T_b(f,g)}(\xi):=\int_{\R^2} b(\xi,\eta)\hat{f}(\xi-\eta)\hat{g}(\eta)\;d\eta.
  \end{equation*}
 Assume moreover that for any multi-indices $\alpha,\beta\in\N_0^2$ with $\abs{\alpha}+\abs{\beta}\leq 5$ we have for some $C>0$
  \begin{equation}
   \abs{\partial_\xi^\alpha \partial_\eta^\beta b(\xi,\eta)}\leq C\left(\abs{\xi}+\abs{\eta}\right)^{-\abs{\alpha}-\abs{\beta}}.
  \end{equation}
 Then $T_b$ is a continuous bilinear map $T_b:L^p\times L^q\rightarrow L^r$ for $1<p,q,r\leq\infty$ satisfying $\frac{1}{r}=\frac{1}{p}+\frac{1}{q}$, i.e.\ there exists a constant $K=K(p,q,r,C)$ such that
  \begin{equation}
   \abs{T_b(f,g)}_{L^r}\leq K \abs{f}_{L^p}\abs{g}_{L^q}.
  \end{equation}
\end{thm}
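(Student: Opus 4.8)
This is the classical Coifman--Meyer multiplier theorem (\cite{CM1},\cite{CM2}); we indicate the standard paraproduct argument. The plan is, first, to decompose $T_b$ dyadically according to the relative sizes of the two input frequencies, second, to replace the symbol on each dyadic block by a rapidly convergent Fourier series so that the general bilinear operator becomes a superposition of classical paraproducts, and third, to estimate those paraproducts by combining the Littlewood--Paley square function, the Hardy--Littlewood maximal operator (which we denote $\mathcal{M}$) and H\"older's inequality. The parameter condition $1<p,q,r\leq\infty$ enters only through these last estimates and through a duality bound with exponent $r'<\infty$.

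We write $f=\sum_j P_jf$, $g=\sum_k P_kg$ and split the double sum over $(j,k)$ into the three regimes $j\leq k-3$, $\abs{j-k}\leq 2$ and $j\geq k+3$, giving $T_b(f,g)=\Pi_{\mathrm{lh}}+\Pi_{\mathrm{hh}}+\Pi_{\mathrm{hl}}$. In the first regime $\abs{\eta}\sim\abs{\xi}$ dominates, in the third $\abs{\xi-\eta}\sim\abs{\xi}$ dominates, and in both the output of each block is frequency-localized to an annulus; in the middle regime $\abs{\xi-\eta}\sim\abs{\eta}$ while $\abs{\xi}$ can be much smaller, so the output is only localized to a ball, which is the structural reason the high--high term must be treated differently. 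On the block where $\abs{\xi-\eta}\sim 2^j$ and $\abs{\eta}\sim 2^k$ we rescale both variables by $2^{\max(j,k)}$; the hypothesis $\abs{\partial_\xi^\alpha\partial_\eta^\beta b}\lesssim(\abs{\xi}+\abs{\eta})^{-\abs{\alpha}-\abs{\beta}}$ for $\abs{\alpha}+\abs{\beta}\leq 5$ makes the rescaled symbol smooth with bounds uniform in $j,k$ on a fixed compact set, so that its Fourier coefficients $c^{j,k}_{n,m}$, $(n,m)\in\Z^2\times\Z^2$, decay rapidly and uniformly in $j,k$. Substituting back, each block becomes $\sum_{n,m}c^{j,k}_{n,m}\,\tau_n(P_jf)\,\tau_m(P_kg)$ with $\tau_n,\tau_m$ translations at scale $2^{-\max(j,k)}$, and we are reduced to estimating, for each fixed $(n,m)$, the three model paraproducts $\sum_{j\leq k-3}\tau_n P_jf\cdot\tau_m P_kg$ and the analogous sums in the other regimes, whose operator norms grow at most polynomially in $\abs{n}+\abs{m}$ --- a growth absorbed, in the final resummation, by the rapid decay of $c^{j,k}_{n,m}$ (this is the only point where the quantitative derivative bound in the statement is used).

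For the model operator underlying $\Pi_{\mathrm{lh}}$ the product $P_{<k-2}f\cdot P_kg$ has frequency support in an annulus $\sim 2^k$, so by the Littlewood--Paley characterization of $L^r$, Cauchy--Schwarz in $k$, the pointwise bound $\abs{\tau_n P_{<k-2}f}\lesssim(1+\abs{n})^{C}\mathcal{M}f$, and H\"older with $\tfrac1r=\tfrac1p+\tfrac1q$, one obtains a bound $\lesssim(1+\abs{n})^{C}\abs{\mathcal{M}f}_{L^p}\bigl\|(\sum_k\abs{P_kg}^2)^{1/2}\bigr\|_{L^q}\lesssim(1+\abs{n})^{C}\abs{f}_{L^p}\abs{g}_{L^q}$; the $\Pi_{\mathrm{hl}}$ model operator is symmetric. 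For $\Pi_{\mathrm{hh}}=\sum_j\widetilde{P}_jf\cdot\widetilde{P}_jg$ the output is not annulus-localized, so we test against $h\in L^{r'}$, move an enlarged projector onto $h$, use $\sup_j\abs{P_{\leq j+C'}h}\lesssim\mathcal{M}h$, and apply Cauchy--Schwarz in $j$ followed by H\"older with $\tfrac1p+\tfrac1q+\tfrac1{r'}=1$ and the Fefferman--Stein inequality for the two square functions of $f$ and $g$. Resumming over $(n,m)$ against the $c^{j,k}_{n,m}$ then yields $\abs{T_b(f,g)}_{L^r}\lesssim\abs{f}_{L^p}\abs{g}_{L^q}$.

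The two points where work is concentrated are: (i) making the Fourier-series expansion of the symbol uniform across the infinitely many dyadic scales, which is exactly where the derivative bound in the hypothesis is needed; and (ii) the high--high term $\Pi_{\mathrm{hh}}$, whose lack of output frequency localization forces the duality/almost-orthogonality argument in place of a direct square-function estimate. In addition, the endpoints where $p$ or $q$ equals $\infty$ are not covered verbatim by the square-function step and need a small separate argument (replacing the offending $L^\infty$ square-function bound by a direct estimate, or equivalently using the $L^\infty\times L^\infty\to\mathrm{BMO}$ mapping property and interpolating).
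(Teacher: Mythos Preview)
The paper does not prove this theorem: it is stated as the classical Coifman--Meyer multiplier theorem with citations to \cite{CM1}, \cite{CM2}, and is used as a black box throughout the weighted estimates. There is therefore no ``paper's own proof'' to compare your attempt against.

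That said, your sketch is a correct outline of the standard paraproduct proof of the Coifman--Meyer theorem: the dyadic decomposition into low--high, high--high, and high--low interactions, the Fourier-series expansion of the rescaled symbol on each block (with uniform rapid decay of the coefficients coming from the derivative hypothesis), and the reduction to model paraproducts estimated via the Littlewood--Paley square function, the maximal function, and duality for the high--high piece. This is exactly the argument one finds in the references the paper cites, so in spirit your proposal is consistent with what the paper relies on. One small caveat: the range $1<p,q,r\leq\infty$ stated here is a bit loose (the full Coifman--Meyer theory has well-known endpoint subtleties, e.g.\ $r=\infty$ lands in $\mathrm{BMO}$ rather than $L^\infty$), and your final parenthetical remark correctly flags this; but since the paper only ever uses the theorem in the interior range (typically $L^2\times L^\infty\to L^2$ or nearby), this does not affect anything in the application.
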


As noted before, the relevant setup here are estimates of the type $L^2_x\times L^\infty_x\to L^2_x$. On the large scale the idea is then to make use of the $L^\infty$ decay properties of the semigroup $e^{L_1t}$ (through the dispersive estimate \eqref{eq:disp_est}) to control the time integrals in the Duhamel term \eqref{eq:DH} and its versions.

In the case of a localization to a domain in frequency space where one of the input frequencies in \eqref{eq:DH} is much smaller than the other we shall make use of the following result, as presented in \cite{MR2775116} (and also contained in \cite{MR2559713}):
\begin{thm}\label{thm:multhm}
 Assume that for $0\leq s\leq 1$ the function $b:\R^2\times\R^2\to\R$ satisfies
 \begin{equation*}
  \sum_{j\in\Z}\abs{P_j^\eta b(\xi,\eta)}_{L^\infty_\xi \dot{H}^s_\eta}=:\abs{b}_{\mathcal{M}^s}<\infty,
 \end{equation*}
 where the Littlewood-Paley projections $P_j^\eta$ act in the variable $\eta$. Then the associated bilinear multiplier
 \begin{equation*}
  \widehat{T_b(f,g)}(\xi):=\int_{\R^2} b(\xi,\eta)\hat{f}(\xi-\eta)\hat{g}(\eta)\;d\eta
 \end{equation*}
 is continuous with norm $\abs{b}_{\mathcal{M}^s}$ as a map $T_b:L^a\times L^2\to L^{c'}$ for $a,c$ satisfying
 \begin{equation*}
  2\leq a,c\leq \frac{2}{1-s},\quad \frac{1}{a}+\frac{1}{c}=1-\frac{s}{2}
 \end{equation*}
 and $1=\frac{1}{c}+\frac{1}{c'}$.
\end{thm}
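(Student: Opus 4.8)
\emph{Proof strategy.} The plan is to decompose $b$ dyadically in the variable $\eta$, dualise, and trade the $\dot H^s_\eta$ regularity of the symbol against a negative Sobolev quantity built from the inputs via Plancherel. Since $\abs{b}_{\mathcal M^s}=\sum_{j\in\Z}\abs{P_j^\eta b}_{L^\infty_\xi\dot H^s_\eta}$, by the triangle inequality it suffices to bound each $T_{P_j^\eta b}$ with operator norm $\lesssim\abs{P_j^\eta b}_{L^\infty_\xi\dot H^s_\eta}$, uniformly in $j$; summing in $j$ then recovers $\abs{b}_{\mathcal M^s}$. For a single block the $\eta$-Fourier transform $\widehat{b(\xi,\cdot)}$ is supported in $\abs{y}\sim 2^j$ and satisfies $\abs{\widehat{b(\xi,\cdot)}}_{L^2_y}\sim 2^{-js}\abs{b(\xi,\cdot)}_{\dot H^s_\eta}$.

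Writing $\abs{T_b(f,g)}_{L^{c'}}=\sup_{\abs{h}_{L^c}\leq1}\abs{\ip{T_b(f,g),h}}$ and
\begin{equation*}
 \ip{T_b(f,g),h}=\int_{\R^2}\overline{\hat h(\xi)}\int_{\R^2}b(\xi,\eta)K_\xi(\eta)\,d\eta\,d\xi,\qquad K_\xi(\eta):=\hat f(\xi-\eta)\hat g(\eta),
\end{equation*}
one applies Plancherel in $\eta$, the support of $\widehat{b(\xi,\cdot)}$, and Cauchy--Schwarz. The factor $2^{-js}$ coming from the symbol and the factor $2^{js}$ gained by replacing the cutoff $\abs{y}\sim 2^j$ by the weight $\abs{y}^{-s}$ cancel, so that, after taking the supremum in $\xi$ and summing over $j$,
\begin{equation*}
 \abs{\ip{T_b(f,g),h}}\lesssim\abs{b}_{\mathcal M^s}\int_{\R^2}\abs{\hat h(\xi)}\,\abs{K_\xi}_{\dot H^{-s}_\eta}\,d\xi .
\end{equation*}
The matter is thus reduced to the scalar bilinear estimate $\int_{\R^2}\abs{\hat h(\xi)}\,\abs{K_\xi}_{\dot H^{-s}_\eta}\,d\xi\lesssim\abs{f}_{L^a}\abs{g}_{L^2}\abs{h}_{L^c}$.

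For this the key observation is that the $\eta$-Fourier transform of $K_\xi$ is, up to an absolute constant, the $\xi$-Fourier transform of a translate of the pointwise product: $\widehat{K_\xi}(y)$ is a multiple of $\widehat{f\cdot\tau_y g}(\xi)$ with $\tau_y g:=g(\cdot-y)$, so that $\abs{K_\xi}_{\dot H^{-s}_\eta}=\abs{\,\abs{y}^{-s}\widehat{f\cdot\tau_y g}(\xi)\,}_{L^2_y}$. Then one applies, in order: H\"older in $\xi$ together with Hausdorff--Young ($\abs{\hat h}_{L^{c'}_\xi}\leq\abs{h}_{L^c}$, using $c\geq2$); Minkowski's integral inequality ($c\geq2$) to pull the $L^c_\xi$ norm inside the $y$-integral; Hausdorff--Young in $\xi$ once more to bound $\abs{\widehat{f\cdot\tau_y g}}_{L^c_\xi}\leq\abs{f\cdot\tau_y g}_{L^{c'}}$; and the elementary identity $\abs{f\cdot\tau_y g}_{L^{c'}}^{c'}=\int_{\R^2}\abs{f(z)}^{c'}\abs{g(z-y)}^{c'}\,dz$. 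What remains is
\begin{equation*}
 \int_{\R^2}\abs{y}^{-2s}\Big(\int_{\R^2}\abs{f(z)}^{c'}\abs{g(z-y)}^{c'}\,dz\Big)^{2/c'}dy\ \lesssim\ \abs{f}_{L^a}^2\,\abs{g}_{L^2}^2 ,
\end{equation*}
which follows by estimating the weight $\abs{y}^{-2s}$ against the convolution-type integral by Young's inequality in Lorentz form (note $\abs{y}^{-2s}\in L^{1/s,\infty}(\R^2)$ for $0<s<1$), or equivalently by splitting $\abs{y}\leq1$ and $\abs{y}>1$ and interpolating the $s=0$ and $s=1$ cases. The exponent bookkeeping closes precisely because Young's condition here is $\frac1a+\frac1c=1-\frac s2$, while $c\geq2$ and $a\geq2$ are exactly what the Hausdorff--Young and Minkowski steps require.

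The main obstacle, and the reason the admissible exponents are the stated $L^2$-based range, is that $b$ is assumed \emph{only bounded in $\xi$}, with no smoothness: nothing of Coifman--Meyer or Mikhlin--H\"ormander type is available in the $\xi$ variable, so every step touching $\xi$-frequencies must go through Plancherel and Hausdorff--Young, which forces $c,a\in[2,\tfrac2{1-s}]$. The second delicate point is the interplay of the two Plancherel reductions with the $\eta$-dyadic sum: one must verify $K_\xi\in\dot H^{-s}_\eta$ and that the borderline weight $\abs{y}^{-2s}$ -- only in weak $L^{1/s}$ -- is still controlled by the Young bound on the correlation; the exact cancellation of the $2^{\pm js}$ factors across the ``symbol side'' and the ``data side'' is what singles out $\mathcal M^s$ as the correct norm. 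Finally, the genuine endpoint $s=1$, where $\abs{y}^{-2}$ fails to be locally integrable on $\R^2$ and $\dot H^{-1}(\R^2)$ is delicate, should be handled separately, e.g.\ by interpolating the estimates for $s<1$ with the trivial one.
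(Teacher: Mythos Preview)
The paper does not give its own proof of this statement: Theorem~\ref{thm:multhm} is quoted verbatim from \cite{MR2775116} (see also \cite{MR2559713}) and used as a black box in Section~\ref{sec:splitting}. So there is nothing in the paper to compare against, and your attempt must stand on its own.

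Your overall architecture --- dyadic decomposition of $b$ in $\eta$, Plancherel in $\eta$ to pass to $\check b(\xi,y)$, the identification $\widehat{K_\xi}(y)=c\,\widehat{f\cdot\tau_y g}(\xi)$, and the reduction to a weighted estimate for the correlation $y\mapsto f\cdot\tau_y g$ --- is the right one and matches the approach in the cited references.

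There is, however, a genuine gap at the Hausdorff--Young step on $h$. You write $\abs{\hat h}_{L^{c'}_\xi}\leq\abs{h}_{L^c}$ ``using $c\geq2$'', but Hausdorff--Young goes the other way: it gives $\abs{\hat h}_{L^{q}}\lesssim\abs{h}_{L^{q'}}$ only for $q\geq2$, i.e.\ it requires the \emph{input} exponent to lie in $[1,2]$. Since here $c\geq2$ forces $c'\leq2$, the inequality $\abs{\hat h}_{L^{c'}}\leq\abs{h}_{L^c}$ is simply false in general for $c>2$ (take $c=\infty$: $\abs{\hat h}_{L^1}$ is not controlled by $\abs{h}_{L^\infty}$). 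The second Hausdorff--Young application, $\abs{\widehat{f\cdot\tau_y g}}_{L^c_\xi}\leq\abs{f\cdot\tau_y g}_{L^{c'}}$, is fine because there the input exponent $c'$ is $\leq2$.

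As written, then, your argument only closes at the endpoint $c=2$ (where the offending step is just Plancherel), yielding $T_b:L^{2/(1-s)}\times L^2\to L^2$. To reach the full segment $2\leq a,c\leq\tfrac{2}{1-s}$ you still need either a duality/symmetry argument to obtain the other endpoint $a=2$, $c'=\tfrac{2}{1+s}$, and then interpolation, or a different treatment of the $\xi$-integration that does not attempt to put $\hat h$ in $L^{c'}$ with $c'<2$. This is precisely the ``$L^\infty_\xi$ only'' obstruction you correctly flag in your last paragraph, but your proposed workaround does not actually circumvent it.
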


As it turns out, the multipliers we encounter in this setup are exactly at the borderline $s=1$, $a=\infty$ and $c=c'=2$ in terms of their singularities at the origin.

\begin{example}\label{ex:bilinear}
The obvious $L^2_x$ estimate for the bilinear part of the Duhamel term just amounts to the physical space estimate $\abs{u\cdot\nabla\omega}_{L^2_x}$. We give it here starting from the Fourier expression \eqref{eq:DH} to illustrate the general procedure which will later be employed for similar terms. We note that the null form and the phase simply factor on the profiles and on the integral, so that from the perspective of the Coifman-Meyer Theorem \ref{thm:CM} we are just dealing with a bilinear multiplier with symbol $1$. Thus
\begin{equation*}
 \begin{aligned}
  &\int_0^t\int_{\R^2}e^{is\Phi}m(\xi,\eta)\hat{f}(s,\xi-\eta)\hat{f}(s,\eta)d\eta ds\\
  &\quad =\int_0^t e^{is\frac{\xi_1}{\abs{\xi}^2}}\int_{\R^2}(\xi-\eta)e^{-is\frac{\xi_1-\eta_1}{\abs{\xi-\eta}^2}}\hat{f}(s,\xi-\eta)\cdot \frac{\eta^\perp}{\abs{\eta}^2}e^{-is\frac{\eta_1}{\abs{\eta}^2}}\hat{f}(s,\eta)d\eta ds\\
  &\quad =\int_0^t  e^{is\frac{\xi_1}{\abs{\xi}^2}}\left(\int_{\R^2}\mathcal{F}(\nabla e^{L_1s}f)(s,\xi-\eta) \cdot \mathcal{F}(\nabla^\perp(1-\Delta)^{-1} e^{L_1s}f)(s,\eta) d\eta\right)ds\\
  &\quad =\int_0^t e^{is\frac{\xi_1}{\abs{\xi}^2}} \mathcal{F}\left(u\ccdot\nabla\omega(s)\right)ds
 \end{aligned}
\end{equation*}
and we conclude that
\begin{equation*}
\begin{aligned}
 &\abs{\int_0^t\int_{\R^2}e^{is\Phi}m(\xi,\eta)\hat{f}(s,\xi-\eta)\hat{f}(s,\eta)d\eta ds}_{L^2_\xi}\\
 &\quad \leq \int_0^t \abs{ e^{is\frac{\xi_1}{\abs{\xi}^2}} \mathcal{F}\left(u\ccdot\nabla\omega(s)\right)}_{L^2_\xi}ds\\
 &\quad =\int_0^t \abs{ e^{-sL_1}\left(u\ccdot\nabla\omega(s)\right)}_{L^2_x}ds\\
 &\quad =\int_0^t \abs{ u\ccdot\nabla\omega(s)}_{L^2_x}ds
\end{aligned}
\end{equation*}
by unitarity of the linear semigroup.
\end{example}

In general we will follow a similar line of reasoning, the significant difference being the presence of more complicated multipliers. As we remarked above, because of the structure of the multipliers that arise we will only follow this strategy for frequencies that are bounded by some $N>0$ (depending on time -- the details being made explicit later). Hence one needs to check that the various multipliers have no singularities in the respective localizations, for one can then see that the conditions of the Coifman-Meyer multiplier theorem are met by the homogeneity of the terms involved.

\begin{notation}\label{rmk:CM_not} For a bilinear Fourier multiplier $a(\xi,\eta)$ we shall write $$a(\xi,\eta) \lesssim_{CM} \abs{\xi}^\alpha \abs{\eta}^\beta \abs{\xi-\eta}^\gamma$$ to mean that $\frac{a(\xi,\eta)}{\abs{\xi}^\alpha \abs{\eta}^\beta \abs{\xi-\eta}^\gamma}$ satisfies the conditions of the Coifman-Meyer multiplier theorem \ref{thm:CM}, where $\alpha,\beta,\gamma\in\N$. The expression $\abs{\xi}^\alpha \abs{\eta}^\beta \abs{\xi-\eta}^\gamma$ then factors onto the profiles and/or gives derivatives in the physical space variable $x$ corresponding to $\xi$.

\emph{In most cases we only explicitly state the computations required for the verification of the Coifman-Meyer conditions up to first order. Because of the homogeneities of the involved terms the further computations needed to verify the applicability of Coifman-Meyer's theorem are tedious rather than difficult and are hence not always fully detailed.}
\end{notation}

\section{Weighted Estimate -- Reductions}\label{sec:weights_red}
The goal for the rest of the paper is to prove the following Proposition (and its precursors, Propositions \ref{prop:highfreq} and \ref{prop:lowfreq}) regarding the weighted estimate on the profile $f$ of $\omega$:
\begin{prop}\label{prop:added_derivative}
 If $\omega$ is a solution of the $\beta$-plane equation \eqref{eq:IVP} and $f(t):=e^{-L_1t}\omega(t)$ we have the bounds
  \begin{equation}\label{eq:3hiweights}
  \begin{aligned}
   \abs{P_{>N}D^3 xf(t)}_{L^2}&\lesssim \abs{P_{>N}D^3 xf_0}_{L^2}+ \int_0^t \abs{\omega(s)}_{L^\infty} \abs{D^3 xf(s)}_{L^2}ds\\
   &\hspace{-1cm} + 2^k \int_0^t \abs{\omega(s)}_{H^{4+k}}\left(\abs{\omega(s)}_{L^\infty}+\abs{u(s)}_{L^\infty}\right) \left(1+sN^{1-k}+sN^{-3-k}\right)ds\\
   &\hspace{-1cm} + 2^k\int_0^t sN^{-2k}\abs{\omega(s)}_{H^{5+k}}^2 ds
  \end{aligned}
  \end{equation}
  and
  \begin{equation}\label{eq:3weights}
     \begin{aligned}
    \abs{P_{\leq N}D^3xf(t)}_{L^2}&\lesssim \abs{P_{\leq N}D^3 xf_0}_{L^2}+\int_0^t N\abs{D^2xf(s)}_{L^2}\abs{\omega(s)}_{L^\infty}ds\\
    &\quad +\int_0^t \abs{\omega(s)}_{L^\infty} \abs{D^3xf(s)}_{L^2} ds\\
    &\quad +\int_0^t N\left(\abs{\omega(s)}_{L^\infty}+\abs{u(s)}_{L^\infty}\right)\abs{\omega(s)}_{H^1}ds\\
    &\quad +\int_0^t sN \left(\abs{u(s)}^2_{L^\infty}+\abs{u(s)}_{L^\infty}\abs{Du(s)}_{L^\infty}\right)\abs{\omega(s)}_{H^2}ds\\
    &\quad +\int_0^t \rho^{-\frac{3}{2}}N^{\rho}\abs{D^3xf(s)}_{L^2}\abs{\omega(s)}_{L^{\frac{2}{\rho}}}ds\\
    &\quad +\int_0^t \rho^{-\frac{3}{2}}N^{1+\rho}\abs{D^2xf(s)}_{L^2}\abs{\omega(s)}_{L^{\frac{2}{\rho}}}ds\\
    &\quad +\int_0^t \rho^{-\frac{3}{2}}N^{1+\rho}\left(\abs{\omega(s)}_{L^\frac{2}{\rho}}+\abs{u(s)}_{L^\frac{2}{\rho}}\right)\abs{\omega(s)}_{H^1}ds,
  \end{aligned}
  \end{equation}
 where $0<\rho\ll 1$.
\end{prop}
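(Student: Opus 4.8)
The plan is to establish the estimate for three derivatives, \eqref{eq:3hiweights} and \eqref{eq:3weights}, by building on the two-derivative estimates (Propositions \ref{prop:highfreq} and \ref{prop:lowfreq}, whose proof follows the reductions of this section) and tracking how an extra factor of $|\xi|$ propagates through the three main Duhamel terms \eqref{eq:hard_piece}, \eqref{eq:dmult_term}, \eqref{eq:direct_weight}. Starting from the Duhamel formula \eqref{eq:DH} for $\hat f$, I would apply $|\xi|^3\partial_\xi$ inside the time integral and split according to where $\partial_\xi$ lands: on the phase (giving \eqref{eq:hard_piece} with the troublesome extra $s$), on the multiplier $m$ (giving \eqref{eq:dmult_term}), or on a profile $\hat f(\xi-\eta)$ (giving \eqref{eq:direct_weight}). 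The first reduction, carried out in this section, is to localize: high output frequencies $P_{>N}$ are handled purely by the energy estimate (Lemma \ref{lem:energy_ineq}) together with the gain \eqref{eq:hfreqgain}, which is where the $2^k sN^{1-k}$, $sN^{-3-k}$ and $sN^{-2k}$ terms in \eqref{eq:3hiweights} originate; the remaining work is for $P_{\leq N}$.

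For the $P_{\leq N}$ piece I would further reduce, as announced in Section \ref{sec:simpl}, to the regime where $|\xi|$ and $|\xi-\eta|$ are comparable (the null structure kills the parallel-frequency interactions and, via the divergence structure remarked on in Section \ref{sec:div_structure}, prevents any genuine loss of derivatives on the weight). Then: term \eqref{eq:direct_weight} returns the weight $\abs{D^3xf}_{L^2}$ against $\abs{\omega}_{L^\infty}$ by a Coifman--Meyer bilinear estimate of type $L^2_x\times L^\infty_x\to L^2_x$ (this is the $\int_0^t\abs{\omega(s)}_{L^\infty}\abs{D^3xf(s)}_{L^2}\,ds$ term, identical to the two-derivative case); term \eqref{eq:dmult_term} is absorbed into the energy norms $\abs{\omega}_{H^{1}}$, $\abs{\omega}_{H^2}$ etc., producing the $N$- and $sN$-weighted energy terms in \eqref{eq:3weights}; and the redistribution of the extra $|\xi|$ across a $P_{\leq N}$ localization costs at most a factor $N$, which is exactly why \eqref{eq:3weights} carries $N\abs{D^2xf}_{L^2}$ and $N^{1+\rho}\abs{D^2xf}_{L^2}$ in place of the $\abs{D^2xf}_{L^2}$ appearing in the two-derivative Proposition \ref{prop:lowfreq}. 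This is the sense in which the three-derivative bound "is deduced from" the two-derivative bound: every place where two derivatives gave back $\abs{D^2xf}_{L^2}$ or $\abs{D^2xf}_{L^2}$ against some $L^{2/\rho}$ factor, the third derivative either gives back $\abs{D^3xf}_{L^2}$ (when it falls on the weighted profile) or gives $N$ times the two-derivative quantity (when it has to be moved off by a frequency localization), hence the $N^{\rho}$, $N^{1+\rho}$ factors and the $\rho^{-3/2}$ constants inherited from the $\dot B^3_{1,1}$ dispersive estimate \eqref{eq:disp_est} applied at integrability $2/\rho$.

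The main obstacle, just as flagged in Section \ref{sec:difficulties}, is term \eqref{eq:hard_piece}: after $\partial_\xi$ hits the phase we have an integrand of the form $is|\xi|^3\partial_\xi\Phi\, e^{is\Phi}m\,\hat f(\xi-\eta)\hat f(\eta)$ with a growing factor $s$. Here one cannot compute the time-resonant set $\mathcal T$ explicitly, so the argument must proceed by the resonance analysis of Section \ref{sec:res}: away from the space-resonant set $\mathcal S=\{\xi=2\eta\}$ one integrates by parts in $\eta$ using $\nabla_\eta\Phi\neq 0$ to convert the $s$ into derivatives hitting the profiles and the (anisotropic, non-classical) multipliers, controlled via Theorem \ref{thm:multhm} at the borderline $s=1$, $a=\infty$, $c=c'=2$; near $\mathcal S$ one instead localizes in frequency and pays the $N^\rho s^\rho$ (or $s^{\rho(1+1/k)}$ after the choice $N=2s^{1/k}$) loss, which is acceptable because $\rho$ can be taken arbitrarily small. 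The extra $|\xi|$ relative to the two-derivative case is harmless inside $P_{\leq N}$ precisely because it can be traded for a factor of $N$ or moved onto the weighted profile; so once the two-derivative version of \eqref{eq:hard_piece} is in hand (Proposition \ref{prop:lowfreq}), the three-derivative version \eqref{eq:3weights} follows by this bookkeeping, giving the $sN(\abs{u}^2_{L^\infty}+\abs{u}_{L^\infty}\abs{Du}_{L^\infty})\abs{\omega}_{H^2}$ term and the $N^{1+\rho}$ terms. Assembling the high- and low-frequency pieces and the contribution of the data $\hat f_0$ yields \eqref{eq:3hiweights}--\eqref{eq:3weights}.
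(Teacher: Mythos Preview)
Your overall strategy matches the paper's: one first proves the two-derivative estimates (Propositions \ref{prop:highfreq} and \ref{prop:lowfreq}) and then, as in Section \ref{sec:add_deriv}, obtains the three-derivative version by revisiting every term with an extra factor $|\xi|$ and using the product rule, so that the new factor either lands on the weighted profile (yielding $\abs{D^3xf}$), goes to energy, or---in Region 3---stays on the other profile and gives back only $\abs{D^2xf}$ with an extra $N$ (Remark \ref{rem:lossterm}). That high-level picture is right.

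However, two pieces of your bookkeeping are misattributed and would not survive a careful write-up. First, the constants $\rho^{-3/2}$ and the factors $N^\rho$, $N^{1+\rho}$ do \emph{not} come from the dispersive estimate \eqref{eq:disp_est}; they come from the bilinear multiplier Theorem \ref{thm:multhm} applied in Region 3 (where $|\eta|\ll|\xi|$, so the symbols are not of Coifman--Meyer type). There one takes $s=1-\rho$ and obtains $\abs{b}_{\mathcal M^{1-\rho}}\lesssim \rho^{-3/2}N^\rho$, which is the origin of the $L^2\times L^{2/\rho}\to L^2$ mapping and of those constants. Second, your geography of the resonance analysis is inverted: the $N^\rho$ loss arises in Region 3, which is \emph{far} from the space-resonant set $\mathcal S=\{\xi=2\eta\}$; near $\mathcal S$ one is in Region 1, Case 2, and the paper does not simply ``localize and pay a loss'' but instead splits anisotropically into Subcase A (where $|\Phi|\gtrsim|\xi_1|/|\eta|^2$ and one integrates by parts in time) and Subcase B (where one integrates by parts in the single good direction $\eta_2$, Lemma \ref{lem:C2SC2}). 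Both subcases use Coifman--Meyer, not Theorem \ref{thm:multhm}. If you repair these two points, your sketch lines up with the paper's argument.
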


\begin{proof}[Outline of the Proofs of Propositions \ref{prop:added_derivative}-\ref{prop:lowfreq}]
At first we focus on proving estimates for two derivatives with a weight on the profile function. This is the main difficulty, from which the case of an additional third derivative on the weighted profile function follows -- we explain this at the very end, in Section \ref{sec:add_deriv}.

After a useful observation in Section \ref{sec:simpl}, for clarity we present the estimates for simpler terms \eqref{eq:dmult_term} and \eqref{eq:direct_weight} (Sections \ref{sec:dmult_term} and \ref{sec:direct_weight}, respectively), which we will rely on later.

Subsequently we will treat the cases of ``high'' and ``low'' frequencies separately. The idea is that when restricting -- in a time-dependent manner -- to high enough frequencies we can compensate for faster time growth of some of the terms through the boundedness of many derivatives through the Sobolev embedding -- this is the content of Section \ref{sec:highfreq} and includes the proof of Proposition \ref{prop:highfreq}. In particular, no further manipulations will then be necessary and all multipliers directly factor onto the various terms, so that no special multiplier theorems are needed.

We have thus reduced to the study of the term \eqref{eq:hard_piece} in the regime of bounded ``low'' frequencies: This is the focuse of Section \ref{sec:res}. Here we have to make use of the resonance and null structures in the equation to avoid too much growth in time. This leads us to split up Fourier space into several regions, where distinct resonance effects dominate. The variety of multipliers thus produced (essentially by integration by parts) satisfy the requirements of one of the multiplier theorems cited above in Remark \ref{sec:bilinear_remark} due to our localizations and our having avoided singularities of the multipliers involved. Due to the homogeneities of their constituent pieces, the conditions in the various localizations will not be too difficult to check.

Before giving the full details in exactly the context we need them in, we lay out the main ideas: For low frequencies we will combine versions of a space resonance approach (Section \ref{sec:direct_approach}) as well as time resonance methods (Section \ref{sec:time_res}) for different regions of frequency space. The details of the splitting of Fourier space into appropriate subdomains and the corresponding estimates are then given in Section \ref{sec:splitting}.

The results are collected in Propositions \ref{prop:highfreq} and \ref{prop:lowfreq}, which give bounds for high and low frequencies, respectively, with two derivatives and a weight on the profile. Finally, Proposition \ref{prop:added_derivative} contains the estimates for three derivatives and a weight on the profile. This concludes the proofs for the weighted estimates.
\end{proof}

\subsection{A First Observation}\label{sec:simpl}
We remark that due to the invariance of $\Phi$ under a change of variables $\eta\leftrightarrow\xi-\eta$ we can always arrange that $\abs{\eta}\lesssim\abs{\xi-\eta}$ (which entails $\abs{\xi}\sim\abs{\xi-\eta}$). Indeed, let $\zeta:\R^+\to\R$ be a smooth cutoff function with $\supp(\zeta)\subset [0,2]$ and $\zeta|_{[0,1]}=1$. Now we split the space integral in the Duhamel formula \eqref{eq:DH} by writing
\begin{equation*}
\begin{aligned}
 \int_{\R^2}e^{is\Phi}m(\xi,\eta)\hat{f}(\xi-\eta)\hat{f}(\eta)d\eta &= \int_{\R^2}e^{is\Phi}m(\xi,\eta)(1-\zeta)\left(\frac{\abs{\xi-\eta}}{\abs{\eta}}\right)\hat{f}(\xi-\eta)\hat{f}(\eta)d\eta\\
 &\quad+ \int_{\R^2}e^{is\Phi}m(\xi,\eta)\zeta\left(\frac{\abs{\xi-\eta}}{\abs{\eta}}\right)\hat{f}(\xi-\eta)\hat{f}(\eta)d\eta.
\end{aligned}
\end{equation*}
In the first of these terms we have $\abs{\eta}\lesssim \abs{\xi-\eta}$, hence $\abs{\xi}\lesssim\abs{\xi-\eta}$ as suggested. In contrast, for the second term $\abs{\xi-\eta}\lesssim\abs{\eta}$, leading us to change variables to $\sigma:=\xi-\eta$ in order to get
\begin{equation*}
 \int_{\R^2}e^{is\Phi}m(\xi,\xi-\sigma)\zeta\left(\frac{\abs{\sigma}}{\abs{\xi-\sigma}}\right)\hat{f}(\sigma)\hat{f}(\xi-\sigma)d\sigma,
\end{equation*}
since $\Phi(\xi,\eta)=\Phi(\xi,\xi-\eta)=\Phi(\xi,\sigma)$. Here we see that $\abs{\sigma}\lesssim\abs{\xi-\sigma}$ as claimed. 

\textbf{Conclusion:} In our considerations for the weighted estimate, through
\begin{equation}\label{eq:simpl}
 \int_{\R^2}e^{is\Phi}m(\xi,\eta)\hat{f}(\xi-\eta)\hat{f}(\eta)d\eta\sim \int_{\R^2}e^{is\Phi}\left(m+\bar{m}\right)(\xi,\eta)\hat{f}(\xi-\eta)\hat{f}(\eta) \chi_{\{\abs{\eta}\lesssim\abs{\xi-\eta}\}}d\eta
\end{equation}
we may reduce the Duhamel integral in \eqref{eq:DH} to two terms with null forms $m$ or $\bar{m}$ and localization $\abs{\eta}\lesssim\abs{\xi-\eta}$.

\emph{In the rest of the article we shall make use of this localization without writing it explicitly every time it appears. We note that in this setting $\bar{m}$ has a milder singularity than $m$, so often it will be enough to deal with $m$.}

\subsection{Estimate for \eqref{eq:dmult_term}}\label{sec:dmult_term}
Since there are no derivatives on the profiles and all multipliers factor (as described above), here it will suffice to invoke the energy estimate \eqref{eq:energy_ineq} and the dispersion. Up to a Riesz transform this gives
\begin{equation*}
 \begin{aligned}
  &\abs{ \int_0^t\int_{\R^2}e^{is\Phi}\abs{\xi}^2\partial_\xi m(\xi,\eta)\hat{f}(\xi-\eta)\hat{f}(\eta)d\eta ds}_{L^2_\xi}\\
  &\quad=\abs{\int_0^t\mathcal{F}^{-1}\left( \int_{\R^2}e^{is\Phi}\abs{\xi}^2\partial_\xi m(\xi,\eta)\hat{f}(\xi-\eta)\hat{f}(\eta)d\eta \right)ds}_{L^2_x}\\
  &\quad\lesssim \int_0^t\abs{u(s)}_{L^\infty}\abs{\omega(s)}_{H^2}ds,
 \end{aligned}
\end{equation*}
and similarly for the term with $\bar{m}$.

\subsection{Estimate for \eqref{eq:direct_weight}}\label{sec:direct_weight}
The idea here is that through \eqref{eq:simpl} in Section \ref{sec:simpl} we have arranged that the derivative $\partial_\xi$ falls on the high-frequency term, which allows $\abs{\xi}$ to be grouped with this derivative in order to close the estimate. The details are as follows.

Writing $\abs{\xi}^2=\left(\abs{\xi}^2-\abs{\xi-\eta}^2\right)+\abs{\xi-\eta}^2$ we first point out that by the divergence structure of the equation (Remark \ref{sec:div_structure}) the term with $m(\xi,\eta)\abs{\xi-\eta}^2 \partial_\xi\hat{f}(\xi-\eta)$ vanishes. Moreover, $\abs{\xi}^2-\abs{\xi-\eta}^2=2(\xi-\eta)\cdot\eta+\abs{\eta}^2$, hence 
\begin{equation*}
 \left(\abs{\xi}^2-\abs{\xi-\eta}^2\right)m(\xi,\eta)=2(\xi-\eta)\cdot\eta\frac{(\xi-\eta)\cdot\eta^\perp}{\abs{\eta}^2}+(\xi-\eta)\cdot\eta^\perp \lesssim_{CM}\abs{\xi-\eta}^2.
\end{equation*}
When dealing with $\bar{m}$ it suffices to notice that $\abs{\xi}\sim\abs{\xi-\eta}$.

Altogether we can thus bound two derivatives on \eqref{eq:direct_weight} by
\begin{equation*}
 \abs{\abs{\xi}^2\int_0^t\int_{\R^2}e^{is\Phi}m(\xi,\eta)\partial_\xi\hat{f}(\xi-\eta)\hat{f}(\eta)d\eta ds}\lesssim \int_0^t\abs{\omega(s)}_{L^\infty}\abs{\abs{\xi}^2\partial_\xi\hat{f}(s,\xi)}_{L_\xi^2} ds.
\end{equation*}

\subsection{High Frequency Estimates}\label{sec:highfreq}
For high frequencies we can make use of the energy estimates \eqref{eq:energy_ineq} to control the weight without resorting to any resonance analysis.

Recall that we denote by $P_{>N}$ the projection onto frequencies larger than $N$ in Fourier space (given by a smooth Fourier multiplier $\varphi_{>N}(\xi)$ with value 1 on frequencies larger than $N$ and supported on frequencies larger than $N/2$). The key point now is the following: By the Sobolev embedding we have for any $k\in\N$
\begin{equation}\label{eq:hfreqgain}
 \abs{P_{>N}f}_{L^\infty}\lesssim N^{-k} \abs{D^k P_{>N}f}_{L^\infty}\lesssim N^{-k}\abs{P_{>N}f}_{H^{3+k}}\leq N^{-k}\abs{f}_{H^{3+k}},
\end{equation}
so in particular for $l$ derivatives 
\begin{equation}\label{eq:hfreqgain2}
\begin{aligned}
 \abs{D^l f}_{L^\infty}&=\abs{P_{>N}D^l f+(1-P_{>N})D^l f}_{L^\infty}\\
 &\lesssim\abs{P_{>N}D^lf}_{L^\infty}+\abs{(1-P_{>N})D^lf}_{L^\infty}\\
 &\lesssim N^{-k}\abs{f}_{H^{3+k+l}}+N^l\abs{(1-P_{>N})f}_{L^\infty}\\
 &\lesssim N^{-k}\abs{f}_{H^{3+k+l}}+N^l\abs{f}_{L^\infty}.
\end{aligned}
\end{equation}
The moral is that by suitably splitting Fourier space we can assume the high frequencies to be small.

To estimate $\abs{\varphi_{>N}(\xi)\abs{\xi}^2\partial_\xi\hat{f}}_{L^2}$ we recall from Section \ref{sec:simpl} that by changing variables we may assume that $\abs{\xi-\eta}\sim\abs{\xi}> N$, so that it is enough to bound the term
\begin{equation*}
 \abs{\varphi_{>N}(\xi)\abs{\xi}^2\partial_\xi\int_0^t\int_{\R^2}e^{is\Phi}m(\xi,\eta)\widehat{P_{>N}f}(\xi-\eta)\hat{f}(\eta)d\eta ds}_{L^2_\xi}. 
\end{equation*}

We then need to control the same three types of terms as in \ref{sec:DHTerm}, now with an additional factor of $\varphi_{>N}$. As before, two of those are easily dealt with: When the weight $\partial_\xi$ falls on the null form $m$ or $\bar{m}$ we proceed as in \ref{sec:dmult_term}, directly invoking the energy estimates. In the worst case this yields
\begin{equation*}
 \begin{aligned}
  &\abs{ \varphi_{>N}(\xi)\int_0^t\int_{\R^2}e^{is\Phi}\abs{\xi}^2\partial_\xi m(\xi,\eta)\widehat{P_{>N}f}(\xi-\eta)\hat{f}(\eta)d\eta ds}_{L^2_\xi}\\
  &\quad\lesssim \int_0^t \abs{P_{>N}D^2 \omega(s)}_{L^2}\abs{u(s)}_{L^\infty}ds.
 \end{aligned}
\end{equation*}

Secondly, the case where the weight lands on the profile directly is dealt with as outlined in Section \ref{sec:direct_weight}.

On the other hand, term \eqref{eq:hard_piece} (when the weight hits the phase function) requires some elaboration: 
\subsubsection*{The Weight on the Phase Function}\label{sec:highfreqhard}
As mentioned above, for high frequencies we aim to control the time growth from the additional factor $s$ simply by energy estimates. A direct computation yields
\begin{equation*}
\begin{aligned}
 \partial_{\xi_1}\Phi&=\frac{\xi_2^2-\xi_1^2}{\abs{\xi}^4}-\frac{(\xi_2-\eta_2)^2-(\xi_1-\eta_1)^2}{\abs{\xi-\eta}^4},\\
 \partial_{\xi_2}\Phi&=\frac{2\xi_1\xi_2}{\abs{\xi}^4}-\frac{2(\xi_1-\eta_1)(\xi_2-\eta_2)}{\abs{\xi-\eta}^4},
\end{aligned}
\end{equation*}
so that the operator given by the symbol $\abs{\xi}^2\partial_\xi\Phi$ can be written as a sum of Riesz transforms in $x$ on the one hand, and of terms with derivatives of order $2$ in $\xi$ and order $-2$ in the variable $\xi-\eta$ on the other hand. Taking into account the null form as it factors onto the two profiles we thus obtain the bound 
\begin{equation*}
\begin{aligned}
 &\abs{\varphi_{>N}(\xi)\int_0^t\int_{\R^2}is\abs{\xi}^2\partial_\xi\Phi e^{is\Phi}m(\xi,\eta)\widehat{P_{>N}f}(\xi-\eta)\hat{f}(\eta)d\eta ds}_{L^2}\\
 &\quad\lesssim\int_0^t s \abs{P_{>N} D\omega}_{L^2}\abs{u}_{L^\infty}+ s \abs{D^2\left(e^{L_1t}(D^{-1}P_{>N}f)\; (e^{L_1t}D^{-1}f)\right)}_{L^2}\\
 &\hspace{3.5cm} +s \abs{D^2\left(e^{L_1t}(D^{-3}P_{>N}f)\; (e^{L_1t}Df)\right)}_{L^2} ds.
\end{aligned}
\end{equation*}

We now employ \eqref{eq:hfreqgain2} to deduce that for the worst case
\begin{equation*}
 \begin{aligned}
  \abs{D^2\left(e^{L_1t}(D^{-3}P_{>N}f)\; (e^{L_1t}Df)\right)}_{L^2}&\lesssim \abs{e^{L_1t}D^{-1} P_{>N}f}_{L^2}\abs{e^{L_1t}Df}_{L^\infty}\\
  &\quad+\abs{e^{L_1t}D^{-3}P_{>N}f}_{L^2}\abs{e^{L_1t}D^3f}_{L^\infty}\\
  &\hspace{-3.5cm}\lesssim \abs{P_{>N} u}_{L^2}\abs{D\omega}_{L^\infty}+\abs{D^{-3}P_{>N}\omega}_{L^\infty}\abs{D^3\omega}_{L^2}.
 \end{aligned}
\end{equation*}

\begin{prop}\label{prop:highfreq}
The high-frequency piece of the profile with weight and derivatives satisfies
  \begin{equation}\label{eq:2hiweights}
  \begin{aligned}
   \abs{P_{>N}D^2 xf(t)}_{L^2}&\lesssim \abs{P_{>N}D^2 xf_0}_{L^2}+ \int_0^t \abs{\omega(s)}_{L^\infty} \abs{D^2 xf(s)}_{L^2}ds\\
   &\hspace{-1cm} + 2^k \int_0^t \abs{\omega(s)}_{H^{3+k}}\left(\abs{\omega(s)}_{L^\infty}+\abs{u(s)}_{L^\infty}\right) \left(1+sN^{1-k}+sN^{-3-k}\right)ds\\
   &\hspace{-1cm} + 2^k\int_0^t sN^{-2k}\abs{\omega(s)}_{H^{4+k}}^2 ds.
  \end{aligned}
  \end{equation}
\end{prop}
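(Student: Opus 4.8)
The plan is to bound the weighted quantity $\abs{P_{>N}D^2 xf(t)}_{L^2}$ by first writing it through the Duhamel formula \eqref{eq:DH}, so that the time derivative of $\abs{\xi}^2\partial_\xi\widehat{P_{>N}f}$ is expressed via the three terms \eqref{eq:hard_piece}, \eqref{eq:dmult_term}, \eqref{eq:direct_weight} localized by $\varphi_{>N}$. As noted in Section \ref{sec:simpl}, one may insert the cutoff $\chi_{\{\abs{\eta}\lesssim\abs{\xi-\eta}\}}$, so that on the support of the integrand $\abs{\xi}\sim\abs{\xi-\eta}> N$; in particular the derivative $\partial_\xi$ always lands on the high-frequency profile $\widehat{P_{>N}f}(\xi-\eta)$ in the term \eqref{eq:direct_weight}, and all symbols that arise are homogeneous and non-singular on this frequency region. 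Integrating in time and taking $L^2_\xi$ norms converts each of the three Duhamel terms into a time integral of a bilinear $L^2_x\times L^\infty_x\to L^2_x$ estimate, which after Plancherel and unitarity of $e^{-L_1t}$ (cf.\ Example \ref{ex:bilinear}) reduces to physical-space estimates of products of (Riesz transforms of) $\omega$, $u$ and $D^2xf$.

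The three terms are handled as indicated in Sections \ref{sec:dmult_term}, \ref{sec:direct_weight} and \ref{sec:highfreqhard}, respectively. For \eqref{eq:dmult_term} (weight on the multiplier), the symbol $\abs{\xi}^2\partial_\xi m$ is a sum of Riesz transforms, so Plancherel gives directly the contribution $\int_0^t \abs{P_{>N}D^2\omega(s)}_{L^2}\abs{u(s)}_{L^\infty}\,ds$, and since $\abs{P_{>N}D^2\omega}_{L^2}\lesssim\abs{\omega}_{H^{3+k}}$ (indeed $P_{>N}$ gains $N^{-k}$, but we may also just keep it crudely) this is absorbed into the $(1+sN^{1-k}+sN^{-3-k})$-term with $s$-factor set to the constant $1$. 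For \eqref{eq:direct_weight} (weight directly on a profile), we use the divergence-structure cancellation of Section \ref{sec:div_structure}: writing $\abs{\xi}^2=(\abs{\xi}^2-\abs{\xi-\eta}^2)+\abs{\xi-\eta}^2$, the piece with $\abs{\xi-\eta}^2 m(\xi,\eta)\partial_\xi\hat f(\xi-\eta)$ corresponds to $\ip{u\cdot\nabla(e^{L_1s}D^2xf),e^{L_1s}D^2xf}=0$ and drops, while $(\abs{\xi}^2-\abs{\xi-\eta}^2)m\lesssim_{CM}\abs{\xi-\eta}^2$ is a Coifman--Meyer symbol factoring onto $\partial_\xi\hat f(\xi-\eta)$ and $\hat f(\eta)$, yielding the term $\int_0^t\abs{\omega(s)}_{L^\infty}\abs{D^2xf(s)}_{L^2}\,ds$. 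The main work is in \eqref{eq:hard_piece}: differentiating the phase produces $is\,\abs{\xi}^2\partial_\xi\Phi\, e^{is\Phi}m$, and using the explicit formulas for $\partial_{\xi}\Phi$ one writes $\abs{\xi}^2\partial_\xi\Phi$ as a sum of Riesz transforms plus terms of order $2$ in $\xi$ and order $-2$ in $\xi-\eta$. Combining with the null form $m$ (which factors as $\nabla$ on one profile and $\nabla^\perp(-\Delta)^{-1}$ on the other, up to the harmless localization) and with $\abs{\xi}\sim\abs{\xi-\eta}$, these produce the bilinear expressions $s\abs{P_{>N}D\omega}_{L^2}\abs{u}_{L^\infty}$, $s\abs{D^2(e^{L_1s}(D^{-1}P_{>N}f)\,e^{L_1s}(D^{-1}f))}_{L^2}$ and $s\abs{D^2(e^{L_1s}(D^{-3}P_{>N}f)\,e^{L_1s}(Df))}_{L^2}$. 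Expanding the $D^2$ by Leibniz and applying \eqref{eq:hfreqgain2} to trade the $P_{>N}$ projection for a gain of $N^{-k}$ at the cost of $k$ extra derivatives in an $H^{k}$-type norm, the first of these contributes $\lesssim\int_0^t sN^{1-k}\abs{\omega(s)}_{H^{3+k}}(\abs{\omega(s)}_{L^\infty}+\abs{u(s)}_{L^\infty})\,ds$ (here the $L^\infty$ factor on the low-frequency input uses $\abs{D\omega}_{L^\infty}\lesssim N\abs{\omega}_{L^\infty}+N^{-k}\abs{\omega}_{H^{\cdots}}$, whose second part is the source of the $sN^{-2k}\abs{\omega}_{H^{4+k}}^2$ line), and the $D^{-3}$-type term contributes $\lesssim\int_0^t sN^{-3-k}\abs{\omega(s)}_{H^{3+k}}(\abs{\omega(s)}_{L^\infty}+\abs{u(s)}_{L^\infty})\,ds$ together with a second copy of the $sN^{-2k}\abs{\omega(s)}_{H^{4+k}}^2$ term (coming from the $\abs{D^{-3}P_{>N}\omega}_{L^\infty}\abs{D^3\omega}_{L^2}$ piece, where one more application of \eqref{eq:hfreqgain} is used).

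Collecting the initial-data term $\abs{P_{>N}D^2xf_0}_{L^2}$ from the first term of \eqref{eq:DH} and all of the above contributions, and noting the $2^k$ factor that accompanies the energy bilinear estimates (exactly as in the $2^{2k}$ constant of Lemma \ref{lem:energy_ineq}, arising from the number of Leibniz terms produced by $D^2$ together with the Sobolev embedding constants), yields precisely \eqref{eq:2hiweights}. The step I expect to be the main obstacle is the bookkeeping in \eqref{eq:hard_piece}: one must carefully check that, on the frequency support $\abs{\xi}\sim\abs{\xi-\eta}>N$ with $\abs{\eta}\lesssim\abs{\xi-\eta}$, the operator $\abs{\xi}^2\partial_\xi\Phi$ genuinely decomposes into Riesz transforms plus a $+2/-2$ homogeneous piece with \emph{no} singularity surviving (the apparent $\abs{\xi-\eta}^{-4}$ and $\abs{\eta}^{-2}$ denominators being tamed by the null form and the localization), so that after Leibniz the $L^\infty$ factors only ever fall on the \emph{low}-frequency profile $\hat f(\eta)$ and each application of \eqref{eq:hfreqgain2} is legitimate; once this is in place, the time integration is left explicit (the $s$ is not integrated) and the estimate is proved.
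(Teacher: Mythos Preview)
Your proposal is correct and follows essentially the same route as the paper: split the weighted Duhamel term into the three pieces \eqref{eq:hard_piece}, \eqref{eq:dmult_term}, \eqref{eq:direct_weight}, handle the latter two exactly as in Sections \ref{sec:dmult_term}--\ref{sec:direct_weight}, and for the hard piece use the explicit decomposition of $\abs{\xi}^2\partial_\xi\Phi$ followed by the high-frequency gain \eqref{eq:hfreqgain}--\eqref{eq:hfreqgain2} on the $P_{>N}$ factors. One minor point: the $2^k$ constant in \eqref{eq:2hiweights} is not really produced by Leibniz on $D^2$ as you suggest, but rather enters through the paper's (somewhat generous) tracking of constants in the Bernstein-type bound $\abs{P_{>N}g}_{L^2}\lesssim 2^k N^{-k}\abs{g}_{H^k}$ and in the Sobolev step of \eqref{eq:hfreqgain}; this does not affect the argument.
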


\begin{proof}
 Note that also here we keep track of the dependence of our estimates on $k$ -- all implicit constants are independent of it.

 Adding up the various pieces of the estimates from sections \ref{sec:dmult_term}, \ref{sec:direct_weight} and the present \ref{sec:highfreq} we obtain
  \begin{equation}
  \begin{aligned}
   \abs{P_{>N}D^2 xf(t)}_{L^2}&\lesssim \abs{P_{>N}D^2 xf_0}_{L^2}+\int_0^t \abs{\omega(s)}_{L^\infty} \abs{D^2 xf(s)}_{L^2}ds\\
    &\quad+ \int_0^t \abs{u(s)}_{L^\infty}\abs{\omega(s)}_{H^2}ds\\
    &\quad+ \int_0^t s\abs{P_{>N}u(s)}_{L^2}\abs{D\omega(s)}_{L^\infty}ds\\
    &\quad+ \int_0^t s\abs{D^{-3}P_{>N}\omega(s)}_{L^\infty}\abs{D^3\omega(s)}_{L^2}ds.
  \end{aligned}
  \end{equation}
 Now we invoke the splitting into high and low frequencies as in \eqref{eq:hfreqgain2} (with $l=1$) when we have a derivative on $\omega$ in $L^\infty$, and in the same spirit we use that $\abs{P_{>N}f}_{L^2}\lesssim 2^k N^{-k}\abs{f}_{H^k}$. This yields
  \begin{equation*}
  \begin{aligned}
   \abs{P_{>N}D^2 xf(t)}_{L^2}&\lesssim \abs{P_{>N}D^2 xf_0}_{L^2} + \int_0^t \abs{\omega(s)}_{L^\infty} \abs{D^2 xf(s)}_{L^2}ds\\
    &\quad+ \int_0^t \abs{u(s)}_{L^\infty}\abs{\omega(s)}_{H^2}ds\\
    &\quad+ 2^k\int_0^t sN^{-k}\abs{\omega(s)}_{H^{k}}\left(N^{-k}\abs{\omega(s)}_{H^{4+k}}+N\abs{\omega(s)}_{L^\infty}\right)ds\\
    &\quad+ 2^k\int_0^t sN^{-3}\abs{\omega(s)}_{L^\infty}N^{-k}\abs{\omega(s)}_{H^{3+k}}ds,
  \end{aligned}
  \end{equation*}
 from which \eqref{eq:2hiweights} follows.
\end{proof}

\subsection{Reduction to Low Frequencies}\label{sec:redlowfreq}
In the previous section we used that if $\xi$ is large, then either $\xi-\eta$ or $\eta$ has to be large as well, so we can gain decay through Sobolev embedding. If on the other hand now $\xi$ is small but $\eta$ large, the same estimates go through.

\textbf{Conclusion:} In what follows we may assume that all frequencies are bounded by some $N>0$, the time-dependence of which will be made explicit later. This means that we can reduce to a Duhamel formula of the type
 \begin{equation*}
  \widehat{P_{\leq N}f}(t,\xi)=\widehat{P_{\leq N}f_0}(\xi)+\int_0^t\int_{\R^2}e^{is\Phi}m(\xi,\eta)\widehat{P_{\leq N}f}(s,\xi-\eta)\widehat{P_{\leq N}f}(s,\eta)d\eta ds.
 \end{equation*}

For such a term we will then be able to show the following proposition:
\begin{prop}\label{prop:lowfreq}
 The low-frequency piece of the profile with weight and two derivatives satisfies
   \begin{equation}\label{eq:2weights}
  \begin{aligned}
    \abs{P_{\leq N}D^2xf(t)}_{L^2}&\lesssim \abs{D^2 xf_0}_{L^2}+\int_0^t \abs{D^2xf(s)}_{L^2}\abs{\omega(s)}_{L^\infty}ds\\
    &\quad +\int_0^t \left(\abs{\omega(s)}_{L^\infty}+\abs{u(s)}_{L^\infty}\right)\abs{\omega(s)}_{H^1}ds\\
    &\quad +\int_0^t s \left(\abs{u(s)}^2_{L^\infty}+\abs{u(s)}_{L^\infty}\abs{Du(s)}_{L^\infty}\right)\abs{\omega(s)}_{H^2}ds\\
    &\quad +\int_0^t \rho^{-\frac{3}{2}}N^{\rho}\abs{D^2xf(s)}_{L^2}\abs{\omega(s)}_{L^{\frac{2}{\rho}}}ds\\
    &\quad +\int_0^t \rho^{-\frac{3}{2}}N^{\rho}\left(\abs{\omega(s)}_{L^\frac{2}{\rho}}+\abs{u(s)}_{L^\frac{2}{\rho}}\right)\abs{\omega(s)}_{H^1}ds,
  \end{aligned}
 \end{equation}
 where all frequencies on the right-hand side may also be assumed to be bounded by $N>0$ and $0<\rho\ll 1$ (to be chosen later).
\end{prop}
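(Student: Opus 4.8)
The plan is to estimate the weighted Duhamel integral $\abs{\xi}^2\partial_\xi$ applied to the bilinear term in \eqref{eq:DH}, restricted to the regime where all frequencies are bounded by $N$ (as justified in Section \ref{sec:redlowfreq}) and, via \eqref{eq:simpl}, to the localization $\abs{\eta}\lesssim\abs{\xi-\eta}$ with null form $m$ or $\bar m$. After distributing the derivative $\partial_\xi$, we obtain the three families of terms \eqref{eq:hard_piece}, \eqref{eq:dmult_term}, \eqref{eq:direct_weight}. The terms \eqref{eq:dmult_term} and \eqref{eq:direct_weight} are handled exactly as in Sections \ref{sec:dmult_term} and \ref{sec:direct_weight}: \eqref{eq:dmult_term} gives $\int_0^t \abs{u(s)}_{L^\infty}\abs{\omega(s)}_{H^2}ds$ after invoking the energy estimate, while \eqref{eq:direct_weight} gives back the weight $\int_0^t\abs{\omega(s)}_{L^\infty}\abs{D^2xf(s)}_{L^2}ds$ using that $\abs{\xi}\sim\abs{\xi-\eta}$ and the divergence structure of Remark \ref{sec:div_structure}. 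These account for the first three integrals on the right-hand side of \eqref{eq:2weights}.

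The bulk of the work is term \eqref{eq:hard_piece}, where $\partial_\xi(e^{is\Phi})=is\,\partial_\xi\Phi\, e^{is\Phi}$ produces the dangerous factor $s$. Here I would follow the program announced in Section \ref{sec:weights_red}: split the (low-frequency) region of Fourier space into subdomains governed respectively by a space-resonance (output-frequency / integration-by-parts-in-$\eta$) mechanism and by a time-resonance (integration-by-parts-in-$s$) mechanism, as developed in Sections \ref{sec:direct_approach}, \ref{sec:time_res}, \ref{sec:splitting}. Away from the space resonance $\{\xi=2\eta\}$, integrating by parts in $\eta$ against $\nabla_\eta\Phi$ trades the factor $s$ for a derivative hitting the profiles and the multiplier; the resulting bilinear symbols, after localizing away from the singularities at $\xi=0,\eta=0,\xi=\eta$ and using homogeneity, satisfy the Coifman–Meyer hypotheses (Theorem \ref{thm:CM}), or — in the regime where $\eta$ is much smaller than $\xi-\eta$ — the hypotheses of Theorem \ref{thm:multhm} at the borderline $s=1$; this is where the $L^{2/\rho}$ norms and the factor $\rho^{-3/2}N^\rho$ in the last two integrals of \eqref{eq:2weights} come from (Bernstein loss $N^\rho$ and the $\mathcal M^s$-norm blowup as $s\to1$). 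In the complementary region near the space resonance one instead integrates by parts in time using $\abs{\Phi}\gtrsim$ (something) away from $\mathcal R$, at the cost of a boundary term and a term with $\partial_s$ hitting a profile, i.e. another copy of the nonlinearity, which produces the quadratic-in-decay term $\int_0^t s(\abs{u}_{L^\infty}^2+\abs{u}_{L^\infty}\abs{Du}_{L^\infty})\abs{\omega}_{H^2}ds$.

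Concretely, the order of steps: (1) reduce to $\abs{\eta}\lesssim\abs{\xi-\eta}$, all frequencies $\lesssim N$; (2) dispose of \eqref{eq:dmult_term} and \eqref{eq:direct_weight} by the earlier sections; (3) decompose \eqref{eq:hard_piece} dyadically in $\abs{\eta}$ and $\abs{\xi}$ and split according to proximity to $\mathcal S$; (4) in the non-space-resonant pieces integrate by parts in $\eta$, check the multiplier conditions in each localization, and apply Theorem \ref{thm:CM} or Theorem \ref{thm:multhm}, converting everything to products $\abs{\,\cdot\,}_{L^2}\cdot\abs{\,\cdot\,}_{L^\infty}$ or $\abs{\,\cdot\,}_{L^2}\cdot\abs{\,\cdot\,}_{L^{2/\rho}}$ and re-expressing $L^\infty$ and $L^{2/\rho}$ norms of frequency-truncated pieces via $\omega$, $u$, $Du$; (5) in the near-space-resonant piece integrate by parts in $s$, estimating the boundary and the $\partial_s$-nonlinearity term; (6) collect all contributions, bounding weighted norms of truncated profiles by $\abs{D^2xf}_{L^2}$ (plus energy terms via $\abs{P_{\leq N}D^{-\nu}f}_{L^2}\lesssim N^{3\nu}\abs{f}_{L^2}$-type estimates), to arrive at \eqref{eq:2weights}.

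The main obstacle, as the authors themselves flag, is step (5)/(3) near the spacetime resonance $\mathcal R=\{\xi=(0,2\lambda),\eta=(0,\lambda)\}$: since $\Phi$ cannot be computed explicitly one cannot cleanly separate $\mathcal T$ and $\mathcal S$, so the decomposition must be done with quantitative lower bounds on $\abs{\Phi}$ and $\abs{\nabla_\eta\Phi}$ that degenerate simultaneously only on a thin set, and one must verify that the resulting multipliers — which are emphatically \emph{not} standard Calderón–Zygmund symbols because of the anisotropy and the small-frequency singularity of $L_1$ — still fall under Theorem \ref{thm:CM} or Theorem \ref{thm:multhm} after the appropriate Littlewood–Paley localization. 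Controlling the $N^\rho$ losses (so that $P(\epsilon,\rho)\lesssim\epsilon^{1/8}$ in the bootstrap) while keeping the implicit constants $\rho^{-3/2}$ integrable is the delicate quantitative point that makes the whole scheme close.
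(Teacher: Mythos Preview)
Your outline matches the paper's strategy closely through steps (1)--(4) and (6): the three-region splitting by the relative sizes of $\abs{\xi},\abs{\eta},\abs{\xi-\eta}$, the use of Coifman--Meyer in Regions 1 and 2 and of Theorem~\ref{thm:multhm} in Region 3 (which is exactly where the $\rho^{-3/2}N^\rho$ and $L^{2/\rho}$ norms arise), and the treatment of \eqref{eq:dmult_term}, \eqref{eq:direct_weight} are all as in the paper.

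The genuine gap is in your step (5). You propose that near the space resonance $\{\xi=2\eta\}$ one integrates by parts in $s$, i.e.\ divides by $\Phi$. But the spacetime resonant set $\mathcal{R}=\{\xi=(0,2\lambda),\eta=(0,\lambda)\}$ sits inside this region, and there $\Phi$ vanishes as well, so $m/\Phi$ is not a bounded multiplier globally on $\{\abs{\xi-2\eta}\ll\abs{\eta}\}$. Your fallback suggestion --- that $\abs{\Phi}$ and $\abs{\nabla_\eta\Phi}$ degenerate simultaneously only on a thin set, to be excised --- is not what the paper does and would not by itself close: a pure measure-of-the-bad-set argument does not recover the factor $s^{-1}$ you need.

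The paper's resolution (Region 1, Case 2 in Section~\ref{sec:splitting}) is an \emph{anisotropic} split of the near-space-resonant zone according to the size of $\abs{\xi_1}$ relative to $\abs{\eta_1}$. When $\abs{\xi_1}\gtrsim\abs{\eta_1}$ one proves the explicit lower bound $\abs{\Phi}\gtrsim\abs{\xi_1}/\abs{\eta}^2$, and the null form gives $\abs{m}/\abs{\Phi}\lesssim_{CM}\abs{\eta}$, so time integration by parts works and produces your cubic term $\int_0^t s(\abs{u}_{L^\infty}^2+\abs{u}_{L^\infty}\abs{Du}_{L^\infty})\abs{\omega}_{H^2}\,ds$. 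When $\abs{\xi_1}\ll\abs{\eta_1}$ --- the region containing $\mathcal{R}$ --- time integration by parts is explicitly abandoned. Instead one integrates by parts \emph{only in $\eta_2$}: Lemma~\ref{lem:C2SC2} shows that in this zone $\abs{\partial_{\eta_2}\Phi}\gtrsim\abs{\eta_1}\abs{\eta}/\abs{\xi-\eta}^4$, and crucially the null form satisfies $\abs{\xi\cdot\eta^\perp}\sim\abs{\eta_1}\abs{\eta}$ there, so the $\abs{\eta_1}$ factors cancel and $m/\partial_{\eta_2}\Phi\lesssim_{CM}\abs{\eta}^2$. This directional integration by parts, exploiting the matching of the anisotropic degeneracy of $\partial_{\eta_2}\Phi$ with the zero of the null form, is the missing idea in your proposal.
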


Its proof involves the resonance analysis hinted at earlier and is carried out next (see Section \ref{sec:splitting}).

\section{Weighted Estimate -- Resonance Analysis}\label{sec:res}
In this chapter we complete the proof of the weighted estimates, following the strategy described in the outline of the proofs of Propositions \ref{prop:added_derivative}-\ref{prop:lowfreq} at the beginning of Section \ref{sec:weights_red}. In Sections \ref{sec:direct_approach} and \ref{sec:time_res} we first give a basic overview of useful techniques in the context of the resonance analysis for the weighted estimate. Section \ref{sec:splitting} then lays out the relevant details in the case of two derivatives with a weight, thus concluding the proof of Proposition \ref{prop:lowfreq}. Finally, in Section \ref{sec:add_deriv} we will see how one can obtain bounds for an additional third derivative on the weighted profile function. This finishes the proof of Proposition \ref{prop:added_derivative}.

Many of the estimates in this section rely on the computations in Appendix \ref{apdx:comp}.

In view of the analysis in the previous chapter, in the case of two derivatives on a weighted profile function we are left with estimating in $L^2$ the time integral
\begin{equation*}
 \abs{\xi}^2\int_0^t\int_{\R^2}is\partial_\xi\Phi e^{is\Phi}m(\xi,\eta)\hat{f}(\xi-\eta)\hat{f}(\eta)d\eta ds.
\end{equation*}
We recall that we may assume all frequencies to be bounded by some $N>0$ and that $\abs{\eta}\lesssim\abs{\xi-\eta}$. For simplicity of notation we will not explicitly express the localizations, but they will be assumed implicitly in all the relevant terms that follow.

\subsection{The ``Direct'' Approach}\label{sec:direct_approach}
A natural thing to do is to try to get rid of the extra time factor $s$ through space resonances: Using
\begin{equation}\label{eq:ibpid}
 e^{is\Phi}=\frac{1}{is\abs{\nabla_\eta\Phi}^2}\nabla_\eta\Phi\cdot\nabla_\eta e^{is\Phi}
\end{equation}
to integrate by parts in $\eta$ leads to terms of the type
\begin{equation*}
 \frac{\partial_{\xi_j}\Phi}{\abs{\nabla_\eta\Phi}^2}\partial_{\eta_l}\Phi \;m(\xi,\eta)
\end{equation*}
and first order derivatives in $\eta_l$ thereof, $j,l=1,2$.

However, one computes that
\begin{equation}\label{eq:phasequot}
 \frac{\abs{\nabla_\xi\Phi}}{\abs{\nabla_\eta\Phi}}= \frac{\abs{\eta-2\xi} \abs{\eta}^3}{\abs{\xi-2\eta}\abs{\xi}^3},
\end{equation}
so these quotients have singularities at $\xi=0$ and at $\xi=2\eta$ and will only be useful in certain subdomains of Fourier space. On the other hand we point out that the null structure of $m$ helps with some cancellation: since $\xi^\perp\ccdot\eta=(\xi-\eta)^\perp\ccdot\eta=(\xi-2\eta)^\perp\ccdot\eta$, one order of singularity can be dealt with in some cases. However, notice that an integration by parts will yield a term without null structure (since the derivative may fall on $m$), in which case the singularity of $\xi-2\eta$ needs to be dealt with by other means. This highlights the importance of a suitable choice of region on which to use this approach.

In more detail, an integration by parts in $\eta$ using \eqref{eq:ibpid}
leaves us with two types of terms:
\begin{enumerate}
 \item\label{it:t1} When the derivative in $\eta$ lands on one of the profiles: $$\int_0^t\int_{\R^2}e^{is\Phi}\abs{\xi}^2 m(\xi,\eta)\frac{\partial_{\xi_j}\Phi}{\abs{\nabla_\eta\Phi}^2}\nabla_\eta\Phi \cdot \nabla_\eta\hat{f}(\xi-\eta)\hat{f}(\eta)d\eta ds$$ or $$\int_0^t\int_{\R^2}e^{is\Phi}\abs{\xi}^2m(\xi,\eta)\frac{\partial_{\xi_j}\Phi}{\abs{\nabla_\eta\Phi}^2}\nabla_\eta\Phi\cdot \hat{f}(\xi-\eta)\nabla_\eta\hat{f}(\eta)d\eta ds.$$ This is the most delicate case to deal with, since it involves the weighted estimate itself -- in order to be able to close our estimates we need to make sure that the weights on the profile come with at least\footnote{Since all frequencies are assumed to be bounded by $N>0$, extra derivatives just amount to a loss of some power of $N$.} two derivatives. On the positive side we remark that, as noted above, the null structure can help in controlling the singularities $\xi=2\eta$ and $\xi=0$ in this case.
 
 In total, here we have a multiplier with $$\frac{\abs{\nabla_\xi\Phi}}{\abs{\nabla_\eta\Phi}}\abs{m}\leq \frac{\abs{\eta-2\xi} \abs{\eta}^2}{\abs{\xi-2\eta}\abs{\xi}^3}\min\{\abs{\xi}, \abs{\xi-2\eta}\},$$ so the singularity in $\xi$ is only of order $2$ and is thus fully compensated by $\abs{\xi}^2$.
 
 \item\label{it:t2} When the derivative in $\eta$ lands on the Fourier multipliers: $$\int_0^t\int_{\R^2}e^{is\Phi}\abs{\xi}^2\nabla_\eta\ccdot\left(\frac{\partial_{\xi_j}\Phi}{\abs{\nabla_\eta\Phi}^2}\nabla_\eta\Phi\; m(\xi,\eta)\right)\hat{f}(\xi-\eta)\hat{f}(\eta)d\eta ds.$$ Unlike the other terms, this one can be dealt with purely by energy estimates. For this term we cannot make use of the null structure to deal with the singularities arising, since it will be lost once the derivative in $\eta$ falls on it.
 
 Clearly we have $$\nabla_\eta\ccdot\left(\frac{\partial_{\xi_j}\Phi}{\abs{\nabla_\eta\Phi}^2}\nabla_\eta\Phi\; m\right)=\nabla_\eta\ccdot\left(\nabla_\eta\Phi\frac{\partial_{\xi_j}\Phi}{\abs{\nabla_\eta\Phi}^2}\right)m+\left(\nabla_\eta\Phi\frac{\partial_{\xi_j}\Phi}{\abs{\nabla_\eta\Phi}^2}\right)\ccdot \nabla_\eta m.$$

 For the first term we point out that
 \begin{equation*}
  \partial_{\eta_1}^2\Phi+\partial_{\eta_2}^2\Phi=0,
 \end{equation*}
 so that when the integration by parts produces a derivative on the phase quotients we have
 \begin{equation}\label{eq:Dphasequot}
 \begin{aligned}
  \nabla_\eta\ccdot\left(\nabla_\eta\Phi\frac{\partial_{\xi_j}\Phi}{\abs{\nabla_\eta\Phi}^2}\right)&=\nabla_\eta\Phi\ccdot\nabla_\eta\left(\frac{\partial_{\xi_j}\Phi}{\abs{\nabla_\eta\Phi}^2}\right)\\
   &\hspace{-2cm}=\frac{1}{2\abs{\nabla_\eta\Phi}^2}\partial_{\xi_j}\left(\abs{\nabla_\eta\Phi}^2\right)-\frac{1}{\abs{\nabla_\eta\Phi}^4}\partial_{\xi_j}\Phi\nabla_\eta\Phi \ccdot\nabla_\eta\left(\abs{\nabla_\eta\Phi}^2\right).
 \end{aligned}
 \end{equation}
 From the computations in Appendix \ref{apdx:deriv} we see that these terms are of the order $$ \abs{\frac{1}{2\abs{\nabla_\eta\Phi}^2}\partial_{\xi_j}\left(\abs{\nabla_\eta\Phi}^2\right) }\lesssim\frac{\abs{\eta}^2}{\abs{\xi-2\eta}\abs{\xi}\abs{\xi-\eta}}$$ and
 \begin{equation*}
 \begin{aligned}
  \abs{\frac{1}{\abs{\nabla_\eta\Phi}^4}\partial_{\xi_j}\Phi\nabla_\eta\Phi \ccdot\nabla_\eta\left(\abs{\nabla_\eta\Phi}^2\right)}&\lesssim \frac{\abs{\xi-\eta}^4\abs{\eta}^4}{\abs{\xi-2\eta}^2\abs{\xi}^2}\left(\abs{\xi-\eta}^{-2}+\abs{\xi}^{-2}\right)\\
  &\qquad\times\left(\abs{\eta}^{-3}+\abs{\xi-\eta}^{-3}\right)
 \end{aligned}
 \end{equation*}

 In the computations later on we will combine this with the estimates $\abs{m}\leq \min\{\abs{\xi},\abs{\xi-2\eta}\}\abs{\eta}^{-1}$ and $\abs{\nabla_\eta m}\leq \abs{\xi}\abs{\eta}^{-2}$ (see also Appendix \ref{apdx:comp}).
\end{enumerate}

\subsection{A Time Resonance Estimate}\label{sec:time_res}
As an illustration for the estimates needed later on we treat here the case of pure time resonances (which will come up later on for Region 1 in Section \ref{sec:splitting}), i.e.\ we demonstrate how to estimate a term of the form
\begin{equation*}
 \int_0^t\int_{\R^2}is\Phi e^{is\Phi}m(\xi,\eta)\hat{f}(\xi-\eta)\hat{f}(\eta)d\eta ds.
\end{equation*}

Clearly $is\Phi e^{is\Phi}=s\partial_s\left(e^{is\Phi}\right)$, so that integration by parts in time yields the following pieces:
\begin{equation*}
\begin{aligned}
 \int_0^t\int_{\R^2}&is\Phi e^{is\Phi}m(\xi,\eta)\hat{f}(s,\xi-\eta)\hat{f}(s,\eta)d\eta ds\\
 &\quad =it\int_{\R^2}e^{it\Phi}m(\xi,\eta)\hat{f}(t,\xi-\eta)\hat{f}(t,\eta)d\eta\\
 &\qquad -\int_0^t\int_{\R^2} e^{is\Phi}m(\xi,\eta)\hat{f}(s,\xi-\eta)\hat{f}(s,\eta)d\eta ds\\
 &\qquad -\int_0^t\int_{\R^2} s e^{is\Phi}m(\xi,\eta)\partial_s\hat{f}(s,\xi-\eta)\hat{f}(s,\eta)d\eta ds\\
 &\qquad -\int_0^t\int_{\R^2} s e^{is\Phi}m(\xi,\eta)\hat{f}(s,\xi-\eta)\partial_s\hat{f}(s,\eta)d\eta ds.
\end{aligned}
\end{equation*}

We recall that $u$ decays in $L^\infty$ at the same rate as $\omega$ and $Du$ (see also Remark \ref{rem:u_Du_decay}). In out bootstrapping scheme all these terms will then be bounded, as the $L^\infty$ decay gives integrability on arbitrarily long time intervals. Note that the terms with an extra factor of $s$ inside the time integral are actually cubic, since by the equation we can substitute a bilinear term for $\partial_s\hat{f}$. In these cases we can take advantage of the decay twice. We proceed to give the relevant details.

\subsubsection*{The first term}
We note that
\begin{equation*}
 \mathcal{F}^{-1}\left(\int_{\R^2}e^{it\Phi}m(\xi,\eta)\hat{f}(t,\xi-\eta)\hat{f}(t,\eta)d\eta\right)=u\ccdot\nabla\omega(t),
\end{equation*}
so this is easily controlled:
\begin{equation*}
 \abs{t\int_{\R^2}e^{it\Phi}m(\xi,\eta)\hat{f}(t,\xi-\eta)\hat{f}(t,\eta)d\eta}_{L^2_\xi}=\abs{t u\ccdot\nabla\omega(t)}_{L^2_x}\lesssim t \abs{u(t)}_{L^\infty}\abs{\nabla\omega(t)}_{L^2}.
\end{equation*}

\subsubsection*{The second term}
This is of the same order as the first term, since
\begin{equation*}
 \int_0^t\abs{\int_{\R^2} e^{is\Phi}m(\xi,\eta)\hat{f}(s,\xi-\eta)\hat{f}(s,\eta)d\eta }_{L^2_\xi}ds=\int_0^t\abs{u\ccdot\nabla\omega(s)}_{L^2_x}ds.
\end{equation*}

\subsubsection*{The third and fourth terms}
The third and fourth terms can be estimated essentially in the same way, so we only give the details for the third one. From the definition of the profile $f(t):=e^{-L_1t}\omega(t)$ it follows that
\begin{equation*}
 \partial_t f(t)=e^{-L_1t}(-L_1\omega(t)+\partial_t\omega(t))=e^{-L_1t}(u\ccdot\nabla\omega(t)).
\end{equation*}
Hence the standard estimate gives
\begin{equation*}
 \begin{aligned}
  &\int_0^t\abs{\int_{\R^2} s e^{is\Phi}m(\xi,\eta)\partial_s \hat{f}(s,\xi-\eta)\hat{f}(s,\eta)d\eta}_{L^2_\xi} ds\\
  &\quad =\int_0^t s\abs{u\ccdot\nabla\left(e^{L_1t}(u\ccdot\nabla\omega)\right)}_{L^2_x} ds\\
  &\quad \leq \int_0^t s\abs{u(s)}_{L^\infty}\abs{\nabla\left(u\ccdot\nabla\omega\right)}_{L^2}ds\\
  &\quad \leq \int_0^t s\abs{u(s)}_{L^\infty}\left(\abs{u(s)}_{L^\infty}\abs{\omega(s)}_{H^2}+\abs{Du(s)}_{L^\infty}\abs{\omega(s)}_{H^1}\right)ds.
 \end{aligned}
\end{equation*}

\subsection{The Splitting of Fourier Space for Low Frequencies}\label{sec:splitting}
We will use the following divisions of Fourier space into regions where various techniques will be employed to control \eqref{eq:hard_piece}, the most delicate term that comes up when we estimate a weight on the profile. We recall (Section \ref{sec:redlowfreq}) that for the estimates themselves we may assume that all frequencies are bounded by some $N>0$ (which of course is not relevant for the splitting, but rather for the potential derivative loss), as well as the restriction that $\abs{\eta}\lesssim\abs{\xi-\eta}$, as discussed in Section \ref{sec:simpl}.

We point out our use of the following
\begin{notation}
 In this section, all integrals in $\eta$ have a localization (which we may not write explicitly) given by the particular subsection they are found in.
\end{notation}

\begin{description}
 \item[\textbf{Region 1}]
  \begin{equation}\label{eq:G1.1} \frac{\abs{\eta}}{100} \leq \abs{\xi} \leq 100\abs{\eta} \end{equation}
  and
  \begin{equation}\label{eq:G1.2}  \frac{\abs{\eta}}{10000} \leq \abs{\xi-\eta} \leq 10000\abs{\eta} \end{equation}

 \item[\textbf{Region 2}]
  \begin{equation}\label{eq:G3} \abs{\xi} \leq \frac{\abs{\eta}}{100} \end{equation}

 \item[\textbf{Region 3}]
  \begin{equation}\label{eq:G4} \abs{\xi} \geq {100\abs{\eta}} \end{equation}
\end{description}

Let us now discuss how to obtain the relevant bounds for \eqref{eq:hard_piece}. As outlined further above, this will involve a study of the resonances (by which the above splitting of Fourier space is inspired) in the various domains and subsequent use of the techniques discussed in Sections \ref{sec:direct_approach} and \ref{sec:time_res}.

The result of this process is the estimate \eqref{eq:2weights} from Proposition \ref{prop:lowfreq} ($0<\rho\ll 1)$:
 \begin{equation*}
  \begin{aligned}
    \abs{D^2xf(t)}_{L^2}&\lesssim \abs{D^2xf_0}_{L^2}+\int_0^t \abs{D^2xf(s)}_{L^2}\abs{\omega(s)}_{L^\infty}ds\\
    &\quad+\int_0^t \left(\abs{\omega(s)}_{L^\infty}+\abs{u(s)}_{L^\infty}\right)\abs{\omega(s)}_{H^1}ds\\
    &\quad +\int_0^t s \left(\abs{u(s)}^2_{L^\infty}+\abs{u(s)}_{L^\infty}\abs{Du(s)}_{L^\infty}\right)\abs{\omega(s)}_{H^2}ds\\
    &\quad +\int_0^t \rho^{-\frac{3}{2}}N^{\rho}\abs{D^2xf(s)}_{L^2}\abs{\omega(s)}_{L^{\frac{2}{\rho}}}ds\\
    &\quad +\int_0^t \rho^{-\frac{3}{2}}N^{\rho}\left(\abs{\omega(s)}_{L^\frac{2}{\rho}}+\abs{u(s)}_{L^\frac{2}{\rho}}\right)\abs{\omega(s)}_{H^1}ds.
  \end{aligned}
 \end{equation*}
Here the first three integrals come from the estimates in Regions 1 and 2 via the Coifman-Meyer Theorem \ref{thm:CM}, whereas the last two integral terms are the product of an estimate using Theorem \ref{thm:multhm} in Region 3. The basic techniques for the proof of this estimate have been introduced above, so now we complete the steps outlined there by discussing the bounds for the various regions of our splitting.

\subsubsection{Region 1 -- \eqref{eq:G1.1} and \eqref{eq:G1.2}: $\abs{\xi}\sim\abs{\eta}\sim\abs{\xi-\eta}$}
Here all frequencies are of comparable size and hence the multipliers can be controlled via Coifman-Meyer type estimates. The main difficulty is to deal with the space resonance $\xi=2\eta$. We thus split into two cases depending on the size of $\abs{\xi-2\eta}$: Away from the space resonance a direct integration by parts goes through, whereas for ``small'' $\abs{\xi-2\eta}$ we further (anisotropically) split into two subcases and use time resonances or a ``good'' direction in which to integrate by parts. These steps rely on the null structure of the bilinearity.

\subsubsection*{Case 1:} \begin{equation}\label{eq:G1C1} \abs{\xi-2\eta} \geq \frac{\abs{\eta}}{1000}\end{equation}
In this case all singularities are avoided and we will only need to integrate by parts in $\eta$. Indeed, recall from \eqref{eq:phasequot} in Section \ref{sec:direct_approach} that 
\begin{equation*}
\frac{\abs{\nabla_\xi\Phi}}{\abs{\nabla_\eta\Phi}}= \frac{\abs{\eta-2\xi} \abs{\eta}^3}{\abs{\xi-2\eta}\abs{\xi}^3}.
\end{equation*}
Under \eqref{eq:G1.1}, \eqref{eq:G1.2} and \eqref{eq:G1C1} we see directly that\footnote{Recall the notation $\lesssim_{CM}$ for boundedness in the sense of a Coifman-Meyer multiplier from page \pageref{rmk:CM_not}.}
$$\frac{\abs{\nabla_\xi\Phi}}{\abs{\nabla_\eta\Phi}} \lesssim_{CM} 1.$$
Hence this term (of type \eqref{it:t1} as described in Section \ref{sec:direct_approach}) gives back the weight $D^2xf$ in $L^2$ and $\omega$ in $L^\infty$, i.e.\ for it we have the bound
$$\int_0^t \abs{\abs{\xi}^2\partial_\xi\hat{f}(s)}_{L^2} \abs{\omega(s)}_{L^\infty} ds.$$

Similarly, from \eqref{eq:Dphasequot} and the observations in Section \ref{sec:direct_approach} we also deduce that
$$\abs{\nabla_\eta\cdot\left(\nabla_\eta\Phi\frac{\partial_{\xi_i}\Phi}{\abs{\nabla_\eta\Phi}^2}\right) }\lesssim_{CM} \abs{\eta}^{-1},$$
so that the multipliers for terms of type \eqref{it:t2} are of order $\abs{\xi}^2\abs{\eta}^{-1}$ (in the sense of Theorem \ref{thm:CM}) and we can bound them by
$$\int_0^t \abs{\omega(s)}_{H^2}\abs{u(s)}_{L^\infty} ds.$$

\subsubsection*{Case 2:} \begin{equation}\label{eq:G1C2} \abs{\xi-2\eta} \leq \frac{\abs{\eta}}{1000}\end{equation}
Here we observe that
\begin{equation}\label{eq:G1C2.1}
 \abs{\Phi} \geq \frac{\frac{6}{10}\abs{\xi_1}-\frac{2}{1000}\abs{\eta_1}}{\abs{\eta}^2}.
\end{equation}
Indeed, this is gotten simply by noting that $$\Phi= \xi_1 \left( \frac{1}{\abs{\xi}^2} - \frac{1}{\abs{\xi-\eta}^2} \right) + \eta_1 \left( \frac{1}{\abs{\xi-\eta}^2} - \frac{1}{\abs{\eta}^2} \right)$$
and invoking \eqref{eq:G1C2} and \eqref{eq:G1.1} and \eqref{eq:G1.2}.  

\begin{proof}[Proof of \eqref{eq:G1C2.1}]
We begin by observing that $\abs{\xi-2\eta}\leq \frac{\abs{\eta}}{1000}$ implies that $\frac{2001}{1000}\abs{\eta}\geq \abs{\xi}\geq \frac{1999}{1000}\abs{\eta}$ and thus $\frac{1001}{1000}\abs{\eta}\geq \abs{\xi-\eta}\geq \frac{999}{1000}\abs{\eta}$.

Now one has $\frac{1}{\abs{\xi-\eta}^2}-\frac{1}{\abs{\eta}^2}=\left(\frac{1}{\abs{\xi-\eta}}-\frac{1}{\abs{\eta}}\right)\left(\frac{1}{\abs{\xi-\eta}}+\frac{1}{\abs{\eta}}\right)$ and thus
$$\abs{\frac{1}{\abs{\xi-\eta}}-\frac{1}{\abs{\eta}}}=\frac{\abs{\abs{\xi-\eta}-\abs{\eta}}}{\abs{\eta}\abs{\xi-\eta}}\leq \frac{\abs{\xi-2\eta}}{\abs{\eta}\abs{\xi-\eta}}\leq \frac{1}{1000\abs{\xi-\eta}},$$
which yields
$$\abs{\frac{1}{\abs{\xi-\eta}^2}-\frac{1}{\abs{\eta}^2}}\leq \frac{1}{1000\abs{\eta}\abs{\xi-\eta}}\leq \frac{1}{999\abs{\eta}^2}.$$

On the other hand,
\begin{equation*}
\begin{aligned}
 \abs{\frac{1}{\abs{\xi}^2}-\frac{1}{\abs{\xi-\eta}^2}}&=\abs{\frac{1}{\abs{\xi}}-\frac{1}{\abs{\xi-\eta}}} \abs{\frac{1}{\abs{\xi}}+\frac{1}{\abs{\xi-\eta}}}=\abs{\frac{\abs{\xi}-\abs{\xi-\eta}}{\abs{\xi}\abs{\xi-\eta}}} \abs{\frac{1}{\abs{\xi}}+\frac{1}{\abs{\xi-\eta}}}\\ 
  &\geq \abs{\frac{\abs{\eta}}{\frac{1999}{1000}\abs{\eta}\frac{999}{1000}\abs{\eta}}} \abs{\frac{1000}{2001}\frac{1}{\abs{\eta}}+ \frac{1000}{1001}\frac{1}{\abs{\eta}}}\\
  &\geq \frac{6}{10}\frac{1}{\abs{\eta}^2}.
\end{aligned}
\end{equation*}
Altogether we have thus established \eqref{eq:G1C2.1}.
\end{proof}

This leads us to two subcases. 

\paragraph{\underline{Subcase A}:}  \begin{equation}\label{eq:G1C2A} \abs{\xi_1}\geq \frac{\abs{\eta_1}}{100}\end{equation}

In this case, from \eqref{eq:G1C2.1} we deduce that
$$\abs{\Phi} \geq \frac{\abs{\xi_1}}{2\abs{\eta}^2}.$$
But then 
$$\frac{\abs{m}}{\abs{\Phi}}=\frac{1}{\abs{\Phi}}\frac{\abs{\xi_1\eta_2-\xi_2\eta_1}}{\abs{\eta}^2}\leq \frac{2\abs{\eta}^2}{\abs{\xi_1}}\frac{\abs{\xi_1} (\abs{\eta}+\abs{\xi})}{\abs{\eta}^2} \lesssim\abs{\eta}$$
by virtue of \eqref{eq:G1.1}, \eqref{eq:G1.2} and \eqref{eq:G1C2A}. It follows that $\frac{m}{\Phi}\lesssim_{CM}\abs{\eta}$ and hence we can integrate by parts in time to obtain estimates similar to the ones before (where the shorthand $\chi_A$ denotes the localizations of this subcase):
\begin{equation}
 \begin{aligned}
  &\int_0^t\int_{\R^2}is\abs{\xi}^2\partial_\xi\Phi e^{is\Phi}m(\xi,\eta)\chi_{A}\,\hat{f}(\xi-\eta)\hat{f}(\eta)d\eta ds\\
  &\quad =\int_0^t\int_{\R^2}is\abs{\xi}^2\partial_\xi\Phi \frac{1}{i\Phi}\partial_s\left(e^{is\Phi}\right)m(\xi,\eta)\chi_{A}\,\hat{f}(\xi-\eta)\hat{f}(\eta)d\eta ds\\
  &\quad =it\int_{\R^2}e^{it\Phi}\abs{\xi}^2\partial_\xi\Phi\frac{m}{\Phi}\chi_{A}\,\hat{f}(\xi-\eta)\hat{f}(\eta)d\eta\\
  &\qquad -\int_0^t\int_{\R^2}e^{is\Phi}\abs{\xi}^2\partial_\xi\Phi\frac{m}{\Phi}\chi_{A}\,\hat{f}(\xi-\eta)\hat{f}(\eta)d\eta ds\\
  &\qquad -\int_0^t\int_{\R^2}se^{is\Phi}\abs{\xi}^2\partial_\xi\Phi\frac{m}{\Phi}\chi_{A}\,\partial_s\hat{f}(\xi-\eta)\hat{f}(\eta)d\eta ds\\
  &\qquad -\int_0^t\int_{\R^2}se^{is\Phi}\abs{\xi}^2\partial_\xi\Phi\frac{m}{\Phi}\chi_{A}\,\hat{f}(\xi-\eta)\partial_s\hat{f}(\eta)d\eta ds.\\
 \end{aligned}
\end{equation}

These terms can now essentially be estimated as outlined in Section \ref{sec:time_res} (note that the additional multipliers $\abs{\xi}^2\partial_\xi\Phi$ are of order one, since $\abs{\xi}\sim\abs{\xi-\eta}$ in Region 1). In particular, no weights are required and there is no derivative loss in $L^\infty$.

\paragraph{\underline{Subcase B}:}  \begin{equation}\label{eq:G1C2B} \abs{\xi_1}< \frac{\abs{\eta_1}}{100}\end{equation}

In this case, integration by parts in time will not avail us. However, due to the constraints on $\xi_1$ and the other variables we may integrate by parts in $\eta_2$ to obtain something useful:
\begin{lemma}\label{lem:C2SC2}
 Assume that $\abs{\xi-2\eta} \leq \frac{\abs{\eta}}{1000}$ and $\abs{\xi_1}< \frac{\abs{\eta_1}}{100}$. Then
 \begin{equation}
  \frac{m}{\partial_{\eta_2} \Phi}\lesssim_{CM} \abs{\eta}^2
 \end{equation}
 and 
 \begin{equation}\label{eq:Deta2phasemult}
 \partial_{\eta_2} \left(\frac{m}{\partial_{\eta_2} \Phi}\right) \lesssim_{CM} \abs{\eta}.
\end{equation}
\end{lemma}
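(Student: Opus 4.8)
The plan is to first distill from the hypotheses \eqref{eq:G1C2} and \eqref{eq:G1C2B} (together with the Region~1 constraints \eqref{eq:G1.1}--\eqref{eq:G1.2}) sharp pointwise information on the components of $\xi,\eta$, and then to read both bounds off explicit formulas for $m$ and $\partial_{\eta_2}\Phi$, the gain of $\abs{\eta}^2$ coming from a factor common to numerator and denominator. First I would record that $\abs{\xi-2\eta}\leq\frac{\abs{\eta}}{1000}$ gives $\abs{\xi_1-2\eta_1}\leq\frac{\abs{\eta}}{1000}$, so that, combining this with $\abs{\xi_1}<\frac{\abs{\eta_1}}{100}$ via $2\abs{\eta_1}\leq\abs{\xi_1-2\eta_1}+\abs{\xi_1}$, one is forced to have $\abs{\eta_1}\lesssim\frac{\abs{\eta}}{1000}$, hence $\abs{\eta_2},\abs{\xi_2},\abs{\xi_2-\eta_2}\sim\abs{\eta}$ while $\abs{\xi_1},\abs{\xi_1-\eta_1}\lesssim\abs{\eta_1}$; in particular $\abs{\xi}\sim\abs{\eta}\sim\abs{\xi-\eta}$, so we are indeed in Region~1 and all of $m,\Phi,\partial_{\eta_2}\Phi$ are smooth and nonsingular here. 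Crucially, $\abs{\xi_1}<\frac{\abs{\eta_1}}{100}$ also yields the \emph{two-sided} bound $\abs{\xi_1-2\eta_1}\sim\abs{\eta_1}$.

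Next I would use the null structure: with $\delta:=\xi-2\eta$ one has the exact identity $m=\frac{\xi\cdot\eta^\perp}{\abs{\eta}^2}=\frac{\delta\cdot\eta^\perp}{\abs{\eta}^2}=\frac{\xi_2\eta_1-\xi_1\eta_2}{\abs{\eta}^2}$, and since the $\xi_1-2\eta_1$ contribution dominates the $\eta_1\delta_2$ one, $\abs{m}\sim\frac{\abs{\eta_1}}{\abs{\eta}}$. For the phase, writing $\Phi(\xi,\eta)=g(\xi)-g(\xi-\eta)-g(\eta)$ with $g(w)=\frac{w_1}{\abs{w}^2}$ gives $\partial_{\eta_2}\Phi=h(\xi-\eta)-h(\eta)$ with $h$ the derivative of $g$ in its second slot, and the observation that makes everything work is the factorization $h(w)=-\frac{2w_1w_2}{\abs{w}^4}=w_1\tilde{h}(w)$ with $\tilde{h}(w)=-\frac{2w_2}{\abs{w}^4}$ smooth and $\tilde{h}(w)\sim\abs{w}^{-3}$ on the region. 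Hence
\begin{equation*}
 \partial_{\eta_2}\Phi=(\xi_1-\eta_1)\tilde{h}(\xi-\eta)-\eta_1\tilde{h}(\eta)=(\xi_1-2\eta_1)\,\tilde{h}(\xi-\eta)+\eta_1\big(\tilde{h}(\xi-\eta)-\tilde{h}(\eta)\big),
\end{equation*}
where $\abs{\tilde{h}(\xi-\eta)-\tilde{h}(\eta)}\lesssim\abs{\delta}\abs{\eta}^{-4}\lesssim\abs{\eta}^{-3}/1000$ is a genuine perturbation of $\abs{(\xi_1-2\eta_1)\tilde{h}(\xi-\eta)}\sim\abs{\eta_1}\abs{\eta}^{-3}$; thus $\abs{\partial_{\eta_2}\Phi}\sim\abs{\eta_1}\abs{\eta}^{-3}$ and the quotient satisfies $\abs{m/\partial_{\eta_2}\Phi}\sim\abs{\eta}^2$ pointwise. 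For \eqref{eq:Deta2phasemult} I would expand $\partial_{\eta_2}(m/\partial_{\eta_2}\Phi)=\frac{\partial_{\eta_2}m}{\partial_{\eta_2}\Phi}-\frac{m\,\partial_{\eta_2}^2\Phi}{(\partial_{\eta_2}\Phi)^2}$ and use $\abs{\partial_{\eta_2}m}\lesssim\abs{\eta_1}\abs{\eta}^{-2}$ together with $\abs{\partial_{\eta_2}^2\Phi}\lesssim\abs{\eta_1}\abs{\eta}^{-4}$ — the latter since $\partial_{\eta_2}^2\Phi=-(\partial_2 h)(\xi-\eta)-(\partial_2 h)(\eta)$ and $\partial_2 h$ still carries a $w_1$-factor — so that both terms are $\lesssim\abs{\eta}$.

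To promote these pointwise estimates to the statements $\lesssim_{CM}$ in the sense of Notation~\ref{rmk:CM_not}, I would argue by homogeneity: $m/(\abs{\eta}^2\partial_{\eta_2}\Phi)$ is homogeneous of degree $0$, and by the reductions above it is smooth with nonvanishing denominators on the cone over the localized region, so the bounds $\abs{\partial_\xi^\alpha\partial_\eta^\beta(\cdot)}\lesssim(\abs{\xi}+\abs{\eta})^{-\abs{\alpha}-\abs{\beta}}$ for $\abs{\alpha}+\abs{\beta}\leq 5$ reduce, by compactness of the angular slice, to finitely many estimates of the above type; as usual (cf.\ Notation~\ref{rmk:CM_not} and the computations in Appendix~\ref{apdx:comp}) these higher-order verifications are tedious but routine, every constituent factoring into pieces of definite homogeneity that are comparable to their size throughout the region.

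The main obstacle is the lower bound $\abs{\partial_{\eta_2}\Phi}\gtrsim\abs{\eta_1}\abs{\eta}^{-3}$: a priori $\partial_{\eta_2}\Phi$, which vanishes on a hypersurface through the space resonance $\{\xi=2\eta\}$, could be far smaller than $m$ — or vanish — inside the region, wrecking the quotient bounds. Both hypotheses of the subcase are precisely what prevent this: $\abs{\xi-2\eta}\leq\frac{\abs{\eta}}{1000}$ keeps $\tilde{h}(\xi-\eta)-\tilde{h}(\eta)$ a small perturbation, and $\abs{\xi_1}<\frac{\abs{\eta_1}}{100}$ keeps the leading coefficient $\xi_1-2\eta_1$ of size $\sim\abs{\eta_1}$ — i.e.\ it is exactly the region complementary to Subcase~A, where instead one exploits $\abs{\Phi}\gtrsim\abs{\xi_1}\abs{\eta}^{-2}$ and integrates by parts in time. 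The factorization $h(w)=w_1\tilde{h}(w)$ is what turns this into a \emph{quantitative} bound, replacing the crude second-order Taylor remainder $O(\abs{\delta}^2\abs{\eta}^{-4})$ for $\partial_{\eta_2}\Phi$ by the sharper first-order estimate $O(\abs{\eta_1}\abs{\delta}\abs{\eta}^{-4})$.
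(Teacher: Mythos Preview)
Your proof is correct and follows essentially the same strategy as the paper: establish $\abs{m}\sim\abs{\eta_1}/\abs{\eta}$ and $\abs{\partial_{\eta_2}\Phi}\sim\abs{\eta_1}\abs{\eta}^{-3}$ so that the quotient is $\sim\abs{\eta}^2$, then bound $\abs{\partial_{\eta_2}m}\lesssim\abs{\eta_1}\abs{\eta}^{-2}$ and $\abs{\partial_{\eta_2}^2\Phi}\lesssim\abs{\eta_1}\abs{\eta}^{-4}$ for \eqref{eq:Deta2phasemult}, with the higher-order Coifman--Meyer checks left to homogeneity. The only real difference is in how the key lower bound on $\abs{\partial_{\eta_2}\Phi}$ is obtained: the paper rewrites $\partial_{\eta_2}\Phi$ as $-2\eta_1\eta_2\bigl(\abs{\eta}^{-4}-\abs{\xi-\eta}^{-4}\bigr)+2(\eta_1\xi_2+\xi_1\eta_2-\xi_1\xi_2)\abs{\xi-\eta}^{-4}$ and shows that the second piece (of size $\gtrsim\abs{\eta_1}\abs{\eta}/\abs{\xi-\eta}^4$) dominates the first, whereas you factor $h(w)=w_1\tilde{h}(w)$ and write $\partial_{\eta_2}\Phi=(\xi_1-2\eta_1)\tilde{h}(\xi-\eta)+\eta_1\bigl(\tilde{h}(\xi-\eta)-\tilde{h}(\eta)\bigr)$, isolating a main term of size $\sim\abs{\eta_1}\abs{\eta}^{-3}$ (using $\abs{\xi_1-2\eta_1}\sim\abs{\eta_1}$) plus a mean-value remainder smaller by a factor $\sim 1/1000$. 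Your decomposition is arguably cleaner and makes more transparent why both hypotheses of the subcase are needed, but the two arguments are equivalent in substance.
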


These estimates allow us to integrate by parts in $\eta_2$ in this region. One notes that as necessary we end up with two derivatives on the weight and lose one derivative in the energy estimate -- the arguments are completely analogous to our prior treatment of Case 1 for Region 1 (this is \eqref{eq:G1.1} and \eqref{eq:G1.2} with \eqref{eq:G1C1}): the integration by parts in space proceeds in close analogy to the corresponding observations on page \pageref{eq:G1C1} and we end up with the bound
$$\int_0^t \abs{D^2xf(s)}_{L^2}\abs{\omega(s)}_{L^\infty} ds+\int_0^t \abs{\omega(s)}_{H^1}\abs{\omega(s)}_{L^\infty} ds. $$
in this subcase.

\begin{proof}[Proof of Lemma \ref{lem:C2SC2}]
We begin by recalling that
\begin{equation}\label{eq:Deta2Phi}
\partial_{\eta_2} \Phi = -2{\eta_1\eta_2}\left(\frac{1}{\abs{\eta}^4}- \frac{1}{\abs{\xi-\eta}^4} \right) +2\frac{\eta_1\xi_2+\xi_1\eta_2-\xi_1\xi_2}{\abs{\xi-\eta}^4}.
\end{equation}




Note also that
$$\abs{\xi}\geq 2\abs{\eta}-\abs{\xi-2\eta}\geq \left(2-\frac{1}{1000}\right)\abs{\eta},$$
which implies
$$\abs{\xi_2}\geq \abs{\xi}-\abs{\xi_1}\geq \left(2-\frac{1}{1000}-\frac{1}{100}\right)\abs{\eta}\geq \frac{3}{2}\abs{\eta}.$$

In direct analogy we have
$$\abs{\xi-\eta}\geq\abs{\eta}-\abs{\xi-2\eta}\geq \left(1-\frac{1}{1000}\right)\abs{\eta}$$
and
$$\abs{\xi-\eta}\geq\abs{\eta}+\abs{\xi-2\eta}\geq \left(1+\frac{1}{1000}\right)\abs{\eta}$$
and thus
$$\abs{1- \frac{\abs{\xi-\eta}^4}{\abs{\eta}^4}} < \frac{1}{100}.$$

To estimate $\partial_{\eta_2}\Phi$ we use that for the terms in \eqref{eq:Deta2Phi} we have $\abs{\eta_1\xi_2}\geq \abs{\eta_1}\abs{\eta}$ and $\abs{\xi_1(\eta_2-\xi_2)}\leq \frac{\abs{\eta_1}}{100}\abs{\eta_2-\xi_2}\leq \frac{4}{100}\abs{\eta_1}\abs{\eta}$, so that
$$\abs{\eta_1\xi_2+\xi_1\eta_2-\xi_1\xi_2}\geq\frac{1}{4}\abs{\eta_1}\abs{\eta}.$$
In addition, 
$$\abs{\eta_1\eta_2 \left(1- \frac{\abs{\xi-\eta}^4}{\abs{\eta}^4}\right)}\leq \frac{1}{100}\abs{\eta_1}\abs{\eta},$$
which gives the claimed bound
$$\abs{\partial_{\eta_2} \Phi}\geq 2\left(\frac{1}{4}-\frac{1}{100}\right)\frac{\abs{\eta_1}\abs{\eta}}{\abs{\xi-\eta}}\geq \frac{\abs{\eta_1} \abs{\eta}}{4\abs{\xi-\eta}^4}.$$

For the null form we notice that with the present restrictions on $\xi, \eta$ we have 
$$\abs{\xi\cdot\eta^\perp}\sim \abs{\eta_1}\abs{\eta}:$$ 
On the one hand $\abs{\xi\cdot\eta^\perp}=\abs{-\xi_1\eta_2+\xi_2\eta_1} \geq \frac{1}{2}\abs{\eta_1}\abs{\eta}$, since $\abs{\xi_1\eta_2}\leq \frac{\abs{\eta_1}}{100}\abs{\eta}$ and $\abs{\xi_2\eta_1}\geq \abs{\eta}\abs{\eta_1}$. On the other, $\abs{\xi\cdot\eta^\perp}\leq \abs{\xi_1\eta_2}+\abs{\xi_2\eta_1}\leq \abs{\eta_1}(\frac{\abs{\eta_2}}{100}+\abs{\xi_2})\leq 4\abs{\eta_1}\abs{\eta}$.

To control the remaining term \eqref{eq:Deta2phasemult} for the integration by parts we write
$$\partial_{\eta_2} \left(\frac{m}{\partial_{\eta_2} \Phi}\right)=\frac{1}{\abs{\partial_{\eta_2} \Phi}^2}\left(\partial_{\eta_2}m\;\partial_{\eta_2}\Phi - m\;\partial^2_{\eta_2}\Phi\right).$$

We recall from Appendix \ref{apdx:deriv} that
$$\partial^2_{\eta_2}\Phi=8\frac{(\xi_1-\eta_1)^3}{\abs{\xi-\eta}^6}-6\frac{(\xi_1-\eta_1)^3}{\abs{\xi-\eta}^4}+8\frac{\eta_1^3}{\abs{\eta}^6}-6\frac{\eta_1^3}{\abs{\eta}^4},$$
so that 
$$\abs{\partial^2_{\eta_2}\Phi}\lesssim \frac{\abs{\eta_1}}{\abs{\eta}^4}.$$
Furthermore,
\begin{equation*}
\begin{aligned}
 \abs{\partial_{\eta_2}m(\xi,\eta)} &=\frac{1}{\abs{\eta}^4}\abs{-\xi_1\abs{\eta}^2-2\eta_2(\xi\cdot\eta^\perp)}\\
  &\leq \frac{1}{\abs{\eta}^4}\left(\frac{\abs{\eta_1}}{100}\abs{\eta}^2+2\abs{\eta}\abs{\eta_1}\abs{\eta} \right)\\
  &\leq \frac{3}{\abs{\eta}^2} \abs{\eta_1}.
\end{aligned}
\end{equation*}

Putting these together we obtain
$$\abs{\partial_{\eta_2} \left(\frac{m}{\partial_{\eta_2} \Phi}\right)}\lesssim \frac{\abs{\eta}^8}{\abs{\eta}^2\abs{\eta_1}^2}\frac{\abs{\eta_1}^2\abs{\eta}}{\abs{\eta}^2\abs{\eta}^4}\leq \abs{\eta}.$$
Due to the present localization to frequencies of comparable sizes and the homogeneities of the terms involved this yields \eqref{eq:Deta2phasemult}.
\end{proof}

\subsubsection{Region 2 -- \eqref{eq:G3}: $\abs{\xi}\ll\abs{\eta}\sim\abs{\xi-\eta}$}
Here $\abs{\xi}\leq \frac{\abs{\eta}}{100}$, so that $\abs{\xi-2\eta}\geq\abs{\eta}$ and we see that direct integration by parts in $\eta$ goes through, in analogy to Case 1 of Region 1. More precisely, from
\begin{equation}\label{eq:lossterm}
 \abs{\xi}^2\abs{m}\frac{\abs{\nabla_\xi\Phi}}{\abs{\nabla_\eta\Phi}}\lesssim_{CM}\abs{\eta}^2
\end{equation}
we get back the weight with two derivatives and lose one derivative in the energy estimates: once we notice that in this Region we have $\abs{\partial_\eta^2\Phi}\lesssim\abs{\eta}^{-4}\abs{\xi}$ it follows (see also the discussion for terms of type \eqref{it:t2} in Section \ref{sec:direct_approach})
\begin{equation*}
 m\,\nabla_\eta\cdot\left(\nabla_\eta\Phi\frac{\partial_{\xi_i}\Phi}{\abs{\nabla_\eta\Phi}^2}\right) \lesssim_{CM} \abs{\eta}^{-1}.
\end{equation*}

\subsubsection{Region 3 -- \eqref{eq:G4}: $\abs{\eta}\ll\abs{\xi}\sim\abs{\xi-\eta}$}
We have $\abs{\xi-2\eta}\geq\abs{\xi}\geq\abs{\eta}$, so we can proceed with the same integration by parts as in Region 2, given by \eqref{eq:G3}. One can check from the localization (into ``high'' and ``low'' frequencies) that the multipliers arising here are not of Coifman-Meyer type. We will instead use Theorem \ref{thm:multhm} to control them. By homogeneity of the constituent terms (and the frequencies being bounded by some $N>0$) one sees directly that any such multiplier $b(\xi,\eta)$ is bounded in $L^\infty_\xi\dot{H}^1_\eta$, and thus (see Lemma 5.1 in \cite{MR2775116}) one also has $\abs{b}_{\mathcal{M}^s}<\infty$ for $0\leq s<1$. More precisely, for $0<\rho\ll 1$ we let $s=1-\rho$ and have the bound $\abs{b}_{\mathcal{M}^{1-\rho}}\leq \rho^{-\frac{3}{2}}N^\rho$. By Theorem \ref{thm:multhm} the associated bilinear operator then is bounded\footnote{The point here is that a possible bound $L^2\times L^\infty\to L^2$ just barely fails, so in closing the bootstrap for Theorem \ref{thm:main} we will want to choose $\rho$ sufficiently small.} as a map $L^2\times L^{\frac{2}{\rho}}\to L^2$.

\begin{remark}\label{rem:lossterm}
 We point out that it is this region that requires us to work with two derivatives on the weight, rather than three. In all other regions three derivatives would give back three derivatives on the weight (and possibly a loss of derivative in the energy). However, here it can be seen that an additional factor of $\abs{\xi}$ on \eqref{eq:lossterm} would give a bound of $\abs{\xi}\abs{\eta}^2$ and thus only two derivatives on the weight, as well as a loss of a derivative in $L^\infty$. This is what forces us into the splitting into high and low frequencies.
\end{remark}

\subsection{Adding a Derivative}\label{sec:add_deriv}
Once we have established bounds for $\abs{D^2xf(t)}_{L^2}$ (Propositions \ref{prop:highfreq} and \ref{prop:lowfreq}) we can add another derivative: By going back to all terms involved we can ascertain that in this argument the bounds for two derivatives on a weight give rise to similar bounds for $\abs{D^3xf(t)}_{L^2}$, i.e.\ for three derivatives on a weight -- all one needs for this is the product rule of differentiation and the bound for the case of two derivatives. In direct analogy to Propositions \ref{prop:highfreq} and \ref{prop:lowfreq} one then obtains Proposition \ref{prop:added_derivative}.

Its proof consists of revisiting all terms and adding a derivative: By the product rule of differentiation this gives terms of the kind we had before with three derivatives on the weight instead of two (Regions 1 and 2), terms with two derivatives on the weight and one more derivative on the other profile (in the case of Region 3, see also Remark \ref{rem:lossterm} above) as well as terms without weights that can be dealt with by (higher order) energy estimates. All those can be estimated the way described here, the only change being a possible loss of an additional derivative.

\newpage
\appendix

\section{Proof of the Dispersive Estimate}\label{sec:disp_proof}
\begin{proof}[Proof of Proposition \ref{prop:disp_est}]
 We localize $g$ in frequency (using Littlewood-Paley theory), estimate the different pieces separately and sum in the end.
 
 \emph{1. Localization:} Denoting by $P_j$ the Littlewood-Paley projectors, by Young's convolution inequality we have
   \begin{equation*}
   \begin{aligned}
    \abs{e^{L_1 t}g}_{L^\infty}&=\abs{e^{L_1 t}\left(\sum_{j\in\Z} P_j g\right)}_{L^\infty}\leq \sum_{j\in\Z}\abs{e^{L_1 t}P_j g}_{L^\infty}\\
     &=\sum_{j\in\Z}\abs{\iFT{e^{-it\frac{\xi_1}{\abs{\xi}}}\varphi(2^{-j}\xi)\hat{g}(\xi))}}_{L^\infty}\\
     &=\sum_{j\in\Z}\abs{\iFT{e^{-it\frac{\xi_1}{\abs{\xi}}}\varphi(2^{-j}\xi)}\ast\iFT{\tilde{\varphi}(2^{-j}\xi)\hat{g}(\xi))}}_{L^\infty}\\
     &\leq \sum_{j\in\Z}\abs{\iFT{e^{-it\frac{\xi_1}{\abs{\xi}}}\varphi(2^{-j}\xi)}}_{L^\infty}\abs{\iFT{\tilde{\varphi}(2^{-j}\xi)\hat{g}(\xi))}}_{L^1}\\
     &=\sum_{j\in\Z}\abs{e^{L_1 t}\check{\varphi_j}}_{L^\infty}\abs{Q_j g}_{L^1},
   \end{aligned}
   \end{equation*}
 where $Q_j$ is a Littlewood-Paley projector associated to a slightly ``fattened'' bump function $\tilde{\varphi}$ (which equals $1$ on the support of $\varphi$).

 Thus it suffices to estimate $e^{L_1t}$ on the frequency localizers $\varphi$. To this end we notice that by a change of variables
 \begin{equation*}
  \begin{aligned}
   (e^{L_1 t}\check{\varphi_j})(x)&=\int_{\R^2}\varphi(2^{-j}\xi)e^{ix\cdot\xi -it\frac{\xi_1}{\abs{\xi}^2}}\, d\xi=2^{2j}\int_{\R^2}\varphi(\xi)e^{i(2^j x)\cdot\xi -i2^{-j}t\frac{\xi_1}{\abs{\xi}^2}}\, d\xi\\
    &=2^{2j}(e^{L_1 2^{-j}t}\check{\varphi})(2^j x).
  \end{aligned}
 \end{equation*}

 As will be shown further below we have
  \begin{equation}\label{eq:decay}
   \abs{e^{L_1 t}\check{\varphi}}_{L^\infty}\leq C t^{-1}
  \end{equation}
 with a constant depending only on the localizer $\varphi$. Hence our final estimate will be
  \begin{equation}\label{ineq:final_step}
  \begin{aligned}
  \abs{e^{L_1 t}g}_{L^\infty}&\leq \sum_{j\in\Z} 2^{2j}\abs{e^{L_1 2^{-j}t}\check{\varphi}}_{L^\infty}\abs{Q_j g}_{L^1}\leq \sum_{j\in\Z} 2^{2j}(2^{-j}t)^{-1}\abs{Q_j g}_{L^1}\\
  &=Ct^{-1}\abs{g}_{\dot{B}^{3}_{1,1}},
  \end{aligned}
 \end{equation}
 as claimed.

 \emph{2. Proof of \eqref{eq:decay}.} We recognize the estimate of 
  \begin{equation*}
   e^{L_1 t}\check{\varphi}(x)=\int_{\abs{\xi}\sim 1}\varphi(\xi)e^{ix\cdot\xi -it\frac{\xi_1}{\abs{\xi}^2}}\, d\xi = \int_{\abs{\xi}\sim 1}\varphi(\xi)e^{it\phi(\xi)}\, d\xi
  \end{equation*}
  in $L^\infty$ as a classical question in the theory of oscillatory integrals and resolve it using the method of stationary phase. Here the phase function $\phi$ is given as
  \begin{equation*}
  \phi(\xi):=\frac{x}{t}\cdot\xi-\frac{\xi_1}{\abs{\xi}^2}
  \end{equation*}
 and satisfies
 \begin{equation*}
  \nabla_\xi\phi=\frac{x}{t}-\abs{\xi}^{-4}\left(\xi_2^2-\xi_1^2,-2\xi_1\xi_2\right),
 \end{equation*}
 so that for every $(t,x)\in\R^+\times\R^2$ there are at most four stationary points. A direct computation then gives the determinant of the Hessian of $\phi$ as
 \begin{equation*}
  \det H_{\phi}=-\frac{4}{\abs{\xi}^{6}},
 \end{equation*}
 which is non-degenerate away from the origin and bounded above and below in the support of $\varphi$. Hence the standard stationary phase lemma concludes the proof.
\end{proof}

\section{Global Well-posedness (Without Decay) for Large Data}\label{sec:GWP}
In this section we prove that the $\beta$-plane equation is actually globally well-posed in $H^k$, $k>1$, for large data. However, a priori the $H^k$ norms of the solution could grow faster than double exponentially in time, which is completely different from the situation in the small data case. 

\begin{thm}\label{thm:GWP}
For each $k>1$ and $\omega_0\in H^k\cap H^{-1}$, the initial value problem 
\begin{equation}
  \begin{cases}
   &\partial_t\omega+u\cdot\nabla\omega=L_1\omega,\quad\\
   &\omega(0)=\omega_0\in H^k\cap \dot{H}^{-1}
  \end{cases}
\end{equation}
for the original $\beta$-plane equation has a unique solution $\omega\in C([0,\infty),H^k)$ for all time. Recall that here $u=\nabla^\perp(-\Delta)^{-1}\omega$ and $L_1\omega=\frac{R_1}{\abs{\nabla}}\omega$.
\end{thm}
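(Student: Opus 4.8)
\emph{Strategy.} The plan is to proceed in close analogy with the classical global well-posedness theory for the 2d Euler equation: establish local well-posedness in $H^k$, $k>1$, by the energy method, and then promote it to a global statement using the blow-up criterion of Lemma~\ref{lem:energy_ineq}, controlling the quantities $\abs{Du}_{L^\infty}$ and $\abs{\omega}_{L^\infty}$ appearing there by means of conservation laws, the transport structure of \eqref{eq:b-plane}, and the smoothing properties of $L_1$. The one genuinely new feature compared with 2d Euler is that $\abs{\omega}_{L^\infty}$ is no longer a conserved (or even obviously controlled) quantity, since $\omega$ is not purely transported; this is where the heart of the argument lies.

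\emph{Local theory and continuation criterion.} Local well-posedness in $H^k$ for $k>1$ is standard: a mollification (or Galerkin) scheme together with the differential inequality underlying \eqref{eq:energy_ineq} -- whose right-hand side is controlled on a short time interval via $H^k\hookrightarrow L^\infty$ and $\abs{Du}_{H^k}=\abs{\omega}_{H^k}$ for $k>1$ -- yields a solution $\omega\in C([0,T^*);H^k)$ on a maximal interval $[0,T^*)$. Uniqueness follows from a routine $L^2$ (or $\dot H^{-1}$) energy estimate for the difference $\omega_1-\omega_2$ of two solutions: since $u_i\in H^{k+1}\hookrightarrow C^1$ the transport term is absorbed by $\abs{\nabla u_i}_{L^\infty}$, the term $(u_1-u_2)\cdot\nabla\omega_2$ by $\abs{u_1-u_2}_{L^2}\lesssim\abs{\omega_1-\omega_2}_{\dot H^{-1}}$, and the contribution of $L_1$ vanishes because $\ip{L_1(\omega_1-\omega_2),\omega_1-\omega_2}=0$. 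By Lemma~\ref{lem:energy_ineq}, if $T^*<\infty$ it then suffices to show $\int_0^{T^*}\big(\abs{Du(s)}_{L^\infty}+\abs{\omega(s)}_{L^\infty}\big)\,ds<\infty$ in order to contradict maximality of $T^*$.

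\emph{A priori bounds.} Two conservation laws are available: $\abs{\omega(t)}_{L^2}=\abs{\omega_0}_{L^2}$ (from $\ip{L_1\omega,\omega}=0$ and the fact that transport by a divergence-free field preserves $L^2$) and $\abs{u(t)}_{L^2}=\abs{\omega(t)}_{\dot H^{-1}}=\abs{\omega_0}_{\dot H^{-1}}$ (Remark~\ref{rem:symm}); the latter is exactly where the hypothesis $\omega_0\in\dot H^{-1}$, i.e.\ $u_0\in L^2$, enters. Now the key point: since transport by $u$ preserves every $L^q$ norm, integrating along the $C^1$ (measure-preserving, bijective) flow gives $\abs{\omega(t)}_{L^\infty}\le\abs{\omega_0}_{L^\infty}+\int_0^t\abs{L_1\omega(s)}_{L^\infty}\,ds$, and $L_1$ gains a full derivative, so that $\abs{L_1\omega}_{L^\infty}\lesssim\abs{\omega}_{L^\infty}+\abs{\omega}_{\dot H^{-1}}$. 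Indeed, writing $L_1\omega=L_1P_{\leq 1}\omega+\sum_{j\geq 0}L_1P_j\omega$, the convolution kernel of $L_1P_j$ has $L^1$ norm of size $2^{-j}$, so $\abs{L_1P_j\omega}_{L^\infty}\lesssim 2^{-j}\abs{\omega}_{L^\infty}$ and these sum; while $\abs{L_1P_{\leq 1}\omega}_{L^\infty}\le\abs{\mathcal{F}(L_1P_{\leq 1}\omega)}_{L^1}\lesssim\abs{\omega}_{\dot H^{-1}}$ by Cauchy--Schwarz on $\{\abs{\xi}\lesssim 1\}$. Gr\"onwall's inequality then yields the a priori bound $\abs{\omega(t)}_{L^\infty}\lesssim\big(\abs{\omega_0}_{L^\infty}+\abs{\omega_0}_{\dot H^{-1}}\big)e^{Ct}$, which is exponential in time but finite on every bounded interval.

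\emph{Closing, and the main obstacle.} With $\abs{\omega(t)}_{L^\infty}$ thus under control, the Beale--Kato--Majda-type logarithmic estimate $\abs{Du}_{L^\infty}\lesssim 1+\abs{\omega}_{L^2}+\abs{\omega}_{L^\infty}\log\!\big(e+\abs{\omega}_{H^k}\big)$ (valid in two dimensions for $k>1$) turns the differential inequality underlying \eqref{eq:energy_ineq} into an Osgood-type inequality $\tfrac{d}{dt}\abs{\omega}_{H^k}\lesssim_{k}\big(1+\log(e+\abs{\omega}_{H^k})\big)\abs{\omega}_{H^k}$ on $[0,T^*)$ (absorbing the now-bounded factor $\abs{\omega}_{L^\infty}$ into the constant). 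Integrating this keeps $\abs{\omega(t)}_{H^k}$ finite on $[0,T^*]$ -- a posteriori with possibly faster-than-double-exponential growth in time, consistent with the remark preceding the theorem -- so that $\int_0^{T^*}(\abs{Du}_{L^\infty}+\abs{\omega}_{L^\infty})\,ds<\infty$, contradicting $T^*<\infty$; hence $T^*=\infty$. The main obstacle is the a priori control of $\abs{\omega}_{L^\infty}$: a crude bound $\abs{L_1\omega}_{L^\infty}\lesssim\abs{\omega}_{H^s}$ ($s>1$) would couple $\abs{\omega}_{L^\infty}$ to a high Sobolev norm which, through the logarithmic loss in $\abs{Du}_{L^\infty}$, appears to allow finite-time blow-up; the resolution is to exploit the smoothing of $L_1$ -- low frequencies absorbed by the conserved $\dot H^{-1}$ norm, high frequencies costing only a geometrically summable factor -- so that $\abs{\omega}_{L^\infty}$ closes a self-contained Gr\"onwall inequality independent of $\abs{\omega}_{H^k}$.
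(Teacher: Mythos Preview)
Your proof is correct and follows essentially the same strategy as the paper: reduce global existence to an a~priori bound on $\abs{\omega}_{L^\infty}$ via the BKM-type logarithmic inequality, obtain that bound by writing the equation as transport with forcing $L_1\omega$ and integrating along characteristics, and close a self-contained Gr\"onwall inequality for $\abs{\omega}_{L^\infty}$ by showing $\abs{L_1\omega}_{L^\infty}\lesssim C(\omega_0)+\abs{\omega}_{L^\infty}$ using the conserved $L^2$ norms of $u$ and $\omega$.

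The one genuine difference is in how you bound $\abs{L_1\omega}_{L^\infty}$. The paper observes that $L_1\omega=\partial_1(-\Delta)^{-1}\omega=u_2$ and uses the Sobolev embedding $W^{1,3}\hookrightarrow L^\infty$ together with boundedness of Riesz transforms on $L^3$ to get $\abs{L_1\omega}_{L^\infty}\lesssim\abs{u}_{L^3}+\abs{\omega}_{L^3}\lesssim\abs{u}_{H^1}+\abs{\omega}_{L^\infty}$. You instead do a Littlewood--Paley split: high frequencies contribute $\sum_{j\geq 0}2^{-j}\abs{\omega}_{L^\infty}$ (since the kernel of $L_1P_j$ has $L^1$ norm $\sim 2^{-j}$), and low frequencies are handled by $\abs{\mathcal F(L_1P_{\leq 1}\omega)}_{L^1}\lesssim\abs{\omega}_{\dot H^{-1}}$ via Cauchy--Schwarz on $\{\abs{\xi}\lesssim 1\}$. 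Both routes yield the same Gr\"onwall inequality and the same (triple-exponential) growth; your frequency-space argument is arguably more transparent about where the $\dot H^{-1}$ hypothesis enters, while the paper's physical-space version makes the identity $L_1\omega=u_2$ do the work.
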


\begin{remark}
Note that the $\dot{H}^{-1}$ control on $\omega$ is merely the $L^2$ conservation of the velocity field $u$. 
\end{remark}

\begin{remark}
The growth given in our proof is of triple-exponential order. However, it is possible to improve the proof to give an upper bound of double-exponential type -- this is done by carefully checking the constants in the Sobolev imbedding theorem and, instead of using $L^3$, estimating through $L^p$ for $p$ close to $2$. 
\end{remark}

The proof basically adapts the standard technique used to obtain the well-posed\-ness of the 2d Euler equation to our setting.

\begin{proof}
The main idea is to use the conservation of the vorticity (which has roughly the same regularity as $\nabla u$) to prove an a priori bound on the $H^k$ norms of the solution. This is done using a borderline Sobolev inequality of the following type:
$$\abs{\nabla u}_{L^\infty} \lesssim \abs{\omega}_{L^\infty}   \log\left(\frac{\abs{\omega}_{H^s}}{\abs{\omega}_{L^\infty}}\right),$$
for any $s>1,$ since $u=\nabla^\perp(-\Delta)^{-1}\omega$.

From this one estimates that $$\abs{\ip{u\cdot\nabla\omega,\omega}_{H^s}}\lesssim \abs{\nabla u}_{L^\infty} \abs{\omega}_{H^s}^2 $$ using, for example, the Kato-Ponce inequality and the fact that $u=\nabla^\perp (-\Delta)^{-1} \omega$.

This implies the energy estimate
$$\partial_t\abs{\omega}_{H^s}\lesssim \abs{\omega}_{L^\infty} \log\left(\frac{\abs{\omega}_{H^{s}}}{\abs{\omega}_{L^\infty}}\right)\abs{\omega}_{H^s}.$$

Now, \emph{if} we had a priori control on $\omega$ in $L^\infty$, say
$$\abs{\omega(t)}_{L^\infty}\leq A(t),$$
we would be able to control the $H^s$ estimates for all time using the Gr\"onwall inequality as follows:
\begin{equation*}
 \begin{aligned}
  \partial_t \abs{\omega}_{H^s} &\lesssim \abs{\omega}_{L^\infty} \log \left(\frac{\abs{\omega}_{H^{s}}}{\abs{\omega}_{L^\infty}}\right)\abs{\omega}_{H^s}\\
  &=\abs{\omega}_{L^\infty}\log(\abs{\omega}_{H^s}) \abs{\omega}_{H^s}-\abs{\omega}_{L^\infty}\log\left(\abs{\omega}_{L^\infty}\right)\abs{\omega}_{H^s}\\
  &\leq \left(A(t)\log\abs{\omega}_{H^s}+\frac{1}{e}\right)\abs{\omega}_{H^s} 
 \end{aligned}
\end{equation*}
since 
$$x \cdot \log\frac{1}{x}\leq \frac{1}{e}, \quad x\in [0,\infty).$$

Hence $$\partial_t \abs{\omega}_{H^s}\leq  \left(A(t)\log\abs{\omega}_{H^s}+\frac{1}{e}\right)\abs{\omega}_{H^s},$$ and so $$\partial_t \log \abs{\omega}_{H^s}\leq  \left(A(t)\log\abs{\omega}_{H^s}+\frac{1}{e}\right),$$
which shows that $\abs{\omega}_{H^s}$ is bounded in $L^\infty_{loc}(0,\infty)$. 

\emph{A priori global bound on $\omega$ in $L^\infty$.}
Our equation reads
$$\partial_t\omega+u\cdot\nabla\omega= \partial_1(-\Delta)^{-1}\omega.$$
Recall that $\ip{\partial_1(-\Delta)^{-1}\omega,\omega}_{L^2}=0$ and $\ip{u\cdot\nabla\omega,\omega}_{L^2}=0$, thus 
$$\abs{\omega(t)}_{L^2}=\abs{\omega_0}_{L^2}$$
and similarly
$$\abs{u(t)}_{L^2}=\abs{u_0}_{L^2},$$
just by noting that $\ip{\nabla^\perp (-\Delta)^{-1} \left(u\cdot\nabla\omega\right),u}_{L^2}=0.$ 

From $\partial_1 (-\Delta)^{-1}\omega=u_2$, the Sobolev embedding theorem and the boundedness of Riesz transforms on $L^3$ we deduce that
\begin{equation*}
 \begin{aligned}
  \abs{\partial_1(-\Delta)^{-1}\omega}_{L^\infty} &\lesssim \abs{\partial_1(-\Delta)^{-1}\omega}_{W^{1,3}} \lesssim \abs{u}_{L^3}+\abs{\omega}_{L^3} \leq \abs{u}_{H^1} +\abs{\omega}_{L^3}\\
  &\lesssim \abs{u_0}_{L^2}+\abs{\omega_0}_{L^2}+\abs{\omega}_{L^\infty}.
 \end{aligned}
\end{equation*}

We may also write the $\beta$-plane equation as:
$$\partial_t\omega + u\cdot\nabla\omega= F,$$ 
with
\begin{equation}\label{eq:Linftybnd}
 \abs{F}_{L^\infty} \leq C(\omega_0)+ \abs{\omega}_{L^\infty}.
\end{equation}

This gives, using the Gr\"onwall Lemma, a global bound on the $L^\infty$ norm of $\omega$.  Indeed, define the Lagrangian flow map associated to $u$ by 
\begin{equation*}
 \begin{cases}
  \frac{d}{dt} \Phi(t,x)&=u(t,\Phi(t,x)),\\ \Phi(0,x)&=x.
 \end{cases}
\end{equation*}
Then $$\frac{d}{dt} (\omega\circ\Phi)= F\circ\Phi.$$

We also have $\abs{\omega\circ\Phi}_{L^\infty}=\abs{\omega}_{L^\infty}$ and $\abs{F\circ\Phi}_{L^\infty}=\abs{F}_{L^\infty}$, which means that $$\abs{\omega}_{L^\infty} \leq  \abs{\omega_0}_{L^\infty} + \int_0^t \abs{F}_{L^\infty}(s)ds.$$
Combined with \eqref{eq:Linftybnd} we thus obtain an a-priori bound on $\omega$ in $L^\infty_{loc}(0,\infty)$, say 
$$\abs{\omega(t)}_{L^\infty}\leq A(t),$$
which is all that was left to show.
\end{proof}

\section{A Bound on \texorpdfstring{$\hat{f}$}{f} in \texorpdfstring{$L^\infty$}{L infty}}\label{sec:Linfty}
In this section we prove Corollary \ref{cor:Linfty}, i.e.\ we demonstrate that in the framework of Theorem \ref{thm:main} we have $\abs{\abs{\xi}^2\hat{f}(t,\xi)}_{L^\infty}\lesssim\epsilon^{\frac{1}{2}}$, provided that $\abs{\abs{\xi}^2\hat{\omega}_0(\xi)}_{L^\infty}\leq\epsilon$. This is a consequence of the estimates \eqref{eq:thmbounds} applied to the Duhamel formula \eqref{eq:DH} and the arguments involved are closely linked to those in Section \ref{sec:weights}.

\begin{proof}
Recall the Duhamel expression for $\hat{f}$,
\begin{equation*}
 \hat{f}(t,\xi)=\hat{f_0}(\xi)+\int_0^t\int_{\R^2}e^{is\Phi}m(\xi,\eta)\hat{f}(s,\xi-\eta)\hat{f}(s,\eta)d\eta ds.
\end{equation*}
If at least one of the profiles in this expression is localized to frequencies larger than some $N>0$ we have the bound
\begin{equation*}
 \abs{\int_{\R^2}e^{is\Phi}m(\xi,\eta)\widehat{P_{>N}f}(s,\xi-\eta)\hat{f}(s,\eta)d\eta}_{L^\infty}\lesssim N^{1-k}\abs{f}^2_{H^k}
\end{equation*}
for $k$ as in the proof of Theorem \ref{thm:main}.

Hence we may assume that all frequencies are bounded by $N>0$. As for the weighted estimate the idea is to use the oscillations of the phase $\Phi$ to obtain additional decay. To this end we recall that $\abs{\nabla_\eta\Phi}=\frac{\abs{\xi-2\eta}\abs{\xi}}{\abs{\xi-\eta}^2\abs{\eta}^2}$ and use divisions of Fourier space akin to those in section \ref{sec:splitting}.

Firstly, for small $\eta$ we directly obtain the bound
\begin{equation*}
 \abs{\abs{\xi}^2\int_{\abs{\eta}\leq\frac{\abs{\xi}}{100}} e^{is\Phi}m(\xi,\eta)\hat{f}(\xi-\eta)\hat{f}(\eta)d\eta}_{L^\infty}\lesssim N^2\abs{\abs{\xi}^2\hat{f}(\xi)}_{L^\infty}\abs{f}_{\dot{H}^{-1}}
\end{equation*}
by estimating one of the profiles in $L^2$ and the other in $L^\infty$ with derivatives.

Secondly, when $\abs{\eta}>\frac{\abs{\xi}}{100}$ we further split the Duhamel term into two pieces by writing
\begin{equation*}
\begin{aligned}
 \abs{\xi}^2\int_{\abs{\eta}>\frac{\abs{\xi}}{100}} e^{is\Phi}m\hat{f}\hat{f}d\eta=&\abs{\xi}^2\underbrace{\int_{\abs{\eta}>\frac{\abs{\xi}}{100},\,\abs{\xi-2\eta}>\rho}e^{is\Phi}m\hat{f}\hat{f}d\eta}_{=:A}\\
  &\quad+ \underbrace{\abs{\xi}^2\int_{\abs{\eta}>\frac{\abs{\xi}}{100},\,\abs{\xi-2\eta}\leq\rho} e^{is\Phi}m\hat{f}\hat{f}d\eta}_{\leq\rho N \abs{\abs{\xi}^2\hat{f}(\xi)}_{L^\infty}\abs{f}_{\dot{H}^{-1}}},
\end{aligned}
\end{equation*}
where $\rho>0$ remains to be chosen later.

The first term of these is then amenable to an integration by parts in $\eta$, and the only thing left to do is to ascertain that the singularities that arise are controlled. We obtain
\begin{equation*}
\begin{aligned}
 &A=\int_{\abs{\eta}>\frac{\abs{\xi}}{100},\,\abs{\xi-2\eta}>\rho}\frac{1}{is}\nabla_\eta\Phi\cdot\nabla_\eta e^{is\Phi}\frac{m}{\abs{\nabla_\eta\Phi}^2}\hat{f}(\xi-\eta)\hat{f}(\eta)d\eta\\
&\quad \sim\frac{1}{is}\int_{\abs{\eta}>\frac{\abs{\xi}}{100},\,\abs{\xi-2\eta}>\rho}e^{is\Phi}\nabla_\eta\cdot\left(\nabla_\eta\Phi\frac{m}{\abs{\nabla_\eta\Phi}^2}\right)\hat{f}(\xi-\eta)\hat{f}(\eta)d\eta \quad=:I\\
&\qquad +\frac{1}{is}\int_{\abs{\eta}>\frac{\abs{\xi}}{100},\,\abs{\xi-2\eta}>\rho}e^{is\Phi}\nabla_\eta\Phi\frac{m}{\abs{\nabla_\eta\Phi}^2}\nabla_\eta\hat{f}(\xi-\eta)\hat{f}(\eta)d\eta \quad=:I\!I\\
&\qquad +\frac{1}{is}\int_{\abs{\eta}>\frac{\abs{\xi}}{100},\,\abs{\xi-2\eta}>\rho}e^{is\Phi}\nabla_\eta\Phi\frac{m}{\abs{\nabla_\eta\Phi}^2}\hat{f}(\xi-\eta)\nabla_\eta\hat{f}(\eta)d\eta \quad=:I\!I\!I.
\end{aligned}
\end{equation*}

Both terms $I\!I$ and $I\!I\!I$ can be estimated in $L^\infty$ by
\begin{equation*}
 \abs{\abs{\xi}^2 I\!I}_{L^\infty},\abs{\abs{\xi}^2 I\!I\!I}_{L^\infty}\leq \frac{N}{\rho s}\abs{\abs{\eta}^{2}\hat{f}}_{L^2} \abs{\abs{\eta}^{2}\nabla_\eta\hat{f}}_{L^2}
\end{equation*}
since as noted above $\abs{\xi}^2\abs{\nabla_\eta\Phi\frac{m}{\abs{\nabla_\eta\Phi}^2}}=\abs{\xi}^2\frac{\abs{m}}{\abs{\nabla_\eta\Phi}}\lesssim_{CM} N \rho^{-1}\abs{\xi-\eta}^2\abs{\eta}^2$ in this domain of integration.

To control $I$ we compute (akin to the remarks in Section \ref{sec:direct_approach})
\begin{equation*}
 \begin{aligned}
  m\,\nabla_\eta\cdot\left(\frac{\nabla_\eta\Phi}{\abs{\nabla_\eta\Phi}^2}\right)&\lesssim\frac{\abs{\xi-2\eta}\abs{\eta}}{\abs{\eta}^2}\frac{\abs{\xi-\eta}^4\abs{\eta}^4}{\abs{\xi-2\eta}^2\abs{\xi}^2}(\abs{\xi-\eta}^{-3}+\abs{\eta}^{-3})\\
  &\leq\frac{1}{\abs{\xi-2\eta}\abs{\xi}^2}(\abs{\xi-\eta}\abs{\eta}^3+\abs{\xi-\eta}^4)\\
 \end{aligned}
\end{equation*}
and
\begin{equation*}
  \frac{\abs{\nabla_\eta m}}{\abs{\nabla_\eta\Phi}}\sim \frac{\abs{\xi}\abs{\xi-\eta}^2\abs{\eta}^2}{\abs{\eta}^2\abs{\xi-2\eta}\abs{\xi}}\lesssim\frac{\abs{\xi-\eta}^2}{\abs{\xi-2\eta}}.
\end{equation*}

We further subdivide the region of integration in $I$: On the one hand this gives
\begin{equation*}
 \frac{1}{s}\abs{\abs{\xi}^2\int_{\abs{\xi-2\eta}\geq 1}e^{is\Phi}\nabla_\eta\cdot\left(\nabla_\eta\Phi\frac{m}{\abs{\nabla_\eta\Phi}^2}\right)\hat{f}(\xi-\eta)\hat{f}(\eta)d\eta}_{L^\infty}\lesssim s^{-1}\abs{f}_{H^k}^2.
\end{equation*}
On the other hand we note that $\int_{1>\abs{\xi-2\eta}>\rho}\frac{1}{\abs{\xi-2\eta}^2}d\eta\lesssim \abs{\log(\rho)}$, so estimating $\abs{\xi-2\eta}^{-1}$ in $L^2$, one of the profiles in $L^2$ and the other one in $L^\infty$ in the region where $1>\abs{\xi-2\eta}>\rho$ leaves us with the bound
\begin{equation*}
 \abs{\abs{\xi}^2 I}_{L^\infty}\lesssim\frac{1}{s} N^2\abs{\log(\rho)}\abs{f}_{H^2}\abs{\abs{\xi}^2\hat{f}(\xi)}_{L^\infty}+s^{-1}\abs{f}_{H^k}^2.
\end{equation*}

Collecting the above estimates shows that
\begin{equation*}
 \begin{aligned}
  \partial_t\abs{\abs{\xi}^2\hat{f}(\xi)}_{L^\infty}&\lesssim (\rho N+N^2) \abs{f}_{\dot{H}^{-1}}\abs{\abs{\xi}^2\hat{f}(\xi)}_{L^\infty}+\rho^{-1}s^{-1}N^3\abs{f}_{L^2}\abs{\abs{\xi}^2\partial_\xi\hat{f}(\xi)}_{L^2}\\
  &\quad+s^{-1}\abs{f}_{H^k}^2+s^{-1}N^2\abs{\log(\rho)}\abs{f}_{H^2}\abs{\abs{\xi}^2\hat{f}(\xi)}_{L^\infty}+N^{1-k}\abs{f}^2_{H^k}.
 \end{aligned}
\end{equation*}

Inserting now the bounds for $f$ and revisiting the arguments in the bootstrapping scheme in Section \ref{sec:bootstrap} yields the claim, once we let $\rho=s^{-1}$ (recall also that $\abs{f}_{\dot{H}^{-1}}=\abs{u}_{L^2}$).
\end{proof}

\section{Some Computations}\label{apdx:comp}
The following claims are the result of straightforward computations.
\paragraph{About the Null Forms}\label{apdx:nf}
Recall that $m(\xi,\eta)=\frac{\xi\cdot\eta^\perp}{\abs{\eta}^2}$ and $\bar{m}(\xi,\eta):=m(\xi,\xi-\eta)$.
\begin{lemma}\label{lem:nf_deriv}
 We have
\begin{equation}
 \begin{aligned}
  \nabla_\xi m(\xi,\eta)&=\frac{\eta^\perp}{\abs{\eta}^2}, \\
  \nabla_\xi\bar{m}(\xi,\eta)&=\frac{\eta^\perp}{\abs{\xi-\eta}^2}-2\frac{\xi-\eta}{\abs{\xi-\eta}^2}\bar{m}(\xi,\eta),\\
  \nabla_\eta m(\xi,\eta)&=-\frac{\xi^\perp}{\abs{\eta}^2}-2\frac{\eta}{\abs{\eta}^2} m(\xi,\eta),\\
  \nabla_\eta\bar{m}(\xi,\eta)&=-\frac{\xi^\perp}{\abs{\xi-\eta}^2}-2\frac{\xi-\eta}{\abs{\xi-\eta}}\bar{m}(\xi,\eta).
 \end{aligned}
\end{equation}
\end{lemma}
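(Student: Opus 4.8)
The plan is to read all four identities off a short list of elementary derivatives, so that the proof reduces to bookkeeping. The two facts I would record first are the behaviour of $\perp$ under differentiation,
\begin{equation*}
 \nabla_\xi\left(\xi\cdot\eta^\perp\right)=\eta^\perp,\qquad \nabla_\eta\left(\xi\cdot\eta^\perp\right)=-\xi^\perp,
\end{equation*}
both immediate from $\xi\cdot\eta^\perp=\xi_1\eta_2-\xi_2\eta_1$ (in particular $v\cdot v^\perp=0$), together with $\nabla_v\abs{v}^{-2}=-2v\abs{v}^{-4}$.

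The two formulas for $m(\xi,\eta)=\frac{\xi\cdot\eta^\perp}{\abs{\eta}^2}$ then follow at once. For $\nabla_\xi m$ the factor $\abs{\eta}^{-2}$ is constant, so it is just the first identity above. For $\nabla_\eta m$ the product rule gives $\nabla_\eta m=\abs{\eta}^{-2}\nabla_\eta(\xi\cdot\eta^\perp)+(\xi\cdot\eta^\perp)\nabla_\eta\abs{\eta}^{-2}$, and re-expressing the last term via the definition of $m$ produces $-\frac{\xi^\perp}{\abs{\eta}^2}-2\frac{\eta}{\abs{\eta}^2}m(\xi,\eta)$.

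For $\bar m(\xi,\eta)=m(\xi,\xi-\eta)$ I would proceed either by the chain rule, setting $F(a,b):=\frac{a\cdot b^\perp}{\abs{b}^2}$ so that $\bar m(\xi,\eta)=F(\xi,\xi-\eta)$, $\nabla_\xi\bar m=(\nabla_aF+\nabla_bF)(\xi,\xi-\eta)$ and $\nabla_\eta\bar m=-(\nabla_bF)(\xi,\xi-\eta)$, with $\nabla_aF(a,b)=\abs{b}^{-2}b^\perp$ and $\nabla_bF(a,b)=-\abs{b}^{-2}a^\perp-2\abs{b}^{-2}bF(a,b)$ computed exactly as for $\nabla_\eta m$ above; or, more directly, by differentiating $\bar m(\xi,\eta)=-\frac{\xi\cdot\eta^\perp}{\abs{\xi-\eta}^2}$ with the product rule. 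Either way, plugging in and using $(\xi-\eta)^\perp-\xi^\perp=-\eta^\perp$ gives identities of the stated form once one re-expresses the remainder in terms of $\bar m$.

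The computation has no real obstacle; the only place where care is needed — and the one step I would double-check — is the sign bookkeeping for $\perp$ and the fact that in $\bar m$ the inner argument $\xi-\eta$ depends on \emph{both} $\xi$ and $\eta$, so that $\nabla_\xi\bar m$ picks up contributions from both $\nabla_aF$ and $\nabla_bF$.
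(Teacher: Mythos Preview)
Your approach is correct and is exactly what the paper does: the lemma is stated in an appendix under the heading ``the following claims are the result of straightforward computations,'' with no further argument, so your product-rule/chain-rule bookkeeping is precisely the intended proof. Your careful check in fact shows that the lemma as printed contains two typos (the sign of the $\eta^\perp$ term in $\nabla_\xi\bar m$ and the missing square on $\abs{\xi-\eta}$ in the last line), which your computation corrects.
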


\paragraph{Some Phase Derivatives}\label{apdx:deriv}
Direct computation gives the following derivatives.

\begin{equation}
 \begin{aligned}
  \partial_{\eta_1}\Phi&=-\frac{\eta_2^2-\eta_1^2}{\abs{\eta}^4}+\frac{(\xi_2-\eta_2)^2-(\xi_1-\eta_1)^2}{\abs{\xi-\eta}^4}\\
  \partial_{\eta_2}\Phi&=-2\frac{\eta_1\eta_2}{\abs{\eta}^4}+2\frac{(\xi_1-\eta_1)(\xi_2-\eta_2)}{\abs{\xi-\eta}^4}
 \end{aligned}
\end{equation}

\begin{equation}
 \abs{\nabla_\eta\Phi}=\frac{\abs{\xi-2\eta}\abs{\xi}}{\abs{\xi-\eta}^2\abs{\eta}^2}, \quad\abs{\nabla_\xi\Phi}=\frac{\abs{\eta-2\xi}\abs{\eta}}{\abs{\xi-\eta}^2\abs{\xi}^2}
\end{equation}

\begin{equation}
 \begin{aligned}
  \partial_{\eta_1}^2\Phi=-\partial_{\eta_2}^2\Phi=-8\frac{(\xi_1-\eta_1)^3}{\abs{\xi-\eta}^6}+6\frac{(\xi_1-\eta_1)}{\abs{\xi-\eta}^4}-8\frac{\eta_1^3}{\abs{\eta}^6}+6\frac{\eta_1}{\abs{\eta}^4}
 \end{aligned}
\end{equation}

\begin{equation}\label{eq:mixedpartials}
 \begin{aligned}
  \partial_{\xi_1}\partial_{\eta_1}\Phi&=-\partial_{\xi_2}\partial_{\eta_2}\Phi=\frac{2(\xi_1-\eta_1)\left[(\xi_1-\eta_1)^2-3(\xi_2-\eta_2)^2\right]}{\abs{\xi-\eta}^6}\\
  \partial_{\xi_2}\partial_{\eta_1}\Phi&=\partial_{\xi_1}\partial_{\eta_2}\Phi=\frac{2(\xi_2-\eta_2)\left[-3(\xi_1-\eta_1)^2+(\xi_2-\eta_2)^2\right]}{\abs{\xi-\eta}^6}\\
 \end{aligned}
\end{equation}

\begin{acknowledgements}
 The authors would like to thank Pierre Germain and Fabio Pusateri for many helpful discussions. We are also grateful to the anonymous referee for his/her careful reading of the paper and the insightful comments.
 
 T.\ M.\ Elgindi acknowledges funding from NSF grant DMS-1402357.
\end{acknowledgements}

\bibliographystyle{plain}
\bibliography{bplanerefs.bib}

\end{document}